\font\tencyr=wncyr10 \def\russe{\tencyr\cyracc} 
\def\Sha{\text{\russe{Sh}}}
\newtheorem{theorem}{Theorem}
\newtheorem{definition}[theorem]{Definition}
\newtheorem{proposition}[theorem]{Proposition}
\newtheorem{lemma}[theorem]{Lemma}
\newtheorem{corollary}[theorem]{Corollary}
\newtheorem{conjecture}{Conjecture}
\renewcommand{\geq}{\geqslant}
\renewcommand{\leq}{\leqslant}
\newcommand{\Q}{\mathbb Q}
\newcommand{\N}{\mathbb N}
\newcommand{\F}{\mathbb F}
\newcommand{\Z}{\mathbb Z}
\newcommand{\ssss}{\scriptscriptstyle}
\newcommand{\sss}{\scriptstyle}
\newcommand\qbi[3]{{{#1}\atopwithdelims[]{#2}}_{#3}}
\newcommand\bi[2]{{{#1}\atopwithdelims(){#2}}}
\newcommand{\la}{\lambda}
\DeclareMathOperator{\auts}{Aut^{\ssss {\it S}}}
\DeclareMathOperator{\aut}{Aut}
\DeclareMathOperator{\uprob}{{\it u}-prob}
\DeclareMathOperator{\m}{M_{\sss {\it u}}}
\DeclareMathOperator{\mzero}{M_{\sss {0}}}
\DeclareMathOperator{\mun}{M_{\sss { 1}}}
\DeclareMathOperator{\ms}{M_{\sss {\it u}}^{\ssss {\it S}}}
\DeclareMathOperator{\mszero}{M_{\sss {0}}^{\ssss {\it S}}}
\DeclareMathOperator{\msun}{M_{\sss {1}}^{\ssss {\it S}}}
\DeclareMathOperator{\cl}{C\ell}
\DeclareMathOperator{\rk}{rk}
\DeclareMathOperator{\hominj}{Hom_{inj}} 
\DeclareMathOperator{\sel}{Sel} 
\begin{document}  
    
\title{$p^\ell$-torsion points in finite abelian groups and combinatorial identities}

\author{Christophe Delaunay}
\address{Universit\'e de Franche-Comt\'e \\
Laboratoire de Math\'ematiques de Besan\c con, CNRS UMR UMR 6623 \\
Facult\'es des Sciences et Techniques \\ 
16 route de Gray \\
25030 Besan\c con  \\
France.}
\email{christophe.delaunay@univ-fcomte.fr}
\author{Fr\'ed\'eric Jouhet}
\address{Universit\'e de Lyon \\
CNRS  \\
Universit\'e Lyon 1 \\
Institut Camille Jordan \\
43, boulevard du 11 novembre 1918 \\
F-69622 Villeurbanne Cedex \\
France.}
\email{jouhet@math.univ-lyon1.fr}
\date{\today}

\begin{abstract}
The main aim of this article is to compute all the moments of the number of $p^\ell$-torsion elements in some type of finite abelian groups. The averages involved in these moments are those defined for the Cohen-Lenstra heuristics for class groups and their adaptation for Tate-Shafarevich groups. In particular, we prove that the heuristic model for Tate-Shafarevich groups is compatible with the recent conjecture of Poonen and Rains about the moments of the orders of $p$-Selmer groups of elliptic curves. For our purpose, we are led to define  certain polynomials indexed by integer partitions and to study them in a combinatorial way. Moreover, from our probabilistic model, we derive combinatorial identities, some of which appearing to be new, the others being related to the theory of symmetric functions. In some sense, our method therefore gives for these identities a somehow natural algebraic context.
\end{abstract}

\maketitle

%
%

\section{Introduction}

In this work, we compute  the averages, in some sense, of a large class of functions defined over some families of finite abelian 
groups. Theses averages come from the well-known Cohen-Lenstra heuristics model for understanding the behavior  of class 
groups of number fields and their adaptation for Tate-Shafarevich\footnote{Note that the original heuristic assumption for Tate-Shafarevich groups 
in \cite{delaunay1} 
must be modified a little bit; this correction is explained in section~\ref{heur_sha}.} groups of elliptic curves 
(see \cite{cohen-lenstra, cohen-martinet, delaunay1,delaunay2}). In particular, our computations lead to 
a conjectural formula for all the moments of the number of $n$-torsion points in Tate-Shafarevich groups. We prove that our
prediction is consistent  with the recent model of Poonen and Rains that led them to conjecture a formula for all the moments of the 
number of $n$-torsion points in the Selmer groups of elliptic curves if $n$ is squarefree (see \cite{poonen-rains}). 
Furthermore, our approach allows us to conjecture a formula for the moments of the number of $n$-torsion points in the Selmer groups of elliptic curves for 
all positive integer $n$ (not only squarefree).   

The computation of our averages involves several combinatorial tools. We introduce  multivariate polynomials
$R_\lambda$, indexed by integer partitions $\lambda$, depending on a fixed complex parameter $t$ and we investigate
combinatorial properties satisfied by them. As $\lambda$ runs through all the set of integer partitions with parts bounded by a fixed integer $\ell$, our polynomials form a
basis of the polynomial ring $\Z(t)[x_1,x_2, \dots,x_\ell]$. Hence, these polynomials $R_\la$ can be uniquely and explicitly
expanded in terms of the monomials, thus defining an infinite $\ell$-dimensional matrix, which can be explicitly inverted. The 
coefficients appearing in this inversion yield nice combinatorial expressions for our averages. Moreover, these coefficients appear to be related to a well-known algebraic combinatorial context, e.g., the problem of counting the number of subgroups of a finite abelian $p$-group with a given structure. This problem is actually at the heart of the construction of the Hall algebra, and the Hall-Littlewood symmetric polynomials. Furthermore, from the probabilistic 
model and the averages we computed, we obtain combinatorial identities involving $q$-series and the theory of Hall-Littlewood
functions. A precise combinatorial analysis of the previously mentioned coefficients appearing in the inversion process allows us to prove that the Poonen and Rains model and the heuristic model for Tate-Shafarevich groups are compatible.  \medskip

This article is organized as follows: in the next section, we recall the definitions of averages we consider. 
In Section~\ref{section_partitions}, we fix some combinatorial notations and 
recall basic facts about integer partitions; we will also study some combinatorial identities related to the theory of Hall-Littlewood 
symmetric functions. Then, we define in Section~\ref{polynomials} the polynomials $R_\lambda$ and prove the above mentioned 
inversion formula. We also give some properties satisfied by the coefficients appearing in the inversion process, and we show how they are related to the problem of counting subgroups of finite abelian $p$-groups. In Section~\ref{links}, we highlight the links between $R_\lambda$ and finite abelian $p$-groups and deduce some averages. In Section~\ref{philo}, we briefly recall the philosophy of the heuristics on class groups and on Tate-Shafarevich groups and give several consequences of our formulas on heuristics. In particular, concerning the Tate-Shafarevich groups of elliptic curves, we discuss how the heuristics principle and the Poonen and Rains model complement each other. Finally, we end the paper in Section~\ref{combinatorial_consequence} by some combinatorial consequences and related questions.

\section{$u$-averages on finite abelian $p$-groups and motivations}

Throughout the paper, $p$ will always denote a prime number. The letter $H$ (or $H_\lambda$ if $H$ is indexed by an integer 
partition $\lambda$) is used for an isomorphism class of a finite abelian $p$-group and the letter $G$  (or $G_\lambda$)  is used 
for an isomorphism class of a (finite abelian) $p$-group of type $S$. As in \cite{delaunay1, delaunay2}, we say that a $p$-group 
$G$ is of type $S$ if $G$ is a finite abelian $p$-group endowed with a non-degenerate, bilinear, alternating pairing:
 $$
 \beta \; :  \; G \times G \rightarrow \Q/\Z .
 $$ 
An isomorphism of groups of type $S$ is an isomorphism of groups preserving the pairings. A $p$-group of type $S$ has the form 
$G = H \times H$. Reciprocally, each group $G$, of the form $H \times H$ has a unique structure of group of type $S$ up to 
isomorphism. In particular, if $G$ is a group of type $S$ then $|\aut^s(G)|$, the number of automorphisms of $G$ that respect the 
(underlying) pairing~$\beta$, does not depend on $\beta$. Due to these facts, the pairings associated to groups of type $S$ will 
never be specified in the sequel. 

\subsection{Definitions of $u$-averages}
Let $h$ be a complex-valued function defined on isomorphism classes of finite abelian $p$-groups and let $u$ be a nonnegative real number. In \cite{cohen-lenstra},  Cohen and Lenstra defined an $u$-average\footnote{Actually, the $u$-averages $\m(h)$, as well as $\ms(h)$, are defined for all finite abelian groups, not only for $p$-groups. However, for the functions $h$ we have in mind, all the $p$-parts behave independently.} $\m(h)$, of the function $h$ (in particular, if $h$ is the characteristic function of a property ${\mathcal P}$, $\m(h)$ is called the $u$-probability of ${\mathcal P}$), and we have
$$
\m(h) = \frac{\displaystyle \sum_{n\geq 1} p^{-nu}\sum_{H(p^n)} \frac{h(H)}{|\aut(H)|}}{\displaystyle \prod_{j\geq1} \left(1-p^{-u-j}\right)^{-1}},
$$
if the first sum on the numerator converges, and where $\sum_{H(p^n)}$ means that the sum is over all isomorphism classes of finite abelian groups $H$ of order $p^n$. The number $\m(h)$ is indeed an average since whenever $h\equiv 1$ we have $\m(h)=1$, i.e. (see \cite{hall,cohen-lenstra})
$$
 \prod_{j\geq1} \left(1-p^{-u-j}\right)^{-1} = \sum_{n\geq 1} p^{-nu} \sum_{H(p^n)} \frac{1}{|\aut(H)|}.
$$
Cohen and Lenstra introduced such averages as a heuristic model in order to give precise predictions for the behavior of class 
groups of number fields varying in certain natural families. We will come back to this notion and give some examples later.

The aim of \cite{delaunay1} was to adapt Cohen-Lenstra's model, to give predictions for the behavior of Tate-Shafarevich groups 
of elliptic curves varying in certain natural families. Recall that if they are finite (as it is classically conjectured), Tate-Shafarevich 
groups are groups of type $S$. One can define, for a nonnegative real number $u$, the $u$-average (in the sense of groups of 
type $S$), denoted by $\ms(g)$, for a function $g$ defined on isomorphism classes of groups of type $S$. We have
$$
\ms(g) = \frac{\displaystyle \sum_{n\geq 1} p^{-nu} \sum_{G(p^n)} \frac{|G|\, g(G)}{|\auts(G)|}}{\displaystyle \prod_{j\geq1} \left(1-p^{-2u-2j+1}\right)^{-1}},
$$
if the first sum on the numerator converges, and where $\sum_{G(p^n)}$ means that the sum is over all isomorphism classes of 
groups $G$ of type $S$ and order $p^n$ (the sum being empty if $n$ is odd). As above, the number $\ms(g)$ is an average 
since if $g\equiv 1$, then $\ms(g)=1$, i.e. (see \cite{delaunay1}),
$$
\prod_{j\geq1} \left(1-p^{-2u-2j+1}\right)^{-1}= \sum_{n\geq 1} p^{-nu} \sum_{G(p^n)} \frac{|G|}{|\auts(G)|}=\sum_{n\geq 1} p^{-2nu} \sum_{G(p^{2n})} \frac{|G|}{|\auts(G)|}.
$$
As for finite abelian groups, $\ms(g)$ will be called the $u$-probability of ${\mathcal P}$ if $g$ is the characteristic function of a 
property ${\mathcal P}$ for groups of type $S$. It will always be clear in the context whether we are concerned with averages in 
the sense of finite abelian groups or groups of type $S$.
~\\
If $h$ is defined over isomorphism classes of finite abelian $p$-groups, it is not difficult to deduce the $u$-average $\ms(h)$ of 
$h$ restricted to isomorphism classes of $p$-groups of type $S$ from $\m(h)$, the $u$-average of $h$ in the sense of finite 
abelian groups (see \cite{delaunay2, delaunay3}), and reciprocally. 

\subsection{Motivations}
The goal of \cite{delaunay3} was to study the probability laws of  the $p^\ell$-ranks of abelian $p$-groups (resp. $p$-groups of 
type $S$) in the sense of the above probability model. In particular, a closed formula was given for the  $u$-averages of the function 
$h(H)=|H[p^\ell]|$ (resp. $g(G)=|G[p^\ell]|$) for any integer $\ell \geq 0$, where $H[p^\ell]$ denotes the $p^\ell$-torsion elements subgroup of $H$. Such  computations were motivated by a question of M. Bhargava about the heuristics for 
Tate-Shafarevich groups and their comparison with theoretical results of Bhargava and Shankar 
(\cite{bhargava_shankar_1,bhargava_shankar_2}) about the distribution of  $p$-Selmer groups (with $p=2$ and $3$) of all 
elliptic curves. \medskip

From this point of view, our work aims to generalize the study in \cite{delaunay3}, and to give formulas for the $u$-averages of 
$h(H)=|H[p^\ell]|^{m_\ell}$ (resp. $g(G)=|G[p^\ell]|^{m_\ell}$) for all integers $\ell \geq 0$ and $m_\ell \geq 0$. As we will see later, 
the computations are much more technical as they involve  deeper combinatorial tools. Roughly speaking, in order to evaluate the 
$u$-average of $h(H)=|H[p^\ell]|^{m_\ell}$, we will more generally compute the $u$-averages of any function 
$h(H)=|H[p]|^{m_1} |H[p^2]|^{m_2}\cdots |H[p^\ell]|^{m_\ell}$ for all nonnegative integers  
$m_1, m_2, \dots, m_{\ell}$. We  are able to find a formula by induction; but actually we directly obtain a closed formula by using a $\ell$-dimensional matrix inversion result which takes the place of the recursion principle. 
The motivation for this computation will be explained in the next section and comes originally from the following question of 
Poonen: take $g(G)=|G[p^\ell]|^{m_\ell}$; is  it true that $\mszero(g)=\msun(g)\cdot p^{\ell m_\ell}$? We will actually positively answer this question, by proving
the following more general result.
\begin{theorem}\label{coherent} For any positive integer $\ell$, let $m_1,m_2,\dots,m_\ell$ be $\ell$ nonnegative integers. Consider the following function $g$ defined on isomorphism classes of $p$-groups of type $S$:
$$
g(G) = |G[p]|^{m_1}|G[p^2]|^{m_2} \cdots |G[p^\ell]|^{m_\ell}.
$$
Then we have 
$$
\mszero(g) = \msun(g)\,p^{m_1+2m_2+\cdots+ \ell m_\ell}.
$$
\end{theorem}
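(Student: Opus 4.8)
The plan is to strip off the two normalizing denominators first, reducing the statement to a single identity about the numerators, and then to extract the power of $p$ from the explicit closed form for the averages. Write $D_u=\prod_{j\ge1}(1-p^{1-2u-2j})^{-1}$ for the denominator occurring in $\ms(g)$. The factors of $D_1$ (exponents $-3,-5,-7,\dots$) are exactly those of $D_0$ (exponents $-1,-3,-5,\dots$) with the single factor $(1-p^{-1})^{-1}$ deleted, so the infinite products telescope and $D_0/D_1=(1-p^{-1})^{-1}$. Moreover, every $p$-group $G$ of type $S$ and order $p^n$ satisfies $p^{-n}|G|=1$, so the numerator of $\msun(g)$ is just $\sum_G g(G)/|\auts(G)|$, while that of $\mszero(g)$ is $\sum_G|G|\,g(G)/|\auts(G)|$; that is, the $\mszero$-numerator is the $\msun$-numerator evaluated at the tilted function $G\mapsto|G|\,g(G)$. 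Putting these together, the theorem is equivalent to the single identity
$$\msun\bigl(|G|\,g(G)\bigr)=(1-p^{-1})^{-1}\,p^{m_1+2m_2+\cdots+\ell m_\ell}\,\msun\bigl(g(G)\bigr).$$
As a sanity check, setting all $m_j=0$ this reads $\msun(|G|)=(1-p^{-1})^{-1}$, which is nothing but the normalization $\mszero(1)=1$ in disguise.

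The core is then to evaluate the tilted average. Here I would use that a group of type $S$ has the form $G=H\times H$, whence $|G[p^j]|=|H[p^j]|^2$ and $g(G)=\prod_{j=1}^\ell|H[p^j]|^{2m_j}$, together with the closed form for the $u$-average of $\prod_j|G[p^j]|^{m_j}$ provided by the $R_\lambda$ expansion and the matrix inversion. The decisive feature I would aim to exploit is that in that closed form the parameter $u$ enters only through factors of the shape $(1-p^{c-2u})$; consequently the ratio $\mszero(g)/\msun(g)$ collapses, the bulk of the product cancelling between $u=0$ and $u=1$ and leaving a clean monomial in $p$. One then verifies that the surviving exponent is exactly $m_1+2m_2+\cdots+\ell m_\ell$, each level $p^j$ contributing the factor $p^{jm_j}$, these contributions being independent across $j=1,\dots,\ell$.

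The main obstacle is controlling the tilt by $|G|$. The order $|G|$ is \emph{not} a fixed monomial in the torsion quantities $|G[p^j]|$ with $j\le\ell$: it records $p$-power torsion of arbitrarily high level, so it cannot simply be folded into $g$, and the contribution of the tail $G/G[p^\ell]$ must be summed separately. The heart of the argument is to show that this tail contributes exactly the level-independent factor $(1-p^{-1})^{-1}$, while the finitely many levels $j\le\ell$ each contribute $p^{jm_j}$ — precisely the bookkeeping that inverting the $R_\lambda$-matrix is built to perform, and where the combinatorial effort concentrates. An alternative I would keep in reserve is an induction on $\ell$ that peels off the top level $p^\ell$ and lowers the weight $m_\ell$ one unit at a time, matching the recursion the authors mention; but I expect the closed-form route to be cleaner once the inversion formula is available.
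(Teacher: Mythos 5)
Your opening reduction is correct but purely cosmetic: cancelling the denominators (indeed $D_0/D_1=(1-p^{-1})^{-1}$) and using $p^{-n}|G|=1$ restates Theorem~\ref{coherent} as $\msun(|G|\,g)=(1-p^{-1})^{-1}p^{|\lambda|}\msun(g)$, with $|\lambda|=m_1+2m_2+\cdots+\ell m_\ell$. No work has been done at this point, and the reformulation actually creates an obstacle the paper never faces: as you yourself concede, $|G|$ is not a polynomial in the quantities $|G[p^j]|$, $j\leq\ell$, so the tilted average is not covered by the closed formulas you intend to invoke. Your only proposal for handling this is that the matrix inversion ``is built to perform'' the required bookkeeping, but it is not: the inversion of Theorem~\ref{inversion} expresses monomials in the basis $R_\mu$, and, combined with Theorem~\ref{nbr_inj} and Corollary~\ref{coro_1r}, it yields the $u$-averages of the functions $x^\lambda$ only; it says nothing about averages weighted by the full order $|G|$. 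The paper avoids the tilt entirely by computing $\ms(x^\lambda)$ for every $u$ (Theorem~\ref{umoy}) and comparing the specializations $u=0$ and $u=1$ directly.

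The genuine gap is the mechanism you propose for that comparison. You claim that in the closed form ``$u$ enters only through factors of the shape $(1-p^{c-2u})$'' so that the ratio collapses by cancellation. This misreads the formula: by Theorem~\ref{umoy},
$$
\ms(x^\lambda)=\sum_{\mu\subseteq\lambda}C_{\lambda,\mu}(p^2)\,p^{-|\mu|(2u-1)},
$$
a \emph{sum}, not a product, in which passing from $u=1$ to $u=0$ multiplies the term indexed by $\mu$ by $p^{2|\mu|}$ --- a factor that varies with $\mu$. Nothing cancels termwise, there is no level-by-level independence, and the assertion $\mszero(x^\lambda)=p^{|\lambda|}\msun(x^\lambda)$ is exactly equivalent to the self-reciprocity of the polynomial $\sum_{\mu\subseteq\lambda}C_{\lambda,\mu}(q)\,T^{|\mu|}$, i.e.\ to
$$
\sum_{\substack{|\mu|=k\\ \mu\subseteq\lambda}}C_{\lambda,\mu}(q)=\sum_{\substack{|\mu|=|\lambda|-k\\ \mu\subseteq\lambda}}C_{\lambda,\mu}(q),
$$
which is Theorem~\ref{miroir} and is the actual heart of the matter. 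The paper proves it via Lascoux's summation formula \eqref{lascoux} for Hall--Littlewood functions (and notes it also follows from the fact that $C_{\lambda,\mu}(1/p)$ counts subgroups of type $\mu$ in a group of type $\lambda$, so the identity is duality for finite abelian $p$-groups: there are as many subgroups of order $p^k$ as of order $p^{|\lambda|-k}$). Your proposal never identifies this symmetry, and without it the argument cannot be completed. A concrete illustration: for $\lambda=1^2$ one has $\ms(x^\lambda)=1+(p^2+1)p^{1-2u}+p^{2-4u}$, and the $u=0$ value equals $p^2$ times the $u=1$ value precisely because the coefficient sequence $(1,\,p^2+1,\,1)$ is palindromic, not because any $u$-dependent factors cancel.
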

As we will explain, the fact that the answer to Poonen's question is indeed positive makes the Poonen-Rains model for Selmer groups compatible 
with the heuristics for Tate-Shafarevich groups. \medskip
~\\
 From a combinatorial point of view, our computations and the tools they require (including the probabilistic model for abelian $p$-groups), allow us to obtain a family of interesting combinatorial identities. Some of them are related to summation formulas for Hall-Littlewood symmetric functions, and can be proved in a non trivial way through this theory, while others  seem to be new, or at least not direct consequences of previous known identities of the same type. At the heart of our computations appears some coefficients, denoted by $C_{\la,\mu}(q)$ (where $\la$ and $\mu$ are integer partitions), which satisfy nice combinatorial properties, and, setting $q=1/p$, are related to the number of subgroups $H_\mu$ in a finite abelian $p$-group $H_\la$ (a group $H_\la$ is often called a \emph{group of type $\la$}\footnote{If $\lambda:=(\lambda_1 \geq \lambda_2 \cdots)$ is a partition, a $p$-group $H_\lambda$ is said to be a group of type $\lambda$ if it is isomorphic to $\Z/p^{\lambda_1}\Z \oplus \Z/p^{\lambda_2}\Z \oplus \cdots$. We will not use this terminology in order to avoid any confusion with the notion of group of type $S$ we introduced before.}). In particular, their symmetry relation given by Theorem~\ref{miroir} yields a proof of Theorem~\ref{coherent}, and therefore a positive answer to Poonen's question. As we will see, Theorem~\ref{miroir} is actually a consequence of a well-known property in the framework of finite abelian $p$-groups \cite{Butler, Delsarte}, and therefore related to Hall-Littlewood symmetric polynomials. We shall however give an alternative  proof of this symmetry property by using a general summation formula (due to Lascoux \cite{Lascoux}) of Hall-Littlewood functions. Finally, to prove Theorem~\ref{coherent}, we will also need an evaluation of the number of injective homomorphisms of a group $H_\la$ into the finite abelian $p$-group $H$, associated to a result of Cohen and Lenstra from \cite{cohen-lenstra}.  


\section{Integer partitions and Hall-Littlewood functions}\label{section_partitions}

\subsection{Notations and definitions}
Recall that a \emph{partition} $\la:=(\la_1\geq\la_2\geq\cdots)$ of a nonnegative integer $n$ is a finite decreasing sequence of nonnegative integers whose sum is equal to $n$. We often use the notation $|\la|:=\sum_{i=1}^\ell\la_i=n$, and the integer $\ell=\ell(\la)=max\{i|\la_i>0\}$ is called the \emph{length} of the integer partition $\la$. We will also denote by $\la\vdash n$ the fact that $\la$ is a partition of the integer $n$. The integers $\la_i$ (for $1\leq i\leq \ell$) are the \emph{parts} of $\la$. It is also convenient to write an integer partition by its \emph{multiplicities}: we denote by $m_i=m_i(\la):=\#\{j|\la_j=i\}\geq0$ the multiplicity of the integer $i$ in $\la$. Therefore the partition $\la$ can be written $\displaystyle\la=1^{m_1}2^{m_2}\dots$, and we have $n=m_1+2m_2+\cdots$. Finally, we denote by $\la'$ the \emph{conjugate} partition of $\la$, whose parts are $\la_i':=\#\{j|\la_j\geq i\}$, for $1\leq i\leq \la_1$. Thus we have $|\la'|=|\la|$, $\ell(\la')=\la_1$, and for any integer $i$, $m_i(\la)=\la'_i-\la'_{i+1}$ (and equivalently $m_i(\la')=\la_i-\la_{i+1}$). \\ We will also use the following standard statistic on integer partitions
$$
n(\lambda):=\sum_{i\geq 1}(i-1)\lambda_i=\sum_{i\geq 1}\bi{\lambda'_i}{2}.
$$

\medskip

Next we recall some standard definitions and notations for $q$-series, which can be found in \cite{GasperRahman}. 
Let $q$ be a fixed complex parameter (the ``base'') with $|q|\neq 1$. We define for any real number $a$ and any $k\in\Z$ the \emph{$q$-shifted factorial} by
\begin{equation*}
(a)_k\equiv
(a;q)_k:=\left\{\begin{array}{l}1\;\;\mbox{if}\;\;k=0\\ (1-a)\dots
(1-aq^{k-1})\;\;\mbox{if}\;\;k>0\\1/(1-aq^{-1})\dots
(1-aq^{k})\;\;\mbox{if}\;\;k<0,
\end{array}\right.
\end{equation*}
and $(a)_\infty:=\lim_{k\to+\infty}(a)_k$. The base $q$ can be omitted when there is no confusion (writing $(a)_k$ for $(a;q)_k$, etc). For the sake of simplicity, write for $k\in\Z\cup\{\infty\}$
\begin{equation*}
(a_1,\ldots,a_m)_k:=(a_1)_k\times\cdots\times(a_m)_k.
\end{equation*}
\medskip
Recall also the definition of the \emph{$q$-binomial coefficient}
$$\qbi{n}{k}{q}:=\frac{(q)_n}{(q)_k(q)_{n-k}},$$
which is a polynomial in the variable $q$, with positive integral coefficients. \\

We will need later a special case of the so-called finite $q$-binomial theorem (see for instance \cite{GasperRahman}), which for any complex number $z$ and any nonnegative integer $n$, can be written as follows
\begin{equation}\label{qbin}
\sum_{k=0}^n(-1)^kz^kq^{k(k-1)/2}\qbi{n}{k}{q}=(z)_n.
\end{equation}

Next, we recall some basic facts about the theory of Hall-Littlewood functions, which can be found with more details in \cite[Chapter III]{Macdonald}. Hall-Littlewood polynomials were introduced by Hall to evaluate the number of subgroups of type $\mu$ and cotype $\nu$ in a finite abelian $p$-group of type $\la$, and expressed explicitly later by Littlewood (see for instance \cite{Butler} for detailed explanations of this construction). Let $S_n$ be the symmetric group, $\Lambda_n=\mathbb{Z}[x_1,\dots,x_n]^{S_n}$ be the ring of symmetric polynomials in $n$ independant variables and $\Lambda$ be the ring of symmetric functions in countably many independant variables. For a set of $n$ variables $x=\{x_1,\dots,x_n\}$, the Hall-Littlewood
polynomials $P_{\la}(x;q)$ are defined by
$$
P_{\la}(x;q):= \prod_{i\geq
1}\frac{(1-q)^{m_i}}{(q)_{m_i}}\,\sum_{w\in
S_n}w\left(x_1^{\la_1}\ldots x_n^{\la_n}
\prod_{i<j}\frac{x_i-qx_j}{ x_i-x_j}\right),
$$
where $w$ acts by permuting the variables. These polynomials are symmetric in $x$, homogeneous of degree $|\la|$, with coefficients in $\mathbb{Z}[q]$, and form a $\mathbb{Z}[q]$ basis of $\Lambda_n[q]$, which interpolates between monomial (for $q=1$) and Schur (for $q=0$) polynomials. Moreover, thanks to their classical stability property, they may be extended to the Hall-Littlewood functions in an infinite number of variables in order to form a $\mathbb{Z}[q]$ basis of $\Lambda[q]$. Another useful result is the specialization
\begin{equation}\label{principale}
P_{\la}(z,zq,\dots,zq^{n-1};q)=\frac{z^{|\lambda|}q^{n(\lambda)}(q)_n}{(q)_{n-\ell(\lambda)}b_\la(q)},
\end{equation}
where 
$$b_\la(q):=\prod_{i\geq1}(q)_{\lambda'_i-\lambda'_{i+1}}.$$

Finally, by using the Cauchy identity for Hall-Littlewood polynomials \cite[p.~224, (4.4)]{Macdonald}, the specialization~\eqref{principale}, and the Pieri formula \cite[p.~224, (3.2)]{Macdonald}, it is possible to derive the following identity from \cite{Macdonald}, which can be seen as a $q$-binomial theorem for Hall-Littlewood functions
\begin{equation}\label{qbinhl}
\sum_{\la}q^{n(\la)}(a;q^{-1})_{\ell(\la)}P_\la(x;q)=\prod_{i\geq1}\frac{1-ax_i}{1-x_i},
\end{equation}
where $|q|<1$, $a$ is a complex number, and the summation is taken over all integer partitions.
\subsection{Lascoux's summation of Hall-Littlewood functions}
The Rogers-Rama-nujan identities are two of the most famous $q$-series identities, with deep connections to many branches of mathematics. Lots of proofs of these identities have been given, among which one by Stembridge in \cite{stembridge}, who generalized work by Macdonald from \cite{Macdonald} to show that the Rogers-Ramanujan identities (among others) may be obtained by appropriate specializations of finite summations for Hall-Littlewood polynomials. In \cite{ASW}, Andrews, Schilling and Warnaar generalized the Rogers-Ramanujan identities to three identities labelled by the Lie algebra $A_2$. Later, seeking a way of proving the latter through the theory of Hall-Littlewood polynomials as Stembridge did for the Rogers-Ramanujan identities, Warnaar showed in \cite{Warnaar} the following $A_2$-type identity for Hall-Littlewood functions
\begin{equation}\label{warnaara2hl}
\sum_{\la,\,\mu}q^{n(\la)+n(\mu)-(\la'|\mu')}P_\la(x;q)P_\mu(y;q)
=\prod_{i\geq1}\frac{1}{(1-x_i)(1-y_i)}\prod_{i,j\geq1}\frac{1-x_iy_j}{1-x_iy_j/q}\,,
\end{equation}
for $|q|<1$, where the sum is over all pairs of integers partitions, and $(\la|\mu):=\sum_{i\geq1}\lambda_i\mu_i$. In \cite{Warnaar}, Warnaar explains how \eqref{warnaara2hl} yields one of the three $A_2$-type Rogers-Ramanujan identities from \cite{ASW}.\\
For our purpose, we will actually need a generalization of \eqref{warnaara2hl}, which was discovered and proved by Lascoux in \cite{Lascoux}. To this aim, he used the modified Hall-Littlewood polynomials $Q'_\mu$, which form the adjoint basis of  $P_\la$ with respect to the Hall scalar product (see \cite{Lascoux, Macdonald} for details). More precisely, Lascoux's formula can be written as
\begin{equation}\label{lascoux}
\sum_{\la}\sum_{\mu\subseteq\la}P_\la(x;q)P_\mu(y;q)b_\la(q)Q'_{\la/\mu}(\bold{1};q)
=\prod_{i\geq1}\frac{1}{1-x_i}\prod_{j\geq1}\frac{1-qx_iy_j}{1-x_iy_j}\,, \;(|q|<1)\,,
\end{equation}
where $\mu\subseteq\la$ means that $\mu_i\leq\la_i$ for all integers $i$, $Q'_{\la/\mu}$ are the skew modified Hall-Littlewood polynomials, and the coefficients $Q'_{\la/\mu}(\bold{1};q)$ are explicitely given by
\begin{equation}\label{coeflascoux}
Q'_{\la/\mu}(\bold{1};q)=q^{|\mu|+n(\la)+n(\mu)-(\la'|\mu')}\prod_{i\geq1}\qbi{\la'_i-\mu'_{i+1}}{\la'_i-\mu'_i}{q}.
\end{equation}
It is easy to see that $Q'_{\la/\mu}(\bold{1};q)$ are polynomials in the variable $q$, with positive integral coefficients, for which Lascoux gave a combinatorial interpretation in \cite{Lascoux}. As remarked in \cite{WarnaarZudilin}, setting $q=1/p$ with $p$ a prime number, $Q'_{\la/\mu}(\bold{1};1/p)$ are related to the number of subgroups of type $\mu$ in a finite abelian $p$-group of type $\la$. We will come back to this link in the next section.
Finally, although it is not trivial at first sight, Lascoux showed in \cite{Lascoux} how Warnaar's identity \eqref{warnaara2hl} is a consequence of \eqref{lascoux}.


\section{The polynomials $R_{\lambda} (x,t)$}\label{polynomials}
For our purpose, we fix a positive integer $\ell$, and we consider a partition $\la=(\la_1\geq\la_2\geq\cdots)$ such that  $\la_1\leq \ell$ (i.e., the length of the conjugate partition $\la'$ satisfies $\ell(\la')\leq \ell$). For a set of $\ell$ variables $x=\{x_1,\dots,x_\ell\}$ (adding by  convention $x_0:=1$) and a complex parameter $t$, we define the polynomial\footnote{Note that we take the parameter $t$ in order to avoid confusions since we will need to specialize $t=q$ and $t=1/q$ later.}
\begin{equation}\label{polyn}
R_{\la}(x;t):=\prod_{i=1}^\ell\,\prod_{j=\la_{i+1}'}^{\la_i'-1}(x_i-t^jx_{i-1}),
\end{equation}
which can be rewritten through the multiplicities $m_1,\dots,m_\ell$ of the partition $\la$ as follows
\begin{eqnarray}
R_{\la}(x;t)&=&\prod_{i=1}^\ell x_i^{\la_i'-\la_{i+1}'}\prod_{j=0}^{\la_i'-\la_{i+1}'-1}\left(1-t^{j+\la_{i+1}'}\frac{x_{i-1}}{x_i}\right)\nonumber\\
&=&\prod_{i=1}^\ell x_i^{m_i}(t^{m_{i+1}+\dots+m_\ell}x_{i-1}/x_i;t)_{m_i}\label{polyn0}.
\end{eqnarray}
It is clear that whenever $\lambda$ runs through all partitions with $\lambda_1\leq \ell$, then the polynomials $R_\lambda(x;t)$ form a basis of $\Z(t)[x_1,x_2,\dots, x_\ell]$.  
~\\
Our goal is to expand the polynomial $R_\la$ in terms of the monomials $x_1^{k_1}\dots x_\ell^{k_\ell}$, and to invert this relation. To this aim, writing $\la=1^{m_1}2^{m_2}\dots$, we have to express the monomial $x_1^{m_1}\dots x_\ell^{m_\ell}$ in terms of the polynomials $R_{\mu}(x;t)$, where $\mu=(\mu_1\geq\mu_2\geq\dots)$ are integer partitions included in $\la$, i.e., satisfying $\mu_i\leq\la_i$ for any integer $i$ (note that this is equivalent to the conditions $\mu'_i\leq\la'_i$ for any integer $i$).\medskip
~\\
It is easy to see that $R_\lambda$ can be expanded as a sum of monomials $x^\mu$ for $\mu \subseteq \lambda$, and this is what we explicitly do in the next proposition.
\begin{proposition} We have 
\begin{multline*}
R_{\la}(x;q)=\sum_{\mu\subseteq\la}x_1^{\mu'_1-\mu'_2}\dots x_\ell^{\mu'_\ell}\,(-1)^{|\la|-|\mu|}\,q^{n(\la\setminus\mu)}\\
\times q^{\la'_2(\la'_1-\mu'_1)+\dots+\la'_\ell(\la'_{\ell-1}-\mu'_{\ell-1})}\prod_{i\geq1}\qbi{\la'_i-\la'_{i+1}}{\la'_i-\mu'_i}{q}.
\end{multline*}
\end{proposition}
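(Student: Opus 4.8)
The plan is to expand each factor of the product form \eqref{polyn0} of $R_\la$ by the finite $q$-binomial theorem \eqref{qbin}, and then to identify the resulting multiple sum, after a suitable change of summation variables, with the claimed sum over $\mu\subseteq\la$. First I would record that, since $m_i=\la'_i-\la'_{i+1}$, the exponent in \eqref{polyn0} telescopes to $m_{i+1}+\dots+m_\ell=\la'_{i+1}$ (using $\la_1\le\ell$, so that $\la'_{\ell+1}=0$), whence
\[
R_\la(x;q)=\prod_{i=1}^\ell x_i^{m_i}\,(q^{\la'_{i+1}}x_{i-1}/x_i;q)_{m_i}.
\]
Applying \eqref{qbin} to the $i$-th factor with $z=q^{\la'_{i+1}}x_{i-1}/x_i$ and $n=m_i$ yields
\[
x_i^{m_i}(q^{\la'_{i+1}}x_{i-1}/x_i;q)_{m_i}=\sum_{k_i=0}^{m_i}(-1)^{k_i}q^{\bi{k_i}{2}+\la'_{i+1}k_i}\qbi{m_i}{k_i}{q}\,x_{i-1}^{k_i}x_i^{m_i-k_i}.
\]

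Next I would multiply these $\ell$ sums together, obtaining a sum over tuples $(k_1,\dots,k_\ell)$ with $0\le k_i\le m_i$, and perform the substitution $\mu'_i:=\la'_i-k_i$ (equivalently $k_i=\la'_i-\mu'_i$), keeping the conventions $x_0=1$ and $\la'_{\ell+1}=\mu'_{\ell+1}=k_{\ell+1}=0$. Three routine bookkeeping computations then remain. For the monomial, $x_j$ receives exponent $m_j-k_j$ from the $j$-th factor and $k_{j+1}$ from the $(j{+}1)$-th factor, and $(m_j-k_j)+k_{j+1}=\mu'_j-\mu'_{j+1}$, producing $x_1^{\mu'_1-\mu'_2}\cdots x_\ell^{\mu'_\ell}$ (the $x_0=1$ coming from the first factor contributes nothing). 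For the sign, $\prod_i(-1)^{k_i}=(-1)^{\sum_i(\la'_i-\mu'_i)}=(-1)^{|\la|-|\mu|}$. Finally each $q$-binomial becomes $\qbi{m_i}{k_i}{q}=\qbi{\la'_i-\la'_{i+1}}{\la'_i-\mu'_i}{q}$, exactly as in the statement.

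The step requiring the most care is matching the power of $q$, where one must correctly split the exponent into its two advertised pieces. The explicit $q$-exponent produced by the expansion is $\sum_{i=1}^\ell\bigl(\bi{k_i}{2}+\la'_{i+1}k_i\bigr)$. Since $\la'_{\ell+1}=0$, the second summand equals $\sum_{i=1}^{\ell-1}\la'_{i+1}(\la'_i-\mu'_i)$, which is precisely the factor $q^{\la'_2(\la'_1-\mu'_1)+\dots+\la'_\ell(\la'_{\ell-1}-\mu'_{\ell-1})}$ appearing in the statement. The first summand is $\sum_{i\ge1}\bi{\la'_i-\mu'_i}{2}$, which is exactly $n(\la\setminus\mu)$, the $n$-statistic of the skew shape whose column heights are $\la'_i-\mu'_i$; as a sanity check one verifies that this agrees with $n(\la)+n(\mu)-(\la'|\mu')+|\mu|$, consistent with \eqref{coeflascoux}. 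Assembling the monomial, the sign, the $q$-power and the $q$-binomial factors then gives the claimed identity.

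The one remaining point I would address explicitly is the range of summation. The substitution $\mu'_i=\la'_i-k_i$ is a bijection between tuples with $0\le k_i\le m_i$ and tuples $(\mu'_i)$ satisfying $\la'_{i+1}\le\mu'_i\le\la'_i$ for all $i$; the chain $\mu'_{i+1}\le\la'_{i+1}\le\mu'_i$ shows any such tuple is weakly decreasing, hence is the conjugate of a genuine partition $\mu\subseteq\la$. Conversely, if $\mu\subseteq\la$ violates some inequality $\mu'_i\ge\la'_{i+1}$, then the $q$-binomial $\qbi{\la'_i-\la'_{i+1}}{\la'_i-\mu'_i}{q}$ vanishes, so extending the sum to all $\mu\subseteq\la$, as written in the statement, merely adds zero terms. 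This justifies the notation $\sum_{\mu\subseteq\la}$ and completes the argument.
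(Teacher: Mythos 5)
Your proof is correct and takes essentially the same route as the paper's: expand the product form \eqref{polyn0} by $\ell$ applications of the finite $q$-binomial theorem \eqref{qbin}, then substitute $k_i=\la'_i-\mu'_i$. You simply make explicit two bookkeeping points the paper leaves implicit, namely the identification $n(\la\setminus\mu)=\sum_{i\geq1}\bi{\la'_i-\mu'_i}{2}$ and the fact that extending the sum to all $\mu\subseteq\la$ only adds terms whose $q$-binomial coefficients vanish.
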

\begin{proof}
We expand the expression \eqref{polyn0} of the polynomials $R_\la(x;q)$ by using $\ell$ times the $q$-binomial theorem \eqref{qbin}, which yields
\begin{multline*}
R_{\la}(x;q)=\sum_{k_1,\dots,k_\ell}x_1^{m_1-k_1+k_2}\dots x_{\ell-1}^{m_{\ell-1}-k_{\ell-1}+k_\ell}x_\ell^{m_\ell-k_\ell}\,(-1)^{k_1+\dots+k_\ell}\\
\times q^{\bi{k_1}{2}+\dots+\bi{k_\ell}{2}}q^{(m_2+\dots+m_\ell)k_1+\dots+m_\ell k_{\ell-1}}\qbi{m_1}{k_1}{q}\dots\qbi{m_\ell}{k_\ell}{q}.
\end{multline*}
Definig the integer partition $\mu$ by $\mu=1^{k_1}2^{k_2}\dots \ell^{k_\ell}$, and replacing in this expression $k_i$ by $\la'_i-\mu'_i$, we obtain the formula.
\end{proof}
~\\
We are now interested in the reciprocal identity, i.e., the expansion of the monomials $x^\lambda$ in terms of the polynomials $R_\mu$.  For this, we need  the following multidimensional matrix inversion, which was communicated to us by Michael Schlosser. 
\begin{lemma}\label{lemme_un}
The infinite lower-triangular $\ell$-dimensional matrices $F=(F_{\textbf{a},\textbf{b}})_{\textbf{a},\textbf{b}\,\in\mathbb{Z}^\ell}$ and $G=(G_{\textbf{a},\textbf{b}})_{\textbf{a},\textbf{b}\,\in\mathbb{Z}^\ell}$ are inverses of each other, where
$$F_{\textbf{a},\textbf{b}}=(-1)^{|\textbf{a}|-|\textbf{b}|}q^{\sum_{i=1}^\ell\left(\bi{a_i+1}{2}+\bi{b_i}{2}+a_ib_{i+1}\right)}\prod_{i=1}^\ell\frac{(q)_{a_i-b_{i+1}}}{(q)_{a_i-b_{i}}}\,,$$
and 
$$G_{\textbf{a},\textbf{b}}=q^{-\sum_{i=1}^\ell b_i(a_i+a_{i+1})}\prod_{i=1}^\ell\frac{1}{(q)_{a_i-b_{i}}(q)_{b_i-a_{i+1}}}\,,$$
with $a_{\ell+1}=b_{\ell+1}=0$ and $|\textbf{a}|=a_1+\dots+a_\ell$, $|\textbf{b}|=b_1+\dots+b_\ell$.
\end{lemma}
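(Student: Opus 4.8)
The plan is to exploit the lower-triangular structure of $F$ and $G$. Since $1/(q)_n$ vanishes for every negative integer $n$, the entry $F_{\textbf{a},\textbf{b}}$ can be nonzero only when $b_i\leq a_i$ and $b_{i+1}\leq a_i$ for all $i$, while $G_{\textbf{a},\textbf{b}}$ requires $a_{i+1}\leq b_i\leq a_i$; on the relevant cone $a_1\geq a_2\geq\cdots\geq a_\ell\geq a_{\ell+1}=0$ the diagonal entries are nonzero. As lower-triangular matrices with invertible diagonal form a group, it suffices to verify the single orthogonality relation $\sum_{\textbf{c}}F_{\textbf{a},\textbf{c}}\,G_{\textbf{c},\textbf{b}}=\delta_{\textbf{a},\textbf{b}}$, the reverse relation then being automatic. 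Writing out the product, the summand equals, up to the sign $(-1)^{|\textbf{a}|-|\textbf{c}|}$ and an explicit power of $q$ coming from $\sum_i\binom{c_i}{2}$, $\sum_i a_ic_{i+1}$ and $-\sum_i b_i(c_i+c_{i+1})$, the quantity $\prod_{i=1}^\ell (q)_{a_i-c_{i+1}}/\big((q)_{a_i-c_i}(q)_{c_i-b_i}(q)_{b_i-c_{i+1}}\big)$, the support forcing $b_i\leq c_i\leq a_i$ so that every sum is finite.

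The key observation is that the variable $c_1$ decouples: it never occurs in a cross term $a_ic_{i+1}$ or $b_ic_{i+1}$, so it appears only through $\binom{c_1}{2}$, $-b_1c_1$, the sign $(-1)^{c_1}$ and the two factorials $(q)_{a_1-c_1}$ and $(q)_{c_1-b_1}$. Summing over $c_1\in\{b_1,\dots,a_1\}$, writing $j=c_1-b_1$ and using $\tfrac{1}{(q)_{a_1-c_1}(q)_{c_1-b_1}}=\tfrac{1}{(q)_{a_1-b_1}}\qbi{a_1-b_1}{j}{q}$ together with $\binom{b_1+j}{2}-b_1(b_1+j)=\binom{j}{2}+\binom{b_1}{2}-b_1^2$, the $c_1$-dependent factors reduce to
\[
\frac{(-1)^{b_1}q^{\binom{b_1}{2}-b_1^2}}{(q)_{a_1-b_1}}\sum_{j=0}^{a_1-b_1}(-1)^jq^{\binom{j}{2}}\qbi{a_1-b_1}{j}{q}=\frac{(-1)^{b_1}q^{\binom{b_1}{2}-b_1^2}}{(q)_{a_1-b_1}}\,(1;q)_{a_1-b_1},
\]
by the $q$-binomial theorem \eqref{qbin} with $z=1$. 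Since $(1;q)_n=\delta_{n,0}$, this vanishes unless $a_1=b_1$, and forces $c_1=a_1=b_1$.

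It then remains to check that, once $a_1=b_1=c_1$, every contribution carrying the index $i=1$ collapses to $1$: the residual factorials cancel, $(q)_{a_1-c_2}/(q)_{b_1-c_2}=1$; the surviving cross terms cancel, $a_1c_2-b_1c_2=0$; the signs square to $1$, $(-1)^{a_1+b_1}=1$; and the remaining pure $q$-powers give $\binom{a_1+1}{2}+\binom{b_1}{2}-b_1^2=0$ by $\binom{a_1+1}{2}+\binom{a_1}{2}=a_1^2$. Consequently the residual sum over $c_2,\dots,c_\ell$ is precisely the orthogonality relation for the $(\ell-1)$-dimensional instance of $F,G$, with the same boundary convention, and an induction on $\ell$ — whose base case $\ell=1$ is exactly the computation of the previous paragraph — completes the proof. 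The main obstacle, and the only point demanding genuine care, is this exponent bookkeeping: one must confirm that $c_1$ truly decouples so that its sum is a clean instance of \eqref{qbin}, and that after specializing $a_1=b_1$ every leftover prefactor and sign cancels exactly, so that the surviving summand matches the shifted inversion term for term and the induction hypothesis applies.
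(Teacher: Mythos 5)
Your proof is correct, and it takes a genuinely different route from the paper's. The paper does not verify the inversion directly: it deduces the pair $(F,G)$ from a multidimensional matrix inversion of Schlosser \cite{Schlosser} (itself a special case of the Lassalle--Schlosser inversion of the Pieri rule for Macdonald polynomials), by specializing $t_0\to 0$ and $t_i\mapsto u_i/u_{i+1}$, conjugating by explicit diagonal matrices $D$ and $E$, and then letting $u_1,\dots,u_\ell\to\infty$ in that order. You instead check the orthogonality relation $\sum_{\textbf{c}}F_{\textbf{a},\textbf{c}}\,G_{\textbf{c},\textbf{b}}=\delta_{\textbf{a},\textbf{b}}$ by hand, and your bookkeeping is right: $c_1$ occurs only through $(-1)^{c_1}q^{\bi{c_1}{2}-b_1c_1}\big/\bigl((q)_{a_1-c_1}(q)_{c_1-b_1}\bigr)$, its sum collapses via \eqref{qbin} at $z=1$ to $(1;q)_{a_1-b_1}=\delta_{a_1,b_1}$ up to the stated prefactor, and once $a_1=b_1$ the leftover sign, the $q$-power (using $\bi{a_1+1}{2}+\bi{a_1}{2}=a_1^2$), the cross terms $a_1c_2-b_1c_2$, and the ratio $(q)_{a_1-c_2}/(q)_{b_1-c_2}$ all cancel, leaving exactly the $(\ell-1)$-dimensional summand, so the induction closes; the appeal to ``a one-sided inverse suffices'' is legitimate for matrices that are lower triangular with nonvanishing diagonal with respect to the componentwise order, whose intervals are finite. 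What each approach buys: yours is elementary and self-contained, using only the finite $q$-binomial theorem already quoted in the paper, and it makes the mechanism of the inversion transparent; the paper's derivation outsources the computational core to \cite{Schlosser}, but in exchange situates the lemma in the Macdonald-polynomial context, which is consonant with the later remark that the inversion can also be obtained as a limit of Okounkov's inversion for $(q,t)$-binomial coefficients. One small point you should make explicit (the paper is equally silent about it): for arbitrary $\textbf{a},\textbf{b}\in\Z^\ell$ the numerator $(q)_{a_i-b_{i+1}}$ can be undefined under the standard conventions, so the entries, the triangular-group argument, and the finiteness of all sums should be read on the cone of weakly decreasing nonnegative vectors (conjugate partitions), which is the only case used in the application to Theorem~\ref{inversion}.
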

\begin{proof}
We will see that the previous inversion follows from a suitable specialization of a multidimensional matrix inversion due to Schlosser in \cite{Schlosser}, which is itself a special case of the inversion of the Pieri rule for Macdonald polynomials \cite[Theorem~2.6]{LassalleSchlosser}. First recall that the infinite lower-triangular $\ell$-dimensional matrices $F=(F_{\textbf{a},\textbf{b}})_{\textbf{a},\textbf{b}\,\in\mathbb{Z}^\ell}$ and $G=(G_{\textbf{a},\textbf{b}})_{\textbf{a},\textbf{b}\,\in\mathbb{Z}^\ell}$ are inverses of each other means that for any pair of infinite $\ell$-dimensional vectors $\alpha_{\textbf{a}}$ and $\beta_{\textbf{b}}$, we have
\begin{equation}\label{definversion}
\alpha_{\textbf{a}}=\sum_{\textbf{b}\leq\textbf{a}}F_{\textbf{a},\textbf{b}}\,\beta_{\textbf{b}}\Leftrightarrow\beta_{\textbf{a}}=\sum_{\textbf{b}\leq\textbf{a}}G_{\textbf{a},\textbf{b}}\,\alpha_{\textbf{b}},
\end{equation}
where $\textbf{b}\leq\textbf{a}$ means that $b_i\leq a_i$ for all integers $i\in\{1,\dots,\ell\}$. Now in \cite[(A.9)]{Schlosser}, replace $n$ by $\ell$, let $t_0\to 0$ and $t_i\mapsto u_i/u_{i+1}$ for $i=1,\dots,\ell$. We denote by $(f_{\textbf{a},\textbf{b}})_{\textbf{a},\textbf{b}\,\in\mathbb{Z}^\ell}$ and $(g_{\textbf{a},\textbf{b}})_{\textbf{a},\textbf{b}\,\in\mathbb{Z}^\ell}$ the resulting infinite lower-triangular $\ell$-dimensional matrices, which are inverses of each other. Then we define infinite invertible diagonal $\ell$-dimensional matrices $D$ and $E$ by
\begin{multline*}
D_{\textbf{a}}:=u_1^{a_1}u_2^{2a_2-a_1}\dots u_\ell^{\ell a_\ell-(\ell-1)a_{\ell-1}}u_{\ell+1}^{-\ell a_\ell}(-1)^{a_\ell}\prod_{i=1}^{\ell-1}(q)_{a_i-a_{i+1}}\\
\times q^{a_l^2/2+a_\ell/2-\sum_{i=1}^\ell(a_ia_{i+1}-ia_i)}
\end{multline*}
and
\begin{multline*}
E_{\textbf{b}}:=u_1^{b_1}u_2^{2b_2-b_1}\dots u_\ell^{\ell b_\ell-(\ell-1)b_{\ell-1}}u_{\ell+1}^{-\ell b_\ell}(-1)^{b_\ell}(q)_{b_\ell}\times q^{b_\ell^2/2+b_\ell/2-\sum_{i=1}^\ell(b_i^2-ib_i)}.
\end{multline*}
It is clear that through the definition in \eqref{definversion}, the matrices  $(D_{\textbf{b}}f_{\textbf{a},\textbf{b}}E_{\textbf{a}}^{-1})_{\textbf{a},\textbf{b}\,\in\mathbb{Z}^\ell}$ and $(E_{\textbf{b}}g_{\textbf{a},\textbf{b}}D_{\textbf{a}}^{-1})_{\textbf{a},\textbf{b}\,\in\mathbb{Z}^\ell}$ are still inverses of each other. Finally, we let $u_1\to\infty$, then $u_2\to\infty$, ... , $u_\ell\to\infty$ (in this order) in the previous matrices, which yields after some complicated but elementary manipulations
$$\lim_{u_1,\dots,u_\ell\to\infty}D_{\textbf{b}}f_{\textbf{a},\textbf{b}}E_{\textbf{a}}^{-1}=F_{\textbf{a},\textbf{b}},$$
and
$$\lim_{u_1,\dots,u_\ell\to\infty}E_{\textbf{b}}g_{\textbf{a},\textbf{b}}D_{\textbf{a}}^{-1}=G_{\textbf{a},\textbf{b}}.$$
This completes the proof.
\end{proof}
Now we are able to prove the following result, which is our desired inversion.
\begin{theorem}\label{inversion}
For any positive integer $\ell$, and an integer partition $\la=1^{m_1}2^{m_2}\dots \ell^{m_\ell}$, we have
\begin{equation*}\label{inversion}
x_1^{m_1}\dots x_\ell^{m_\ell}=\sum_{\mu\subseteq\la}C_{\la,\,\mu}(q)\,R_\mu(x;q),
\end{equation*}
where 
\begin{equation}\label{coeff}
C_{\la,\,\mu}(q):=q^{\sum_{i=1}^\ell\mu'_{i+1}(\la'_i-\mu'_i)}
\,\prod_{i\geq1}\qbi{\la'_i-\mu'_{i+1}}{\la'_i-\mu'_i}{q}.
\end{equation}
\end{theorem}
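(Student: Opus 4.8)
The plan is to read off the forward change of basis from the preceding Proposition and to recognize it, after conjugation by suitable diagonal matrices, as the matrix $G$ of Lemma~\ref{lemme_un}; inverting through the Lemma then produces the matrix $F$, from which \eqref{coeff} will drop out. Concretely, the monomial $x_1^{\mu'_1-\mu'_2}\cdots x_\ell^{\mu'_\ell}$ appearing in the Proposition is exactly $x_1^{m_1(\mu)}\cdots x_\ell^{m_\ell(\mu)}$, so writing $A_{\la,\mu}(q)$ for the coefficient furnished by the Proposition we have $R_\la(x;q)=\sum_{\mu\subseteq\la}A_{\la,\mu}(q)\,x_1^{m_1(\mu)}\cdots x_\ell^{m_\ell(\mu)}$. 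Since $\{R_\mu\}$ and the monomials both form bases of $\Z(t)[x_1,\dots,x_\ell]$ and $A_{\la,\la}=1$, the matrix $(A_{\la,\mu})$ is uni-triangular for the inclusion order on partitions with largest part $\le\ell$, hence invertible; the content of the Theorem is that its inverse is $(C_{\la,\mu})$. I would match indices by putting $\textbf{a}=(\la'_1,\dots,\la'_\ell)$ and $\textbf{b}=(\mu'_1,\dots,\mu'_\ell)$, so that $\mu\subseteq\la$ becomes $\textbf{b}\le\textbf{a}$ and $a_{\ell+1}=b_{\ell+1}=0$ as in the Lemma.

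The heart of the argument is to produce invertible diagonal matrices $D=(D_\textbf{a})$ and $E=(E_\textbf{b})$ with
$$A_{\la,\mu}(q)=D_{\la'}\,G_{\la',\mu'}\,E_{\mu'}^{-1}.$$
Comparing the Proposition with the explicit $G_{\textbf{a},\textbf{b}}$, the $q$-shifted factorials already agree up to the factor $b_\la(q)=\prod_i(q)_{\la'_i-\la'_{i+1}}$, which depends on $\textbf{a}$ alone, and the signs contribute $(-1)^{|\la|}(-1)^{|\mu|}$, which splits between $\textbf{a}$ and $\textbf{b}$. The delicate point is the exponent of $q$: I would substitute $n(\la\setminus\mu)=\sum_i\bi{\la'_i-\mu'_i}{2}$ and reindex $\sum_{i\ge2}\la'_i(\la'_{i-1}-\mu'_{i-1})=\sum_i\la'_{i+1}(\la'_i-\mu'_i)$, then check that after subtracting the exponent of $G_{\la',\mu'}$ the bilinear cross term $\sum_i\la'_i\mu'_i$ cancels, leaving an expression of the form $f(\textbf{a})+g(\textbf{b})$ with $f(\textbf{a})=\sum_i\bi{a_i}{2}+\sum_i a_ia_{i+1}$ and $g(\textbf{b})=\sum_i\bi{b_i+1}{2}$. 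This cancellation is exactly what lets the quotient factor as $D_{\la'}/E_{\mu'}$, and it is the step I expect to be the main obstacle, since it rests entirely on careful bookkeeping of the three quadratic contributions. With $D$ and $E$ so defined, Lemma~\ref{lemme_un} gives that the inverse of $A$ is $E_{\la'}F_{\la',\mu'}D_{\mu'}^{-1}$, because conjugation by a fixed pair of diagonal matrices sends the inverse pair $(G,F)$ to an inverse pair.

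It then remains to simplify $E_{\la'}F_{\la',\mu'}D_{\mu'}^{-1}$ into the closed form \eqref{coeff}. Here the signs cancel to $+1$, the exponents $\sum_i\bi{\la'_i+1}{2}$ and $\sum_i\bi{\mu'_i}{2}$ carried by $F$ cancel against the diagonal contributions, and the remaining exponent collapses to $\sum_i\mu'_{i+1}(\la'_i-\mu'_i)$; simultaneously the factor $b_\mu(q)^{-1}$ coming from $D_{\mu'}^{-1}$ turns the ratio $(q)_{\la'_i-\mu'_{i+1}}/(q)_{\la'_i-\mu'_i}$ appearing in $F$ into the $q$-binomial $\qbi{\la'_i-\mu'_{i+1}}{\la'_i-\mu'_i}{q}$, yielding precisely $C_{\la,\mu}(q)$. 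Since $(C_{\la,\mu})$ is thus the inverse of the uni-triangular matrix $(A_{\la,\mu})$ over the poset of partitions contained in $\la$, the identity $x_1^{m_1}\cdots x_\ell^{m_\ell}=\sum_{\mu\subseteq\la}C_{\la,\mu}(q)R_\mu(x;q)$ follows. The only logical point needing care is that the diagonal conjugation and the inversion of Lemma~\ref{lemme_un} are a priori statements over all of $\mathbb{Z}^\ell$, so I would note that the vanishing of the $q$-binomials for $k<0$ or $k>n$ confines both matrices to partition indices, making the restricted matrices genuine mutual inverses.
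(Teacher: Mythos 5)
Your proposal is correct and follows essentially the same route as the paper's own proof: both recast the Proposition's expansion as the relation $\beta_{\textbf{a}}=\sum_{\textbf{b}\leq\textbf{a}}G_{\textbf{a},\textbf{b}}\,\alpha_{\textbf{b}}$ (your diagonal matrices $D$, $E$ are exactly the paper's rescaled vectors $\beta_{\textbf{a}}$, $\alpha_{\textbf{b}}$), apply Lemma~\ref{lemme_un} with $a_i=\la'_i$, $b_i=\mu'_i$, and simplify $E_{\la'}F_{\la',\mu'}D_{\mu'}^{-1}$ to the closed form \eqref{coeff}. Your exponent bookkeeping (the cross terms $\sum_i b_i(a_i+a_{i+1})$ being absorbed by $G$, leaving $\sum_i\bigl(\binom{a_i}{2}+a_ia_{i+1}\bigr)+\sum_i\binom{b_i+1}{2}$) checks out, so there is nothing to correct.
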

\begin{proof}
Recall that we have 
\begin{multline*}
R_{\la}(x;q)=\sum_{\mu\subseteq\la}x_1^{\mu'_1-\mu'_2}\dots x_\ell^{\mu'_\ell}\,(-1)^{|\la|-|\mu|}\,q^{\bi{\la'_1-\mu'_1}{2}+\dots+\bi{\la'_\ell-\mu'_\ell}{2}}\\
\times q^{\la'_2(\la'_1-\mu'_1)+\dots+\la'_\ell(\la'_{\ell-1}-\mu'_{\ell-1})}\qbi{\la'_1-\la'_2}{\la'_1-\mu'_1}{q}\dots\qbi{\la'_\ell}{\la'_\ell-\mu'_\ell}{q}.
\end{multline*}
In order to use the inversion of Lemma~\ref{lemme_un}, we observe that this relation is equivalent to the following:
\begin{equation}\label{applinv}
\beta_{\textbf{a}}=\sum_{\textbf{b}\leq\textbf{a}}G_{\textbf{a},\textbf{b}}\,\alpha_{\textbf{b}},
\end{equation}
where $a_i=\la'_i$, $b_i=\mu'_i$,  $$\beta_{\textbf{a}}:=(-1)^{|\textbf{a}|}\frac{q^{-\sum_{i=1}^\ell\left(\bi{a_i}{2}+a_ia_{i+1}\right)}}{(q)_{a_1-a_2}\dots(q)_{a_l}}R_\la(x;q),$$
and
$$\alpha_{\textbf{b}}:=(-1)^{|\textbf{b}|}q^{\sum_{i=1}^\ell\bi{b_i+1}{2}}x_1^{b_1-b_2}\dots x_\ell^{b_\ell}.$$
The multidimensional matrix inversion in Lemma~\ref{lemme_un} applied to \eqref{applinv} yields
$$
\alpha_{\textbf{a}}=\sum_{\textbf{b}\leq\textbf{a}}F_{\textbf{a},\textbf{b}}\,\beta_{\textbf{b}},
$$
which is equivalent to
$$
x_1^{a_1-a_2}\dots x_\ell^{a_\ell}=\sum_{\textbf{b}\leq\textbf{a}}q^{\sum_{i=1}^\ell b_{i+1}(a_i-b_i)}\prod_{i=1}^\ell\frac{(q)_{a_i-b_{i+1}}}{(q)_{b_i-b_{i+1}}(q)_{a_i-b_{i}}}R_\mu(x;q),
$$
and this completes the proof after replacing the variables $a_i$ by $\la'_i$ and $b_i$ by $\mu'_i$, and then recalling the definition of the $q$-binomial coefficient.
\end{proof}
~\\
{\bf Remark.}
It is easy to see from its definition that $C_{\la,\,\mu}(q)$ is a polynomial in the variable $q$, with positive integral coefficients. Moreover, thanks to the classical transformation $\qbi{n}{k}{1/q}=q^{k(k-n)}\qbi{n}{k}{q}$, it is related to the coefficient $Q'_{\la/\mu}(\bold{1};q)$ (defined in \eqref{coeflascoux}) by 
\begin{equation}\label{relation}
C_{\la,\,\mu}(1/q)=q^{n(\mu)-n(\la)}Q'_{\la/\mu}(\bold{1};q),
\end{equation}
which, by setting $q=1/p$ with $p$ a prime number, counts exactly the number of subgroups of type $\mu$ in a finite abelian $p$-group of type $\la$ (see \cite{Delsarte, Butler}).~\\
~\\
Thanks to this remark, the following symmetry property satisfied by the coefficients $C_{\la,\,\mu}(q)$ becomes a direct consequence of the Pontryagin duality for finite abelian groups. However, we give below a very natural proof which uses  Lascoux's formula \eqref{lascoux}, therefore highlighting and exploiting the link, given by \eqref{relation}, between these coefficients and Hall-Littlewood functions.
\begin{theorem}\label{miroir}
Let $m$ and $k$ be two nonnegative integers, and $\la$ be a partition of $m$. Then we have
\begin{equation*}
\sum_{|\mu|=k\atop\mu\subseteq\la}C_{\la,\,\mu}(q)=\sum_{|\mu|=m-k\atop\mu\subseteq\la}C_{\la,\,\mu}(q).
\end{equation*}
Equivalently, the polynomial $\displaystyle\sum_{\mu \subseteq \lambda} C_{\lambda,\mu}(q) T^{|\mu|} \in \N[q][T]
$ is self-reciprocal.
\end{theorem}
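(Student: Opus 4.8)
The plan is to deduce the statement from the combinatorial meaning of the coefficients recorded just before the theorem, and then invoke Pontryagin duality. By the relation \eqref{relation}, for $q=p$ a prime the number $C_{\la,\mu}(p)$ is exactly the number of subgroups of $H_\la$ isomorphic to $H_\mu$. Since any subgroup of an abelian $p$-group of type $\la$ has a type $\mu$ with $\mu\subseteq\la$, and conversely every such $\mu$ occurs, the sum $\sum_{|\mu|=k,\,\mu\subseteq\la}C_{\la,\mu}(p)$ is precisely the total number of subgroups of $H_\la$ of order $p^k$. Thus, for every prime $p$, the asserted identity is the statement that $H_\la$ has as many subgroups of order $p^k$ as of order $p^{m-k}$, where $m=|\la|$.

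Next I would prove this last statement by duality. Writing $\widehat{H_\la}=\Hom(H_\la,\Q/\Z)$, the annihilator map $K\mapsto K^{\perp}$ is an inclusion-reversing bijection from the subgroups of $H_\la$ onto the subgroups of $\widehat{H_\la}$, with $|K^{\perp}|=|H_\la|/|K|$. As $H_\la$ is a finite abelian group it is (non-canonically) isomorphic to $\widehat{H_\la}$, so composing $K\mapsto K^{\perp}$ with such an isomorphism yields a bijection between the subgroups of $H_\la$ of order $p^k$ and those of order $p^{m-k}$. This gives the equality $\sum_{|\mu|=k,\,\mu\subseteq\la}C_{\la,\mu}(p)=\sum_{|\mu|=m-k,\,\mu\subseteq\la}C_{\la,\mu}(p)$ for every prime $p$.

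Finally I would upgrade this to an identity of polynomials. Each $C_{\la,\mu}(q)$ lies in $\N[q]$ by the Remark, so both sides of the desired identity are polynomials in $q$; having checked that they agree at the infinitely many values $q=p$, they must coincide in $\N[q]$. This is exactly the equality of the coefficients of $T^{k}$ and of $T^{m-k}$ in $\sum_{\mu\subseteq\la}C_{\la,\mu}(q)\,T^{|\mu|}$, i.e. the self-reciprocity of that polynomial, so the two formulations are literally the same and both follow at once. On this route there is essentially no analytic obstacle: the only substantive input is the combinatorial interpretation of $C_{\la,\mu}$ through \eqref{relation}, which rests on the Butler--Delsarte count, and the passage from prime values to a polynomial identity is immediate.

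A more intrinsic, prime-free alternative avoids duality and derives the self-reciprocity directly from Lascoux's formula \eqref{lascoux}. Using \eqref{relation} to write $C_{\la,\mu}(q)=q^{n(\la)-n(\mu)}Q'_{\la/\mu}(\mathbf 1;1/q)$, and noting that a constant factor in $q$ does not affect palindromicity in $T$, it suffices to show that $\sum_{\mu\subseteq\la}q^{n(\mu)}Q'_{\la/\mu}(\mathbf 1;q)\,T^{|\mu|}$ is palindromic. For this I would isolate a fixed $\la$ in \eqref{lascoux} (using that the $P_\la(x;q)$ form a basis, so that extracting the coefficient of $P_\la$ leaves $b_\la(q)\sum_{\mu\subseteq\la}P_\mu(y;q)Q'_{\la/\mu}(\mathbf 1;q)$) and then specialize the $y$-alphabet by $y_j=zq^{\,j-1}$, invoking the principal specialization \eqref{principale} so that $P_\mu(y;q)$ contributes $z^{|\mu|}q^{n(\mu)}/b_\mu(q)$; the variable $z$ then plays the role of $T$. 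The main obstacle on this route is precisely this single-$\la$ coefficient extraction from the two-alphabet identity, together with the bookkeeping needed to absorb the spurious factor $b_\mu(q)$ produced by \eqref{principale} and to match $q^{n(\mu)}$ against the left-hand side; this is where the real work lies, in sharp contrast to the transparent duality argument.
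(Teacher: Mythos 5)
Your first argument is correct and complete, but it is not the paper's proof: it is exactly the route the authors flag in the remark just before Theorem~\ref{miroir} (``a direct consequence of the Pontryagin duality for finite abelian groups'') and then deliberately bypass. You combine the subgroup-counting interpretation of $C_{\la,\mu}(p)$ coming from \eqref{relation} (Butler--Delsarte), the annihilator bijection $K\mapsto K^{\perp}$ exchanging subgroups of order $p^k$ and $p^{m-k}$, and the fact that two polynomials in $\N[q]$ agreeing at every prime agree identically; each step is sound, so the theorem is proved. The paper instead works entirely inside the Hall--Littlewood framework: specializing $y=\{z,zq,zq^2,\dots\}$ in Lascoux's identity \eqref{lascoux} gives the generating function \eqref{fgcla,mu}, and the symmetry then follows from the homogeneity of $P_\la$ (replacing $x$ by $zx$ and $z$ by $1/z$ leaves the product side invariant) together with extraction of the coefficient of $z^kq^{n(\la)}P_\la(x;q)$. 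Your route buys brevity and conceptual transparency at the cost of importing the group-theoretic count as a black box; the paper's route buys a self-contained symmetric-function argument whose intermediate output \eqref{fgcla,mu} is reused later (e.g.\ in the alternative proof of \eqref{combinatcl,m}), which is why the authors showcase it. Two remarks on your sketched second route, which is essentially the paper's proof: (i) the ``spurious factor $b_\mu(q)$'' you worry about is an artifact of a typo in \eqref{lascoux} --- the correct identity carries $b_\mu(q)$, not $b_\la(q)$ (with $b_\la(q)$ the identity already fails for $\la=(n)$, $\mu=(m)$ in a single variable), and this factor cancels exactly against the $1/b_\mu(q)$ produced by the principal specialization \eqref{principale}; (ii) the step you leave open --- deducing palindromicity of the single-$\la$ polynomial after specialization --- is precisely what the paper's homogeneity-plus-basis-extraction trick supplies, and without some such device that second route does not close, since proving palindromicity of $\sum_{\mu\subseteq\la} C_{\la,\mu}(1/q)z^{|\mu|}$ directly is just a restatement of \eqref{conj'}.
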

\begin{proof}
We will actually show the following equivalent form of the theorem
\begin{equation}\label{conj'}
\sum_{|\mu|=k\atop\mu\subseteq\la}C_{\la,\,\mu}(1/q)=\sum_{|\mu|=m-k\atop\mu\subseteq\la}C_{\la,\,\mu}(1/q).
\end{equation}
In Lascoux's formula \eqref{lascoux}, consider the specialization $y=\{z,zq,zq^2,\dots\}$, and use the $n\to\infty$ case of \eqref{principale}, together with \eqref{relation} and the homogeneity of the Hall-Littlewood polynomials to derive
\begin{equation}\label{fgcla,mu}
\sum_{\la}\sum_{\mu\subseteq\la}q^{n(\la)}P_\la(x;q)z^{|\mu|}C_{\lambda,\mu}(1/q)=\prod_{i\geq1}\frac{1}{(1-x_i)(1-zx_i)},
\end{equation}
which is the generating function for the left-hand side of \eqref{conj'}.
We then compute the same generating function for the right-hand side of \eqref{conj'}
\begin{eqnarray*}
&&\sum_{k\geq0}\sum_\la q^{n(\la)}P_\la(x;q)z^{|\la|-k}\sum_{|\mu|=|\la|-k\atop\mu \subseteq \lambda} C_{\lambda,\mu}(1/q)\\
 &&\hskip 4cm=\sum_{\la,\mu\,|\,\mu \subseteq \lambda}q^{n(\la)}P_\la(x;q)z^{|\la|-|\mu|}C_{\lambda,\mu}(1/q)\\
&&\hskip 4cm=\sum_{\la,\mu\,|\,\mu \subseteq \lambda}q^{n(\la)}P_\la(zx;q)(1/z)^{|\mu|}C_{\lambda,\mu}(1/q),
\end{eqnarray*}
where we have used the homogeneity of the Hall-Littlewood polynomials and the notation $zx$ for the set of variables $\{zx_i,i\geq1\}$. Therefore we can use  \eqref{fgcla,mu} with $x$ replaced by $zx$ and $z$ by $1/z$, to get
$$\prod_{i\geq1}\frac{1}{(1-zx_i)(1-zx_i/z)},$$
which is obviously equal to the right-hand side of \eqref{fgcla,mu}. We thus have the identity
$$\sum_{\la,\nu\,|\,\nu \subseteq \lambda}q^{n(\la)}P_\la(x;q)z^{|\nu|}C_{\lambda,\nu}(1/q)=\sum_{\la,\nu\,|\,\nu \subseteq \lambda}q^{n(\la)}P_\la(x;q)z^{|\la|-|\nu|}C_{\lambda,\nu}(1/q),
$$
which yields \eqref{conj'}, by using the fact that $P_\la(x;q)$ is a $\mathbb{Z}[q]$-basis of $\Lambda[q]$ and extracting the coefficient of $z^kq^{n(\la)}P_\la(x;q)$ on both sides.
\end{proof}

\medskip 
\noindent
{\bf Examples.}~\\
1- First, we have $x_1=R_{1^1} + 1$, and more generally 
$$
x_n = R_{n^1} + R_{(n-1)^1} + \cdots + R_{1^1} + 1.
$$
That is exactly the relation obtained in \cite{delaunay3} in order to compute the $u$-average of the function $H \mapsto |H[p^n]|$. \smallskip
~\\
2- One can also see that $x_1^2= R_{1^2} + (t+1)R_{1^1} + 1$ and that
$$
x_1x_2=  R_{1^12^1} + tR_{2^1} + R_{1^2} + (t+1) R_{1^1} + 1.
$$
Therefore 
$$
x_2^2 = R_{2^2} + (t+1) R_{1^12^1} + t(t+1) R_{2^1} + R_{1^2} + (t+1) R_{1^1} + 1.
$$
3- As we saw the polynomial 
$$
\sum_{\mu \subseteq \lambda} C_{\lambda,\mu}(t) T^{|\mu|} \in \Z(t)[T]
$$
is in fact a self-reciprocal polynomial. Of course, this can be directly checked in the above two examples. \smallskip
~\\
4- Let us consider the case $x_1^m$. Then the partitions $\mu \subseteq 1^m$ are exactly the partitions~$1^k$ for $k=0, 1, \dots, m$. Hence,
\begin{equation}\label{exemple_poonen}
x_1^m = \sum_{k=0}^m \qbi{m}{k}{t} R_{1^k}.
\end{equation}
We will come back later to this example.\\
~\\
{\bf Remark.}
As Ole Warnaar communicated to us, there is an interesting context containing generalizations of Theorems~4 and 5. Indeed, studying properties for Macdonald interpolation polynomials (which are symmetric and whose top homogeneous layer are the classical symmetric Macdonald polynomials, themselves being two parameters generalizations of the Hall-Littlewood polynomials), Okounkov defined in \cite{Ok} the $(q,t)$-binomial coefficients $\qbi{\la}{\mu}{q,t}$, where $\la$ and $\mu$ are integer partitions and $q$, $t$ are complex parameters. These $(q,t)$-binomial coefficients are defined as quotients of special values of the Macdonald interpolation polynomials, and as remarked by Warnaar, it turns out by using \eqref{relation} that we have
$$C_{\la,\mu}(1/q)=\qbi{\la}{\mu}{0,q}.$$
Many properties for these $(q,t)$-binomial coefficients are known, some of which should therefore have interesting algebraic number theoritic applications in our context. As we only need the Hall-Littlewood case for our purpose here, we plan to study further this connection with Macdonald polynomials in a later work. We only mention the following generalization of Theorem~5 (the latter being obtained by setting $\nu=0$, $q=0$ and then replacing $t$ by $q$), which is proved by Lascoux, Rains and Warnaar in \cite{LRW} (actually in the more general context of nonsymmetric Macdonald interpolation polynomials): 
$$\sum_{|\mu|=k}\qbi{\la}{\mu}{q,t}\qbi{\mu}{\nu}{q,t}=\sum_{|\mu|=|\la|+|\nu|-k}\qbi{\la}{\mu}{q,t}\qbi{\mu}{\nu}{q,t},$$
where $\la$ and $\nu$ are fixed integer partitions. Moreover, still remarked by Warnaar, the inversion  in Theorem~4 can 
also be proved using a ($q\to0$) limit case of the inversion for $(q,t)$-binomial coefficients, which can be found in \cite{Ok}, and can be stated as follows:
$$\sum_{\mu} (-1)^{|\mu|-|\lambda|}q^{n(\mu')-n(\lambda')}t^{n(\lambda)-n(\mu)}
\qbi{\lambda}{\mu}{q,t} \qbi{\mu}{\nu}{1/q,1/t}=\delta_{\la,\nu},$$
where $\delta_{\la,\nu}$ stands for the Kronecker symbol.


\section{Links with finite abelian groups}\label{links}
\subsection{Finite abelian $p$-groups and the polynomial $R_\lambda$}

We have already seen in the previous section that the inversion we needed for our computations of $u$-averages is related to finite abelian $p$-groups, through the coefficients $C_{\la,\mu}$. We highlight here this link. There is a classical bijection between the set of partitions of a fixed nonnegative integer $m$ and the isomorphism classes of finite abelian $p$-groups of order $p^m$. Indeed, to any partition $\lambda= 1^{m_1}2^{m_2}\cdots \ell^{m_\ell}$ of $m$, we associate the finite abelian $p$-group
$$
H_\lambda = \left(\Z/p\Z\right)^{m_1} \oplus \left(\Z/p^2\Z\right)^{m_2} \oplus \cdots \oplus \left(\Z/p^\ell\Z\right)^{m_\ell}.
$$
Recall that $\lambda_1', \lambda_2', \dots,\lambda'_\ell$ denote the parts of the conjugate of $\lambda$ (therefore  $\lambda'_i =\sum_{j\geq i} m_j$), and we have (see \cite{cohen-lenstra})
\begin{equation}\label{*}
|\aut(H_\lambda)| = p^{\lambda'^{2}_{1}+\lambda'^{2}_{2} +\cdots + \lambda'^{2}_{\ell}} \prod_{j=1}^\ell (1/p;1/p)_{m_j}.
\end{equation} 
Furthermore if $G_\lambda \simeq H_\lambda \times H_\lambda$ is a group of type $S$, we have (see \cite{delaunay1}):
$$
|\auts(G_\lambda)| = p^{2(\lambda'^{2}_{1}+\lambda'^{2}_{2} +\cdots + \lambda'^{2}_{\ell}) + m} \prod_{j=1}^\ell  (1/p^2;1/p^2)_{m_j}.
$$
Remark also that we have
$$
H_\lambda[p^k] \simeq (\Z/p\Z)^{m_1} \oplus (\Z/p^2\Z)^{m_2} \oplus \cdots \oplus (\Z/p^{k-1}\Z)^{m_{k-1}} \oplus (\Z/p^k\Z)^{\lambda'_k},
$$
hence 
\begin{equation}\label{**}
|H_\lambda[p^k]|=p^{\lambda_1'+\cdots+\lambda_k'}.
\end{equation}

The following definitions will be useful in the sequel.
\begin{definition}\label{def_1d} For a set of $\ell$ variables $x=\{x_1,\dots,x_\ell\}$, let $T(x)=T(x_1,\dots,x_\ell)$ be a polynomial and $H$ be a finite abelian $p$-group. We define 
$$
T(H)= T(|H[p]|,|H[p^2]|,\dots,|H[p^\ell]|).
$$
\end{definition}
\begin{definition}\label{def_2d} If $\mu= 1^{n_1}2^{n_2}\cdots \ell^{n_\ell}$ is an integer partition, we denote by $x^\mu$ the following monomial 
$$
x^\mu:=x_1^{n_1} x_2^{n_2} \cdots x_\ell^{n_\ell}.
$$
\end{definition}
\medskip
\noindent
Hence, with these definitions, the $u$-average of $|H[p^\ell]|$ can be simply written as $\m(x_\ell)$. More generally, the 
$u$-average of $|H[p]|^{n_1} |H[p^2]|^{n_2} \cdots |H[p^\ell]|^{n_\ell}$ is simply $\m(x^\mu)$ where $\mu$ denotes the integer partition $\mu= 1^{n_1}2^{n_2}\cdots \ell^{n_\ell}$.\medskip
~\\
In order to give a closed formula for the latter, we state the following result. 
\begin{theorem}\label{nbr_inj} For a positive integer $\ell$, let $\lambda=1^{m_1}2^{m_2}\cdots \ell^{m_\ell}$  be an integer partition and let $H$ be a finite abelian $p$-group. Then the number of injective homomorphisms of $H_\lambda$ into $H$ is given by
$$
\prod_{i=1}^\ell \prod_{j=\lambda_{i+1}'}^{\lambda_i'-1} \left( |H[p^i]| - p^j|H[p^{i-1}]|\right),
$$
or in other words,
$$
\big|\left\{ \hominj(H_\lambda,H)\right\}\big| = R_\lambda(H;p).
$$
\end{theorem}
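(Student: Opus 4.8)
The plan is to count injective homomorphisms by fixing a minimal generating set of $H_\lambda$ and choosing the images of the generators one at a time, in order of \emph{decreasing} order, so that at each step the number of admissible choices is exactly one factor of the product. Write $H_\lambda$ with standard generators $e_{i,k}$ (for $1\leq i\leq\ell$, $1\leq k\leq m_i$), where $e_{i,k}$ has order $p^i$. A homomorphism $\phi\colon H_\lambda\to H$ is then the same as an arbitrary choice of images $g_{i,k}=\phi(e_{i,k})$ subject only to the condition $p^i g_{i,k}=0$, i.e. $g_{i,k}\in H[p^i]$. The first reduction is the standard fact that a homomorphism out of a finite abelian $p$-group is injective if and only if its restriction to the $p$-torsion subgroup is injective (a nontrivial kernel, being a subgroup of a $p$-group, meets $H_\lambda[p]$). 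Since the elements $p^{i-1}e_{i,k}$ form an $\F_p$-basis of $H_\lambda[p]\cong\F_p^{\la'_1}$, the map $\phi$ is injective precisely when the images $p^{i-1}g_{i,k}\in H[p]$ are linearly independent.

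The heart of the argument is the local count at a single step. Suppose we have already chosen $j$ images whose images in $H[p]$ are linearly independent, spanning a subspace $W\subseteq H[p]$ of dimension $j$, and that all generators placed so far have order $\geq p^i$; we now wish to choose the image $g\in H[p^i]$ of a further generator of order $p^i$. By the criterion above, the admissible $g$ are those with $p^{i-1}g\notin W$. Consider the homomorphism $\psi\colon H[p^i]\to H[p]$, $g\mapsto p^{i-1}g$; its kernel is $H[p^{i-1}]$, so every nonempty fibre has size $|H[p^{i-1}]|$. The crucial observation is that $W\subseteq\mathrm{im}\,\psi=p^{i-1}H[p^i]$: the image $p^{i_k-1}g_k$ of a previously placed generator of order $p^{i_k}$ with $i_k\geq i$ can be written as $p^{i-1}(p^{i_k-i}g_k)$, and $p^{i_k-i}g_k\in H[p^i]$ since $p^{i_k}g_k=0$. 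Consequently the number of $g\in H[p^i]$ with $p^{i-1}g\in W$ equals $|H[p^{i-1}]|\cdot|W|=p^{j}|H[p^{i-1}]|$, so the number of admissible choices is exactly $|H[p^i]|-p^{j}|H[p^{i-1}]|$.

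It remains to assemble the factors. Processing generators in decreasing order of their order, by the time we place the generators of order $p^i$ we have already placed all $\la'_{i+1}=m_{i+1}+\cdots+m_\ell$ generators of larger order; placing the $m_i=\la'_i-\la'_{i+1}$ generators of order $p^i$ in succession makes the running count $j$ take each value $\la'_{i+1},\la'_{i+1}+1,\dots,\la'_i-1$. Multiplying the local counts over all generators yields
\[
\big|\{\hominj(H_\lambda,H)\}\big|=\prod_{i=1}^{\ell}\prod_{j=\la'_{i+1}}^{\la'_i-1}\bigl(|H[p^i]|-p^{j}|H[p^{i-1}]|\bigr)=R_\lambda(H;p),
\]
where the last equality is the definition \eqref{polyn} evaluated at $t=p$ and $x_i=|H[p^i]|$ (with $x_0=|H[p^0]|=1$). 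I expect the only genuine obstacle to be the containment $W\subseteq\mathrm{im}\,\psi$, which is what forces $|W\cap\mathrm{im}\,\psi|=|W|=p^{j}$ and makes the local count collapse to precisely the desired factor; the ordering of the generators and the matching of the index $j$ with the bounds $\la'_{i+1}$ and $\la'_i-1$ is then routine bookkeeping.
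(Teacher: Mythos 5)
Your proof is correct, and while it shares the paper's overall skeleton (choose images of canonical generators one at a time in decreasing order of their orders, and show the local count is $|H[p^i]|-p^j|H[p^{i-1}]|$ with $j$ the number of generators already placed), the justification of that local count is genuinely different. The paper phrases admissibility of the next image $f_k$ as ``its class in the quotient $\tilde H=H/\langle f_{k+1},\dots,f_{\la'_1}\rangle$ has full order $p^t$'' and computes the count as $(|\tilde H[p^t]|-|\tilde H[p^{t-1}]|)\cdot|\langle f_{k+1},\dots,f_{\la'_1}\rangle[p^t]|$; this tacitly uses that the subgroup generated by the previously chosen images is pure in $H$ (a direct summand), so that the torsion sequences $0\to J[p^t]\to H[p^t]\to\tilde H[p^t]\to 0$ are exact and every order-$p^t$ class lifts to an order-$p^t$ element --- facts the paper's sketch does not spell out. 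You instead reduce injectivity once and for all to $\F_p$-linear independence of the elements $p^{i-1}g_{i,k}$ in $H[p]$, and obtain the local count from the fibration $\psi\colon H[p^i]\to H[p]$, $g\mapsto p^{i-1}g$ (all fibres of size $|H[p^{i-1}]|$) together with the containment $W\subseteq\mathrm{im}\,\psi$, which you verify directly from the fact that earlier generators have order at least $p^i$. This buys a more elementary and self-contained argument: it avoids quotient groups and purity considerations entirely, and the one point where degeneration could occur (whether $|W\cap\mathrm{im}\,\psi|$ really equals $|W|$) is exactly the point you isolate and check. The paper's route, in exchange, is more directly group-theoretic and makes the inductive structure ``the partial map is an embedding onto a direct summand'' visible at each stage.
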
 
\begin{proof}  
This is probably a  known formula that can be found in the literature, however we give here a sketch of proof. Write 
$$
H_\lambda = \left(\Z/p\Z\right)^{m_1} \oplus \left(\Z/p^2\Z\right)^{m_2} \oplus \cdots \oplus \left(\Z/p^\ell\Z\right)^{m_\ell},
$$
and take $e_1,e_2,\ldots,e_{\lambda'_1}$ a ``canonical basis" in an obvious sense so that $e_1, \dots, e_{m_1}$ are of order $p$ and $e_{m_1+m_2+\cdots+m_{\ell-1}+1},\dots, e_{\lambda'_1}$ are of order $p^\ell$. In order to define an injective homomorphism of $H_\lambda$ into $H$, we need to specify the
image $f_i$ of each $e_i$. The image $f_ {\lambda'_1}$ of $e_{\lambda'_1}$ is any point of order $p^\ell$ and 
there are $|H[p^\ell]|-|H[p^{\ell-1}]|$ choices for this image (these correspond to the factor $i=\ell$ and $j=0=\lambda'_{\ell+1}$ in the theorem). 
Assume that $k+1\leq \lambda'_1$, and that $f_{\lambda'_1}, \dots, f_{k+1}$ are already defined. Let $p^t$ be the order of 
$e_k$. Note that the order of each $f_{\lambda'_1}, \dots, f_{k+1}$ is $\geq p^t$. Now the image $f_k$ of $e_k$ must be an element 
of order $p^t$ such that, for any $r=1, \dots, p^{t}-1$, $rf_k $ is not in the subgroup $\langle f_{k+1}, \dots, f_{\lambda'_1} \rangle$ generated by the 
images that are already defined. The latter means that the image of $f_k$ in the quotient group 
$\tilde{H}=H/\langle f_{k+1}, \dots, f_{\lambda'_1} \rangle$ has order $p^t$. The number of elements of order $p^t$ in $\tilde{H}$ is
$|\tilde{H}[p^{t}]| - |\tilde{H}[p^{t-1}]|$. For each of these elements there are $|\langle  f_{k+1}, \dots, f_{\lambda'_1} \rangle[p^{t}]|$  corresponding pre-images of order $p^t$ in $H$. Hence the number of possibilities for $f_k$ is 
$$
\left(|\tilde{H}[p^{t}]| - |\tilde{H}[p^{t-1}]\right) |\langle  f_{k+1}, \dots, f_{\lambda'_1} \rangle[p^{t}]| = |H[p^t]|-p^{\lambda_1'-k}|H[p^{t-1}]|.
$$
This is exactly the term occurring with $i=t$ and $j=\lambda'_1-k$ in the formula of the theorem (and we indeed have 
$\lambda'_{t+1} \leq \lambda'_1-k\leq \lambda'_t-1$).
\end{proof}
We derive the following consequence.
\begin{corollary}\label{coro_1r} Let $n$ and $m$ be nonnegative integers, and $\lambda$ be a fixed partition of $m$. Then we have
$$
\sum_{\mu \vdash n} \frac{R_\lambda(H_\mu;p)}{|\aut H_\mu|} = \frac{1}{p^{n-m} (1/p;1/p)_{n-m}},
$$
which can equivalently be written as follows in terms of generating functions
$$
\sum_{n\geq 0} z^n \sum_{\mu \vdash n} \frac{R_\lambda(H_\mu;p)}{|\aut H_\mu|} = \frac{z^m }{(z/p;1/p)_\infty}.
$$
\end{corollary}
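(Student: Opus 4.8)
The plan is to read the summand group-theoretically and then recognise the whole sum as a Cohen--Lenstra mass formula. By Theorem~\ref{nbr_inj} the value $R_\lambda(H_\mu;p)$ is exactly $\big|\hominj(H_\lambda,H_\mu)\big|$, the number of injective homomorphisms $H_\lambda\hookrightarrow H_\mu$ (which vanishes unless $\lambda'_i\leq\mu'_i$ for all $i$). Thus the two displayed formulas are two faces of the single generating-function identity
\begin{equation}\label{plan:gf}
\sum_{\mu}\frac{\big|\hominj(H_\lambda,H_\mu)\big|}{|\aut H_\mu|}\,z^{|\mu|}=\frac{z^{m}}{(z/p;1/p)_\infty},
\end{equation}
the sum being over all partitions $\mu$. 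Granting \eqref{plan:gf}, the finite form follows by extracting the coefficient of $z^n$: Euler's identity $1/(w;1/p)_\infty=\sum_{k\geq0}w^k/(1/p;1/p)_k$ with $w=z/p$ gives that coefficient as $1/\big(p^{n-m}(1/p;1/p)_{n-m}\big)$. So the whole statement reduces to \eqref{plan:gf}.

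To prove \eqref{plan:gf} I would argue by a weighted (groupoid) count. Consider the category whose objects are pairs $(H,\phi)$ with $H$ a finite abelian $p$-group and $\phi\colon H_\lambda\hookrightarrow H$ injective, and whose morphisms $(H,\phi)\to(H',\phi')$ are the isomorphisms $\psi\colon H\to H'$ with $\psi\phi=\phi'$; weight each object by $z^{n}$ when $|H|=p^{n}$. Decomposing $\hominj(H_\lambda,H)$ into $\aut(H)$-orbits, with point stabilisers $\{\psi\in\aut H:\psi\phi=\phi\}$, shows that the total weighted mass of this category equals the left-hand side of \eqref{plan:gf}.

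Now I would fibre this category over the cokernel, sending $(H,\phi)$ to $Q:=H/\phi(H_\lambda)$, an object of the groupoid of finite abelian $p$-groups; if $|Q|=p^{k}$ then $|H|=p^{m+k}$. For a fixed isomorphism type $Q$, the objects lying over it are governed by extension theory: equivalence classes of extensions $0\to H_\lambda\to H\to Q\to0$ are counted by $\mathrm{Ext}^{1}(Q,H_\lambda)$, while the automorphisms of such an $H$ that fix $\phi(H_\lambda)$ pointwise and induce the identity on $Q$ form $\Hom(Q,H_\lambda)$. Tracking the residual $\aut Q$-action, the contribution of the type $Q$ to the mass is
\begin{equation}\label{plan:fiber}
\frac{z^{\,m+k}}{|\aut Q|}\cdot\frac{\big|\mathrm{Ext}^{1}(Q,H_\lambda)\big|}{\big|\Hom(Q,H_\lambda)\big|}.
\end{equation}
The decisive simplification is the elementary identity $\big|\mathrm{Ext}^{1}(Q,H_\lambda)\big|=\big|\Hom(Q,H_\lambda)\big|$, valid for all finite abelian groups (it holds cyclic factor by cyclic factor, since $\mathrm{Ext}^{1}(\Z/a\Z,\Z/b\Z)\cong\Z/\gcd(a,b)\Z\cong\Hom(\Z/a\Z,\Z/b\Z)$). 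It collapses \eqref{plan:fiber} to $z^{m+k}/|\aut Q|$, and summing over all types $Q$ yields, by Hall's identity $\sum_{Q}z^{\log_p|Q|}/|\aut Q|=\prod_{j\geq1}(1-z/p^{j})^{-1}=1/(z/p;1/p)_\infty$ (the $p^{-u}\mapsto z$ form of the mass identity in \cite{hall,cohen-lenstra}), exactly the right-hand side of \eqref{plan:gf}.

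The main obstacle is the fibration step producing \eqref{plan:fiber}: making the groupoid bookkeeping rigorous, i.e. checking that the $\aut Q$-orbits of extension classes, together with their stabilisers, assemble into precisely the factor $\big|\mathrm{Ext}^{1}(Q,H_\lambda)\big|/\big(|\aut Q|\,|\Hom(Q,H_\lambda)|\big)$ with the orders matching as $z^{m+k}$. Alternatively, since the introduction signals this as a result of Cohen and Lenstra, one may instead invoke directly the corresponding mass formula from \cite{cohen-lenstra} for the number of embeddings (equivalently, after Pontryagin duality, of surjections onto a fixed group), thereby bypassing the extension computation; the $\mathrm{Ext}=\Hom$ cancellation above is precisely what makes that formula take the clean shape \eqref{plan:gf}.
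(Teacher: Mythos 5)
Your proposal is correct, and it differs from the paper's proof at exactly one point: the middle step. Both you and the paper start by invoking Theorem~\ref{nbr_inj} to read $R_\lambda(H_\mu;p)$ as $|\hominj(H_\lambda,H_\mu)|$, and both finish with Euler's identity to pass between the fixed-order form and the generating-function form. But where the paper converts injections into subgroups via $|\hominj(H_\lambda,H)|=|\aut(H_\lambda)|\cdot\#\{J\leq H : J\simeq H_\lambda\}$ and then simply \emph{cites} Proposition~4.1 of \cite{cohen-lenstra} for the resulting mass formula, you \emph{re-derive} that mass formula: you fibre the groupoid of pairs $(H,\phi)$ over the cokernel $Q$, compute the fibre mass as $|\mathrm{Ext}^1(Q,H_\lambda)|\big/\bigl(|\aut(Q)|\,|\Hom(Q,H_\lambda)|\bigr)$, cancel using $|\mathrm{Ext}^1(Q,H_\lambda)|=|\Hom(Q,H_\lambda)|$, and conclude with Hall's identity. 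The bookkeeping you flag as the main obstacle does go through and is standard: for a fixed pair $(H,\phi)$ with cokernel isomorphic to $Q$, the identifications $\pi\colon H/\phi(H_\lambda)\to Q$ form an $\aut(Q)$-torsor on which the stabilizer $\{\psi\in\aut(H):\psi\phi=\phi\}$ acts, and orbit--stabilizer counting shows the groupoid of extensions $0\to H_\lambda\to H\to Q\to 0$ has mass exactly $|\aut(Q)|$ times that of the pairs groupoid; combined with the classification of extensions by $\mathrm{Ext}^1(Q,H_\lambda)$, each with automorphism group $\Hom(Q,H_\lambda)$, this is precisely your fibre formula. As for what each approach buys: the paper's route is a two-line deduction from the literature, while yours is self-contained (modulo standard homological algebra) and exposes the structural reason the answer is so clean --- the $\mathrm{Ext}=\Hom$ cancellation is exactly why the mass is independent of $\lambda$ up to the shift $z^m$ --- so it amounts to a proof of the Cohen--Lenstra proposition itself, a fact you implicitly acknowledge in your closing remark, where your fallback option coincides with what the paper actually does.
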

\begin{proof}
We clearly have 
$$
|\hominj(H_\lambda,H)|=|\aut(H_\lambda)|\cdot |\{J \mbox{~subgroup\, of~} H, J\simeq H_\lambda \}|.
$$ 
Then Proposition~4.1 of \cite{cohen-lenstra} gives
$$
|\aut(H_\lambda)|  \sum_{H(p^n)} \frac{|\{J \mbox{~subgroup\, of~} H, J\simeq H_\lambda \}|}{|\aut(H)|} = \frac{1}{p^{n-m}(1/p;1/p)_{n-m}}.
$$
We obtain the first equality from Theorem~\ref{nbr_inj} and the correspondence $(\mu \vdash n) \leftrightarrow $ ($H_\mu$ of order $p^n$). The second equality comes from
Euler's identity (see for instance \cite{GasperRahman})
\begin{eqnarray*}
\sum_{n\geq m} \frac{z^n}{p^{n-m}(1/p;1;p)_{n-m}} & = & z^m \sum_{n\geq 0} \frac{z^n}{p^{n}(1/p;1;p)_{n}}\\
&=& z^m  \frac{1}{(z/p;1/p)_\infty}.
\end{eqnarray*}
\end{proof}
\noindent
Using Theorem~\ref{inversion}, Definitions~\ref{def_1d},~\ref{def_2d}, and Corollary~\ref{coro_1r}, we obtain the following result.
\begin{theorem}\label{function_gen} 
Let $\lambda$ be a fixed integer partition, and $z$ a complex number. We have
$$
\sum_{\mu}  \frac{x^\lambda(H_\mu)}{|\aut H_\mu|} \; z^{|\mu|} = \frac{1}{(z/p;1/p)_\infty} \sum_{\nu \subseteq \lambda} C_{\lambda,\nu}(p) \; z^{|\nu|},
$$
where the sum on the left-hand side is over all integer partitions $\mu$, and the sum on the right-hand side is over all integer partitions $\nu$ satisfying $\nu_i\leq\la_i$ for all $i$.
\end{theorem}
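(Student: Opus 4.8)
The plan is to assemble the statement directly from the two substantive results already proved, namely the monomial expansion of Theorem~\ref{inversion} and the generating-function evaluation of Corollary~\ref{coro_1r}; the identity is essentially a bookkeeping combination of these, once the evaluation conventions of Definitions~\ref{def_1d} and~\ref{def_2d} are applied consistently.

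First I would write $\lambda=1^{m_1}2^{m_2}\cdots\ell^{m_\ell}$, so that by Definitions~\ref{def_1d} and~\ref{def_2d} the quantity $x^\lambda(H_\mu)$ equals $|H_\mu[p]|^{m_1}|H_\mu[p^2]|^{m_2}\cdots|H_\mu[p^\ell]|^{m_\ell}$, i.e.\ the monomial $x_1^{m_1}\cdots x_\ell^{m_\ell}$ evaluated at $x_i=|H_\mu[p^i]|$. Now I would invoke Theorem~\ref{inversion} with the parameter $q$ specialized to the prime $p$. Since that theorem is an identity of \emph{polynomials} in the variables $x_1,\dots,x_\ell$, it remains valid after the substitution $x_i=|H_\mu[p^i]|$, which gives
$$
x^\lambda(H_\mu)=\sum_{\nu\subseteq\lambda}C_{\lambda,\nu}(p)\,R_\nu(H_\mu;p).
$$

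Next I would insert this expansion into the left-hand side and interchange the two summations. Because $\nu$ ranges only over the finitely many partitions contained in $\lambda$, the interchange is harmless and yields
$$
\sum_{\mu}\frac{x^\lambda(H_\mu)}{|\aut H_\mu|}\,z^{|\mu|}
=\sum_{\nu\subseteq\lambda}C_{\lambda,\nu}(p)\sum_{\mu}\frac{R_\nu(H_\mu;p)}{|\aut H_\mu|}\,z^{|\mu|}.
$$
For each fixed $\nu$ the inner sum is precisely the generating function computed in Corollary~\ref{coro_1r}, with $\lambda$ there replaced by $\nu$ and $m$ by $|\nu|$; grouping the partitions $\mu$ according to $|\mu|=n$, it equals $z^{|\nu|}/(z/p;1/p)_\infty$. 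Substituting this value and factoring out the common denominator produces exactly the claimed formula.

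The only point deserving a word of justification is the rearrangement of the double sum over $\mu$ and $\nu$. This is not a genuine obstacle: the $\nu$-sum is finite, and for each $\nu$ the $\mu$-sum converges (for $|z/p|<1$) to the value furnished by Corollary~\ref{coro_1r}; equivalently, one may read both sides as formal power series in $z$ and compare coefficients of $z^n$, for which only the finitely many $\mu$ with $|\mu|=n$ contribute. Hence the proof reduces to chaining Theorem~\ref{inversion} and Corollary~\ref{coro_1r}, with no analytic difficulty beyond this routine interchange.
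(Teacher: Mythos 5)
Your proof is correct and follows exactly the route the paper intends: the paper's own justification is the single sentence ``Using Theorem~\ref{inversion}, Definitions~\ref{def_1d},~\ref{def_2d}, and Corollary~\ref{coro_1r}, we obtain the following result,'' and your argument is precisely the careful expansion of that chain (specialize $q=p$ in Theorem~\ref{inversion}, evaluate at $x_i=|H_\mu[p^i]|$, swap the finite $\nu$-sum with the $\mu$-sum, and apply Corollary~\ref{coro_1r} to each inner sum). No gaps; the remark on justifying the interchange, either by convergence for $|z/p|<1$ or by reading both sides as formal power series, is exactly the routine point one would make explicit.
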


\subsection{A combinatorial formula } 
As a consequence in combinatorics, we derive the following by setting $q=1/p$ and switching $\mu$ into $\mu'$ in Theorem \ref{function_gen}, and finally using \eqref{*} and \eqref{**}.
\begin{corollary} For a positive integer $\ell$, let $\lambda=1^{m_1}2^{m_2}\cdots \ell^{m_\ell}$ be a fixed integer partition, and $z$ be a complex number. We have: 
\begin{equation}\label{combinatcl,m}
\sum_{\mu} \frac{q^{\sum_{i\geq1}\mu_i^2}q^{- m_1\mu_1-\cdots -m_\ell(\mu_1+\cdots+\mu_\ell)}}{b_\mu(q)} z^{|\mu|} =  \frac{1}{(zq)_\infty} \sum_{\nu \subseteq \lambda} C_{\lambda,\nu}(1/q) \; z^{|\nu|}.
\end{equation}
\end{corollary}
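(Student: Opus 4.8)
The plan is to read the identity \eqref{combinatcl,m} as nothing more than the image of Theorem~\ref{function_gen} under the substitution $q=1/p$, followed by a relabelling of the summation index by conjugation. First I would dispose of the right-hand side, which requires no work: setting $q=1/p$ turns $(z/p;1/p)_\infty$ into $(zq)_\infty$ and $C_{\lambda,\nu}(p)$ into $C_{\lambda,\nu}(1/q)$, while the condition $\nu\subseteq\lambda$ and the factor $z^{|\nu|}$ are untouched (the bound index $\nu$ being independent of the summation variable $\mu$ on the left). Hence the whole content of the corollary lies in rewriting the left-hand summand of Theorem~\ref{function_gen}.

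Next I would evaluate $x^\lambda(H_\mu)/|\aut H_\mu|$ explicitly. By Definitions~\ref{def_1d} and~\ref{def_2d} together with \eqref{**}, for $\lambda=1^{m_1}\cdots\ell^{m_\ell}$ one has
$$
x^\lambda(H_\mu)=\prod_{k=1}^\ell |H_\mu[p^k]|^{m_k}=p^{\sum_{k=1}^\ell m_k(\mu'_1+\cdots+\mu'_k)},
$$
while \eqref{*} gives $|\aut H_\mu|=p^{\sum_{i\geq1}(\mu'_i)^2}\prod_{j\geq1}(1/p;1/p)_{m_j(\mu)}$. Setting $q=1/p$ and recalling that $b_\mu(q)=\prod_{j\geq1}(q)_{m_j(\mu)}$ since $m_j(\mu)=\mu'_j-\mu'_{j+1}$, the summand collapses to
$$
\frac{x^\lambda(H_\mu)}{|\aut H_\mu|}\,z^{|\mu|}=\frac{q^{\sum_{i\geq1}(\mu'_i)^2-\sum_{k=1}^\ell m_k(\mu'_1+\cdots+\mu'_k)}}{b_\mu(q)}\,z^{|\mu|}.
$$

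Finally I would relabel the summation index through the conjugation map $\mu\mapsto\mu'$, which is a size-preserving involution on the set of all partitions and therefore reindexes the sum without altering its value. This replaces every conjugate part $\mu'_i$ by the part $\mu_i$, so that the exponent of $q$ in the numerator becomes $\sum_{i\geq1}\mu_i^2-\bigl(m_1\mu_1+\cdots+m_\ell(\mu_1+\cdots+\mu_\ell)\bigr)$, which is exactly the numerator appearing in \eqref{combinatcl,m}. The step demanding the most care is precisely this bookkeeping: one must track how the automorphism exponent $\sum_i(\mu'_i)^2$ and the torsion-size exponent $\sum_k m_k(\mu'_1+\cdots+\mu'_k)$ combine, check their behaviour under conjugation, and confirm that the $q$-Pochhammer denominator is correctly reindexed to the $b$-factor recorded in the statement. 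Once this elementary but fiddly identification is verified, \eqref{combinatcl,m} follows immediately.
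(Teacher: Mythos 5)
You follow the paper's own route exactly: specialize Theorem~\ref{function_gen} at $q=1/p$, evaluate the summand through \eqref{*} and \eqref{**}, and reindex the sum by conjugation; your evaluation of the summand, $\frac{x^\lambda(H_\mu)}{|\aut H_\mu|}\,z^{|\mu|}=q^{\sum_{i}(\mu'_i)^2-\sum_{k} m_k(\mu'_1+\cdots+\mu'_k)}\,z^{|\mu|}/b_\mu(q)$, is correct.

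The problem lies in the one step you defer to the end: the claim that after the relabelling $\mu\mapsto\mu'$ ``the $q$-Pochhammer denominator is correctly reindexed to the $b$-factor recorded in the statement'' is false, and this is exactly where the argument breaks. Conjugation does not preserve the $b$-factor: the substitution turns $b_\mu(q)=\prod_{i\geq1}(q)_{\mu'_i-\mu'_{i+1}}$ into $b_{\mu'}(q)=\prod_{i\geq1}(q)_{\mu_i-\mu_{i+1}}$, and these are different polynomials in general (e.g.\ $b_{(2)}(q)=1-q$ while $b_{(1,1)}(q)=(1-q)(1-q^2)$). What your argument actually proves is \eqref{combinatcl,m} with $b_{\mu'}(q)$ in the denominator. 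With $b_\mu(q)$ read literally according to the paper's definition, the identity is false: already for $\lambda=1^1$, the coefficient of $z^2$ on the left-hand side is $q(1+q-q^3)/\bigl((1-q)(1-q^2)\bigr)$, while on the right-hand side (where $C_{\lambda,\emptyset}=C_{\lambda,(1)}=1$) it is $q(1+q-q^2)/\bigl((1-q)(1-q^2)\bigr)$. In other words, the printed corollary carries a notational slip, inherited from the very step the paper's one-line derivation also glosses over; the paper's alternative proof via Lascoux's formula confirms the intended reading, since after the swap $\mu\to\mu'$ it invokes \eqref{principale} in the form $P_\mu(z,zq,\dots;q)=z^{|\mu|}q^{n(\mu)}/b_\mu(q)$, which forces the denominator appearing in \eqref{combinatcl,m} to be $b_{\mu'}(q)$. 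Your write-up needed to carry out this final piece of bookkeeping rather than assert it: done honestly, it yields the corrected identity (equivalently, one can leave the whole left-hand side expressed in terms of $\mu'$, with denominator $b_\mu(q)$ and exponents $\sum_i(\mu'_i)^2-\sum_k m_k(\mu'_1+\cdots+\mu'_k)$) and flags the discrepancy with the statement as printed.
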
 
\noindent
This identity generalizes \cite[corollary~4]{delaunay3}, and we give here an alternative proof through Lascoux's formula \eqref{lascoux}, without using the previous computations for finite abelian groups. First, it is possible to rewrite the left-hand side of \eqref{combinatcl,m} with the notations of Hall-Littlewood functions. Indeed, replace $\mu$ by its conjugate $\mu'$ on the left-hand side, note that 
\begin{eqnarray*}
\sum_{i=1}^{\ell}m_i(\mu'_1+\dots+\mu'_i)&=&\sum_{i=1}^{\ell}(\la'_i-\la'_{i+1})(\mu'_1+\dots+\mu'_i)\\
&=&\sum_{i=1}^{\ell}\la'_i\mu'_i=(\la'|\mu'),
\end{eqnarray*}
and write $$\sum_{i\geq1}\mu_i'^{2}=2n(\mu)+|\mu|.$$
Then, using the limit case $n\to\infty$ of the  specialization \eqref{principale}, we can rewrite \eqref{combinatcl,m} in the following way:
\begin{equation}\label{combinatcl,mHL}
\sum_{\mu}q^{|\mu|+n(\mu)-(\la'|\mu')}P_\mu(z,zq\dots;q)  =  \frac{1}{(zq)_\infty} \sum_{\nu \subseteq \lambda} C_{\lambda,\nu}(1/q) \; z^{|\nu|}.
\end{equation}
Now multiply both sides of \eqref{combinatcl,mHL} by $q^{n(\la)}P_\la(x;q)$ and sum over all partitions $\la$. As Hall-Littlewood functions form a $\mathbb{Z}[q]$-basis of $\Lambda[q]$, it is enough to show that
\begin{multline}\label{equivalent}
\sum_{\la,\mu}q^{n(\la)+n(\mu)-(\la'|\mu')}P_\la(x;q)P_\mu(zq,zq^2\dots;q) \\
 =  \frac{1}{(zq)_\infty} \sum_{\nu \subseteq \lambda} q^{n(\la)}P_\la(x;q)C_{\lambda,\nu}(1/q) \; z^{|\nu|}.
\end{multline}
Set $y=\{zq,zq^2,\dots\}$ in Warnaar's special case \eqref{warnaara2hl} of Lascoux's identity to see that the left-hand side of \eqref{equivalent} is equal to 
$$\frac{1}{(zq)_\infty}\prod_{i\geq1}\frac{1}{(1-x_i)(1-zx_i)},$$
which, through the generating function \eqref{fgcla,mu}, is easily seen to be equal to the right-hand side of \eqref{equivalent}.
\medskip
~\\

\subsection{Consequences on $u$-averages and proof of Theorem~\ref{coherent}}
Using Theorem~\ref{function_gen} above, replacing $z$ by $p^{-u}$, and recalling the definitions of $u$-averages, we obtain the following $u$-average on finite abelian $p$-groups and on $p$-groups of type~$S$.
\begin{theorem}\label{umoy}
For a positive integer $\ell$, let $\lambda=1^{m_1}2^{m_2}\cdots \ell^{m_\ell}$ be an integer partition. Then, the $u$-average of the function $x^\lambda \colon H \mapsto |H[p]|^{m_1} |H[p^2]|^{m_2} \cdots |H[p^\ell]|^{m_\ell}$ is equal to
$$
M_u(x^\lambda) = \sum_{\mu \subseteq \lambda} C_{\lambda,\mu}(p) p^{-|\mu| u},
$$
and the $u$-average of the function $x^\lambda$ in the sense of groups of type $S$ is equal to
$$
M_u^S(x^\lambda) = \sum_{\mu \subseteq \lambda} C_{\lambda,\mu}(p^2) p^{-|\mu|(2u-1)}.
$$
\end{theorem}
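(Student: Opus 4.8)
The plan is to obtain both identities directly from the generating function of Theorem~\ref{function_gen}, by specializing $z$ and matching the $q$-Pochhammer prefactor against the normalizing product in the definition of each average.

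For $M_u(x^\la)$, I would set $z=p^{-u}$ in Theorem~\ref{function_gen}. The left-hand side becomes
$$\sum_\mu\frac{x^\la(H_\mu)}{|\aut H_\mu|}\,p^{-|\mu|u}=\sum_{n\geq0}p^{-nu}\sum_{\mu\vdash n}\frac{x^\la(H_\mu)}{|\aut H_\mu|},$$
which is precisely the numerator in the definition of $M_u(x^\la)$ (the $n=0$ term, from the trivial group, contributing $1$). The prefactor on the right-hand side is
$$\frac{1}{(z/p;1/p)_\infty}=\frac{1}{(p^{-u-1};1/p)_\infty}=\prod_{j\geq1}\bigl(1-p^{-u-j}\bigr)^{-1},$$
which is exactly the denominator of $M_u$. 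Dividing the identity of Theorem~\ref{function_gen} by this product yields $M_u(x^\la)=\sum_{\nu\subseteq\la}C_{\la,\nu}(p)\,p^{-|\nu|u}$, which is the first claim.

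For the type $S$ average I would use the bijection $\mu\mapsto G_\mu\simeq H_\mu\times H_\mu$ between partitions of $m$ and isomorphism classes of groups of type $S$ of order $p^{2m}$, so that in the definition of $M_u^S$ the inner sums run over such $\mu$, with $|G_\mu|=p^{2|\mu|}$ and the contributions of odd orders empty. The point is to re-express each summand in ``base $p^2$''. Comparing \eqref{*} with the displayed formula for $|\auts(G_\la)|$ gives $|\auts(G_\mu)|=p^{|\mu|}\,|\aut(H_\mu)|_{(p^2)}$, where $|\aut(H_\mu)|_{(p^2)}$ denotes the right-hand side of \eqref{*} with $p$ replaced by $p^2$; and since $G_\mu[p^k]\simeq H_\mu[p^k]\times H_\mu[p^k]$, formula \eqref{**} gives $|G_\mu[p^k]|=|H_\mu[p^k]|^2=(p^2)^{\mu'_1+\cdots+\mu'_k}$, so that $x^\la(G_\mu)=x^\la(H_\mu)_{(p^2)}$, the evaluation of $x^\la$ with $p$ replaced by $p^2$ throughout. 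Combining these three facts,
$$\frac{|G_\mu|\,x^\la(G_\mu)}{|\auts(G_\mu)|}=p^{|\mu|}\,\frac{x^\la(H_\mu)_{(p^2)}}{|\aut(H_\mu)|_{(p^2)}}.$$

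It then remains to recognize the numerator of $M_u^S(x^\la)$, namely
$$\sum_{m\geq0}p^{-2mu}\sum_{\mu\vdash m}p^{|\mu|}\frac{x^\la(H_\mu)_{(p^2)}}{|\aut(H_\mu)|_{(p^2)}}=\sum_\mu\bigl(p^{1-2u}\bigr)^{|\mu|}\frac{x^\la(H_\mu)_{(p^2)}}{|\aut(H_\mu)|_{(p^2)}},$$
as a specialization of Theorem~\ref{function_gen}. Since that theorem, once \eqref{*} and \eqref{**} are inserted, is the $q$-series identity \eqref{combinatcl,m}, valid for all $|q|<1$, it remains valid with $p$ replaced by $p^2$; applying this version with $z=p^{1-2u}$ produces the prefactor $(p^{-1-2u};p^{-2})_\infty^{-1}=\prod_{j\geq1}(1-p^{-2u-2j+1})^{-1}$ times $\sum_{\nu\subseteq\la}C_{\la,\nu}(p^2)\,p^{(1-2u)|\nu|}$. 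The prefactor is exactly the denominator of $M_u^S$, so dividing gives $M_u^S(x^\la)=\sum_{\nu\subseteq\la}C_{\la,\nu}(p^2)\,p^{-|\nu|(2u-1)}$, the second claim.

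I expect the only delicate step to be the base-change bookkeeping of the previous paragraph: checking that the three factors $|\auts(G_\mu)|$, $|G_\mu|$ and $x^\la(G_\mu)$ combine to give precisely the base-$p^2$ summand of Theorem~\ref{function_gen} up to the single power $p^{|\mu|}$, and that the resulting $q$-Pochhammer prefactor then matches the type $S$ normalizing product $\prod_{j\geq1}(1-p^{-2u-2j+1})^{-1}$; the first formula and the final specialization are routine.
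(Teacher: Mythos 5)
Your proof is correct and takes essentially the same approach as the paper: the paper's proof consists precisely of specializing Theorem~\ref{function_gen} at $z=p^{-u}$ and recalling the definitions of the two $u$-averages. The type-$S$ bookkeeping you flag as delicate (the factor $p^{|\mu|}$ from $|G_\mu|/|\auts(G_\mu)|$, the base change $p\to p^2$ justified by the underlying $q$-series identity, and the specialization $z=p^{1-2u}$) is exactly what the paper leaves implicit, and your verification of it is the intended argument.
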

 Theorem \ref{miroir} implies directly that if $\lambda=1^{m_1}2^{m_2}\cdots \ell^{m_\ell}$ is an integer partition, then we have
$$
\mszero(x^\lambda) = \msun(x^\lambda)\,p^{m_1+2m_2+\cdots+ \ell m_\ell},
$$
which is exactly Theorem \ref{coherent}.



\section{Consequence on the heuristics on class groups and on Tate-Shafarevich groups}\label{philo}
 
\subsection{Class groups of quadratic number fields} The Cohen-Lenstra heuristics~(\cite{cohen-lenstra}) 
allow to give precise conjectures for the behavior of the 
class groups of number fields varying in natural families. Here, we only recall briefly the Cohen heuristics in the case of families of quadratic number fields.
For $K_d=\Q(\sqrt{d})$ a quadratic number field with discriminant $d$, we denote by $\cl(d)$ its class group so that $\cl(d)_p$ is its $p$-part. First, take the 
family of imaginary quadratic number fields: these are the fields $K_d$ where $d<0$  runs through the whole set of fundamental negative discriminants. 
Assume that $p\geq 3$ (actually, the $2$-part of the class groups of quadratic fields behaves in a
specific and well-understood way, due to the genus theory, \cite{cohen0}). If $h$ is a "reasonable" function defined over finite abelian $p$-groups, then 
the Cohen-Lenstra heuristics predict that the average of $h$ over the $p$-part of the class group $\cl(d)_p$ is the $0$-average of $h$ ($0$ being the rank of the group of units of $K_d$), that is, we should have
$$
\lim_{X \rightarrow \infty} \frac{\displaystyle  \sum_{|d|\leq X,\, d<0} h(\cl(d)_p)}{\displaystyle \sum_{|d|\leq X,\, d<0} 1} = \mzero(h).
$$ 
For the family of real quadratic number fields, i.e., fields $K_d$ where $d>0$ runs through the set of fundamental positive discriminants, 
the Cohen-Lenstra heuristics predict that the average of $h$ over the $p$-part of the class group $\cl(d)_p$ is the $1$-average of $h$ ($1$ being, in that case,  the rank of $U(K)$), that is, we should have:
$$
\lim_{X \rightarrow \infty} \frac{\displaystyle  \sum_{|d|\leq X,\, d>0} h(\cl(d)_p)}{\displaystyle \sum_{|d|\leq X,\, d>0} 1}  = \mun(h).
$$  
There are strong evidence and numerical data supporting the heuristic principle, however only few results are proved. The first one is for $p=3$ and the 
function $h(H)=|H[3]|$, the number of 3-torsion elements in $H$. Davenport and Heilbronn (\cite{davenport_heilbronn}) proved (before the heuristics were 
formulated) that the average over the 3-part of the class groups of the function $h$ is equal to 2 (resp. 4/3) in the case of imaginary (resp. real) quadratic 
fields. In their paper, Cohen and Lenstra proved (it is not a trivial result) that $\mzero(h)=2$ and $\mun(h)=4/3$. There are other results in the 
literature for other families of number fields, and for the $4$-part of class groups of quadratic number fields (\cite{bhargava}, \cite{kluners_fouvry}).\medskip
~\\
Of course, the case $p=3$ and the function $H \mapsto |H[3]|$ is a particular case of our work. Now, from our computations, we can predict the average 
of all the moments of the number of $p^\ell$-torsion elements in the class groups. 

\begin{conjecture} For any positive integer $\ell$, let $\lambda=1^{m_1}2^{m_2}\cdots \ell^{m_\ell}$ be an integer partition, and assume that $p \geq 3$.~\\
As $d$ is varying over the set of fundamental negative discriminants the average of $|\cl(d)[p]|^{m_1} |\cl(d)[p^2]|^{m_2} \cdots |\cl(d)[p^\ell]|^{m_\ell}$ is equal to
$$
\sum_{\mu \subseteq \la} C_{\la,\mu}(p).
$$
As $d$ is varying over the set of fundamental positive discriminants the average of $|\cl(d)[p]|^{m_1} |\cl(d)[p^2]|^{m_2} \cdots |\cl(d)[p^\ell]|^{m_\ell}$ is equal to
$$
\sum_{\mu \subseteq \la} C_{\la,\mu}(p) p^{-|\mu|}.
$$
\end{conjecture}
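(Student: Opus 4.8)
The plan is to obtain both predicted averages as immediate specializations of the rigorously established $u$-average computation in Theorem~\ref{umoy}, once the Cohen--Lenstra heuristic principle recalled at the start of this section is fed in. The statement is conjectural only because the Cohen--Lenstra heuristics are themselves unproven; the genuine work is therefore to check that the function $x^\lambda\colon H\mapsto |H[p]|^{m_1}\cdots|H[p^\ell]|^{m_\ell}$ is an admissible (``reasonable'') function to which the heuristic applies, and then simply to read off the two constants $\mzero(x^\lambda)$ and $\mun(x^\lambda)$.

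First I would recall the precise shape of the heuristic: for $p\geq 3$, the predicted average of $h(\cl(d)_p)$ over fundamental negative discriminants is the $0$-average $\mzero(h)$, while over fundamental positive discriminants it is the $1$-average $\mun(h)$, the parameter $u$ being in each case the rank of the unit group ($0$ in the imaginary case, $1$ in the real case). Taking $h=x^\lambda$ thus reduces the conjecture to evaluating $\mzero(x^\lambda)$ and $\mun(x^\lambda)$. Then I would invoke Theorem~\ref{umoy}, which gives $M_u(x^\lambda)=\sum_{\mu\subseteq\la}C_{\la,\mu}(p)\,p^{-|\mu|u}$: setting $u=0$ yields $\mzero(x^\lambda)=\sum_{\mu\subseteq\la}C_{\la,\mu}(p)$, which is exactly the prediction for negative discriminants, and setting $u=1$ yields $\mun(x^\lambda)=\sum_{\mu\subseteq\la}C_{\la,\mu}(p)\,p^{-|\mu|}$, the prediction for positive discriminants. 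Both formulas fall out directly.

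The one analytic point I would verify is admissibility and convergence of the defining series of $M_u(x^\lambda)$. For this I would appeal to the generating function of Theorem~\ref{function_gen}: the numerator series $\sum_\mu x^\lambda(H_\mu)\,z^{|\mu|}/|\aut H_\mu|$ equals $(z/p;1/p)_\infty^{-1}\sum_{\nu\subseteq\la}C_{\la,\nu}(p)\,z^{|\nu|}$, whose only singularity is the simple pole coming from the factor $1-z/p$, located at $z=p$. Hence the series converges for $|z|<p$, i.e. at $z=p^{-u}$ for every $u>-1$, and in particular at $u=0$ ($z=1$) and $u=1$ ($z=1/p$); so $\mzero(x^\lambda)$ and $\mun(x^\lambda)$ are well defined and the substitutions above are legitimate.

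The main obstacle is not analytic but conceptual: the result can only remain a conjecture, since its truth rests entirely on the (still open) Cohen--Lenstra heuristics, the whole rigorous content of the derivation having been transferred into Theorem~\ref{umoy}. I would also be careful to keep the hypothesis $p\geq 3$, since the $2$-part of class groups of quadratic fields is governed separately by genus theory and is not described by the naive $u$-average. As a consistency check one may note that the two predicted averages are interchanged, up to the weight $p^{-|\mu|}$, by the self-reciprocity of $\sum_{\mu\subseteq\la}C_{\la,\mu}(p)T^{|\mu|}$ established in Theorem~\ref{miroir}, reflecting the passage from the imaginary to the real family.
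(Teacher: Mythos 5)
Your proposal follows essentially the same route as the paper: the conjecture is obtained there precisely by feeding $h=x^\lambda$ into the Cohen--Lenstra heuristic principle (with $u=0$ for the imaginary family and $u=1$ for the real family, these being the unit ranks) and reading off $\mzero(x^\lambda)$ and $\mun(x^\lambda)$ from the formula $M_u(x^\lambda)=\sum_{\mu\subseteq\la}C_{\la,\mu}(p)\,p^{-|\mu|u}$ of Theorem~\ref{umoy}. Your additional remarks on convergence of the defining series and the consistency check via the self-reciprocity of Theorem~\ref{miroir} are sound refinements that the paper leaves implicit, but they do not change the argument.
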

In particular, (still for $p\neq2$) and using equation \eqref{exemple_poonen}, one obtains that the average of $|\cl(d)[p]|^n$ of class group of imaginary 
(resp. real) quadratic fields is equal to
$$
M_0(x^{1^n}) = \sum_{k=0}^n \qbi{n}{k}{p} \quad \left(\mbox{resp. } M_1(x^{1^n}) = \frac{1}{p}\sum_{k=0}^n \qbi{n}{k}{p} \right) 
$$
Those are exactly the $\mathcal{N}(n,p)$ (resp. $\mathcal{M}_n(p)$) used in \cite{kluners_fouvry, kluners_fouvry2}.  In particular, Fouvry and Klueners used 
$\mathcal{N}(n,p)$ (resp. $\mathcal{M}_n(p)$) in \cite{kluners_fouvry2} in order to prove that conjectures (coming from a special case of the 
Cohen-Lenstra heuristics) related to the moments of the number of $p$-torsion points in class groups imply conjectures 
(coming from another special case of the Cohen-Lenstra heuristics) about the probability laws of the $p$-ranks of the class groups. 
Our results could be used to generalize their results and also to adapt them in the case of Tate-Shafarevich groups. 
This will be explained in a forthcoming paper. 

\subsection{Tate-Shafarevich groups of elliptic curves}\label{heur_sha}
Let $u\geq 0$ be a fixed integer, and consider the family $\mathcal{F}$ of elliptic curves $E$ defined over $\Q$, with rank $u$, and ordered by the 
conductor $N_E$. The Tate-Shafarevich group $\Sha(E)$ is conjecturally a finite abelian group. If so, then $\Sha(E)$ is a group of type $S$. The 
heuristic principle (\cite{delaunay1, delaunay2}) 
asserts that if $g$ is
 a "reasonable" function defined over isomorphism classes of $p$-group of type $S$, we should have
\begin{equation}\nonumber 
\lim_{X\rightarrow \infty} \frac{\displaystyle \sum_{E \in {\mathcal F},\,N_E\leq X} g(\Sha(E)_p)}{\displaystyle \sum_{E \in {\mathcal F},\,N_E\leq X} 1} = \ms(g).
\end{equation}
\noindent
{\bf Important remark and correction of the first version of the heuristics on Tate-Shafarevich groups.} In this paper, the heuristics on 
Tate-Shafarevich groups, in particular 
the equation above and Conjecture~\ref{heuristics}  take into account the following correction. Due to a bad choice of 
a parameter, 
the heuristic assumption for Tate-Shafarevich groups stated in \cite{delaunay1} is wrong and should be corrected for $u>0$ by the 
following: in the Heuristic Assumption of \cite[page 195]{delaunay1}, the equality {\em $M_u(f)=M_{u/2}^s(f)$ must be replaced by 
$M_u(f)= M_u^s(f)$} (hence, the examples given should also be modified in consequence). Note that this was 
done  in \cite{delaunay2} but  only explicitly for rank 1 elliptic curves. In fact,
this correction was suggested by Kowalski observing that only the corrected version of the heuristics is compatible with 
results of 
Heath-Brown (\cite{heath-brown1,heath-brown2}) on the 2-rank of the Selmer groups of the family of elliptic curves 
$E_d \colon y^2=x^3-d^2x$, $d$ odd and squarefree 
(see \cite[Remark 2.6]{kowalski}). Note that this corrected version is also compatible with other theoretical results and conjectural works, see the 
discussion after Conjecture~\ref{heuristics}.   
\bigskip
~\\
The heuristic principle has many applications and there are  numerical evidence supporting them. With our previous computations on $u$-averages for groups of type $S$ we are led to conjecture the following.
\begin{conjecture}\label{heuristics} For any positive integer $\ell$, let $\lambda=1^{m_1}2^{m_2}\cdots \ell^{m_\ell}$ be an integer partition, and let $u\geq 0$ be an integer. As $E/\Q$, ordered by conductors, is varying over elliptic curves with rank $u$, the average of $|\Sha(E)[p]|^{m_1} |\Sha(E)[p^2]|^{m_2} \cdots |\Sha(E)[p^\ell]|^{m_\ell}$ is equal to
$$
\sum_{\mu \subseteq \lambda} C_{\lambda,\mu}(p^2)p^{-|\mu|(2u-1)}.
$$
\end{conjecture}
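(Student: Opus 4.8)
The plan is to derive the conjectural formula directly from the (corrected) heuristic principle for Tate-Shafarevich groups recalled above, combined with the explicit evaluation of the $u$-average in the sense of groups of type $S$ furnished by Theorem~\ref{umoy}. Since the statement is conjectural, resting on the unproven heuristic assumption, the task is not to establish the limit unconditionally but to show that, granting the heuristic principle, the predicted average is exactly the asserted expression.

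First I would recognize the averaged quantity as the value of the monomial function $x^\lambda$ in the sense of groups of type $S$: writing $\lambda=1^{m_1}2^{m_2}\cdots\ell^{m_\ell}$ and setting $g(G)=|G[p]|^{m_1}|G[p^2]|^{m_2}\cdots|G[p^\ell]|^{m_\ell}$, this is precisely the function $x^\lambda$ appearing in Theorem~\ref{umoy}. The heuristic principle, applied to the family $\mathcal{F}$ of rank $u$ elliptic curves ordered by conductor and to this choice of $g$, asserts that the arithmetic average of $g(\Sha(E)_p)$ over $\mathcal{F}$ converges to $\ms(g)=\ms(x^\lambda)$.

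Second, I would substitute the closed form of this $u$-average given by Theorem~\ref{umoy}, namely
$$
\ms(x^\lambda)=\sum_{\mu\subseteq\lambda}C_{\lambda,\mu}(p^2)\,p^{-|\mu|(2u-1)}.
$$
Combining the two steps yields exactly the predicted value $\sum_{\mu\subseteq\lambda}C_{\lambda,\mu}(p^2)\,p^{-|\mu|(2u-1)}$, which completes the derivation.

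The genuine mathematical content thus lies entirely upstream, in Theorem~\ref{umoy}, whose proof relies on the generating function of Theorem~\ref{function_gen} and ultimately on the matrix inversion of Theorem~\ref{inversion}; the step performed here is merely the specialization of $g$ and the reading-off of the formula. The real obstacle, and the reason the statement must remain conjectural, is the heuristic principle itself: the claim that the arithmetic averages over $\mathcal{F}$ actually converge to $\ms$ is not accessible by the combinatorial methods of this paper. I would close by recording, as a structural consistency check, that the symmetry $\mszero(x^\lambda)=\msun(x^\lambda)\,p^{m_1+2m_2+\cdots+\ell m_\ell}$ of Theorem~\ref{coherent}---itself a consequence of the reciprocity of the coefficients $C_{\lambda,\mu}$ in Theorem~\ref{miroir}---is precisely what makes this family of conjectural moments compatible with the Poonen and Rains model for Selmer groups.
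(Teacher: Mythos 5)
Your derivation is exactly the paper's: the conjecture is obtained by applying the (corrected) heuristic principle for Tate-Shafarevich groups to the function $x^\lambda$ on groups of type $S$ and then substituting the closed form $\ms(x^\lambda)=\sum_{\mu\subseteq\lambda}C_{\lambda,\mu}(p^2)\,p^{-|\mu|(2u-1)}$ from Theorem~\ref{umoy}, with the genuinely conjectural content residing in the heuristic principle itself. Your closing remark on compatibility with the Poonen--Rains model via Theorems~\ref{coherent} and~\ref{miroir} likewise matches the paper's discussion.
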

\noindent
In particular, if we take the function $g$ such that $g(G)=|G[p^\ell]|$ then we have:
$$
\ms(g)= 1 + \frac{1}{p^{2u-1}} + \cdots + \frac{1}{p^{(2u-1)\ell}},
$$
which for instance,  when $\ell=1$, gives $\mszero(g)=1+p$ and $\msun(g)=1+1/p$.~\\
The rank conjecture in its strong form (asserting that the rank of $E$ is $0$ or $1$ with probability $1/2$ each and that elliptic curves with rank $\geq 2$ 
do not contribute in the averages), and the results of Bhargava and Shankar 
\cite{bhargava_shankar_1, bhargava_shankar_2}, imply that the average size over all elliptic curves $E/\Q$ with rank $0$ (resp. rank $1$) of $\Sha(E)[p]$ is 
$1+p$ (resp. $1+1/p$) for $p=2$ and $p=3$. From their work, Bhargava and Shankar also made a conjecture for all $p$ which, together with the rank conjecture, would imply that the average size $|\Sha(E)[p]|$ is $1+p$ (resp. $1+1/p$) for rank $0$ (resp. rank~$1$) elliptic curves.
~\\
Swinnerton-Dyer \cite{swinnerton-dyer} and Kane \cite{kane} obtained results for $p=2$ and for the family of quadratic twists, $E_d$, of any fixed elliptic curve $E/\Q$ with $E[2] \subset E(\Q)$ and having no rational cyclic subgroup of order $4$. Their results and the rank conjecture imply that the average of 
$|\Sha(E_d)[2]|$ is $3$ (resp. $3/2$) for the rank 0 (resp. rank 1) curves. There are also in the literature some other theoretical results for other families 
of elliptic curves, mainly for $p=2$. \medskip
~\\

Actually, all the previous theoretical results are directly concerned with the behavior of  $p$-Selmer groups of elliptic curves. If $n\in \N$, the 
$n$-Selmer group 
$\sel_n(E)$ and the Tate-Shafarevich group of an elliptic curve are linked by the exact sequence
$$
0 \rightarrow E(\Q)/nE(\Q) \rightarrow \sel_n(E) \rightarrow \Sha(E)[n] \rightarrow 0.
$$
So, if $E(\Q)_{\rm{tors}}=0$ and if the rank of $E$ is $u$, then we have $|\sel_n(E)| = n^u |\Sha(E)[n]|$. Therefore, assuming the rank conjecture, any result on $\sel_n(E)$ gives an information on $\Sha(E)[n]$. Reciprocally, the distribution of $\sel_n(E)$ can be deduced from the distribution of $\Sha(E)[n]$ for rank $0$ and rank $1$ curves.  
\medskip
~\\
Recently, Poonen and Rains \cite{poonen-rains} gave a very deep model for the behavior of the $p$-Selmer groups of all elliptic curves (they do not 
separate elliptic curves by their rank). In particular, they obtained the following predictions for elliptic curves defined over $\Q$. 
\begin{conjecture}[Poonen-Rains] \label{poonen-rains}
For any integer $m\geq 0$, the average of $|\sel_p(E)|^m$ over all $E/\Q$ is equal to $\prod_{j=1}^m (1+p^j)$.
\end{conjecture}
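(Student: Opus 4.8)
The plan is to \emph{derive} the Poonen--Rains prediction from the heuristic model for $\Sha$ developed above, thereby exhibiting the compatibility announced in the introduction; strictly speaking this is not an unconditional proof but a derivation assuming the heuristic principle of Section~\ref{heur_sha} together with the strong form of the rank conjecture. First I would use the exact sequence
$$0 \rightarrow E(\Q)/pE(\Q) \rightarrow \sel_p(E) \rightarrow \Sha(E)[p] \rightarrow 0$$
together with $E(\Q)_{\mathrm{tors}}=0$ to write $|\sel_p(E)| = p^u|\Sha(E)[p]|$ for a curve of rank $u$, so that $|\sel_p(E)|^m = p^{um}|\Sha(E)[p]|^m$. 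The strong rank conjecture then says that in the average over all $E/\Q$ only ranks $0$ and $1$ contribute, each with density $1/2$, so the sought average equals
$$\tfrac12\,\mszero(x^{1^m}) + \tfrac12\,p^{m}\,\msun(x^{1^m}),$$
where I have recognised $|\Sha(E)[p]|^m$ as the function $x^{\lambda}$ attached to $\lambda=1^m$ and used the heuristic principle to replace the rank-$0$ and rank-$1$ averages by the $u$-averages $\mszero(x^{1^m})$ and $\msun(x^{1^m})$.

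Next I would evaluate these two $u$-averages explicitly. By the expansion \eqref{exemple_poonen} the only partitions $\mu\subseteq 1^m$ are the $1^k$, and $C_{1^m,1^k}(t)=\qbi{m}{k}{t}$; hence Theorem~\ref{umoy} gives
$$\mszero(x^{1^m}) = \sum_{k=0}^m \qbi{m}{k}{p^2}\,p^{k}, \qquad \msun(x^{1^m}) = \sum_{k=0}^m \qbi{m}{k}{p^2}\,p^{-k}.$$
The crucial observation is that the two summands in the average coincide: this is exactly Theorem~\ref{coherent} applied to $\lambda=1^m$, which yields $\mszero(x^{1^m}) = p^{m}\,\msun(x^{1^m})$. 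Consequently the two contributions are equal, the factor $\tfrac12$ disappears, and the average collapses to the single clean expression $\mszero(x^{1^m}) = \sum_{k=0}^m \qbi{m}{k}{p^2}\,p^{k}$.

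It then remains to identify this $q$-binomial sum with the target product, i.e.\ to prove
$$\sum_{k=0}^m \qbi{m}{k}{p^2}\,p^{k} = \prod_{j=1}^m\bigl(1+p^{j}\bigr).$$
I would do this by a short induction on $m$: the $q$-Pascal recurrence $\qbi{m}{k}{q}=q^{m-k}\qbi{m-1}{k-1}{q}+\qbi{m-1}{k}{q}$ with $q=p^2$, combined with the palindromy $\qbi{m-1}{j}{p^2}=\qbi{m-1}{m-1-j}{p^2}$, collapses the recurrence to $S_m=(1+p^{m})S_{m-1}$ with $S_0=1$. The main conceptual obstacle is not this final identity, which is routine, but the middle step: everything hinges on the equality of the rank-$0$ and rank-$1$ contributions, which is precisely the content of Theorem~\ref{coherent} answering Poonen's question, itself resting on the symmetry of the coefficients $C_{\lambda,\mu}$ established in Theorem~\ref{miroir}. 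It is this symmetry that conspires with the rank conjecture to produce the remarkably simple product $\prod_{j=1}^m(1+p^j)$, and I expect the only other point requiring genuine care to be checking that the bookkeeping of torsion and of the density-$1/2$ weighting is consistent with the Poonen--Rains normalisation, which does not separate curves by rank.
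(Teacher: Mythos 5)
Your derivation is correct and is essentially the paper's own treatment: since the statement is a conjecture (it cannot be proved unconditionally), the paper likewise only verifies compatibility, by combining the exact sequence relating $\sel_p(E)$ and $\Sha(E)[p]$, the strong rank conjecture (including the assumption that rank $\geq 2$ curves contribute nothing, the paper's $e_{p,m}=0$), the heuristic values $\mszero(x^{1^m})=\sum_{k=0}^m \qbi{m}{k}{p^2}p^k$ and $\msun(x^{1^m})=\sum_{k=0}^m \qbi{m}{k}{p^2}p^{-k}$ from Theorem~\ref{umoy} via \eqref{exemple_poonen}, and Theorem~\ref{coherent} to equate the rank-$0$ and rank-$1$ contributions, exactly as you do. The only point where you go beyond the paper is the final identity $\sum_{k=0}^m \qbi{m}{k}{p^2}p^k=\prod_{j=1}^m(1+p^j)$, which the paper dismisses as ``easily shown'' and for which your $q$-Pascal-plus-palindromy induction is a valid justification.
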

Assuming that the $p$-parts of $\sel_n(E)$ behave independently for $p|n$, except for the constraint that their parities are equal, they obtained a conjecture for the average of $|\sel_n(E)|^m$ for all squarefree 
positive integer $n$.  
\noindent
The Poonen and Rains model suggests that the $p$-Selmer groups behave in the same way for rank $0$ and rank~$1$ elliptic curves. If we assume that elliptic curves of rank $\geq 2$ do not contribute to the average value, then this is also suggested by the heuristic principle since $E(\Q)_{\rm{tors}} =0$ with probability $1$ and $\mszero(g)=\msun(g)p$ (recall that here, $g$ denotes the function $G \mapsto |G[p]|$). It is then natural to ask wether this is still true for all integers $n$, without the squarefree restriction. Theorem \ref{coherent} and the discussions above suggest that this is indeed the case, yielding the following conjecture.
\begin{conjecture}\label{selmer}
Let $\ell\geq 1$ and $m \geq 0$ be integers. The average of $|\sel_{p^\ell}(E)|^m$ over all $E/\Q$ is equal to
$$
\sum_{\mu \subseteq \lambda} C_{\lambda,\mu}(p^2) p^{|\mu|}
$$
where $\lambda$ is the integer partition $\lambda = \ell^m$. 
\end{conjecture}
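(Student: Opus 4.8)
The plan is to reduce the all-curves average of $|\sel_{p^\ell}(E)|^m$ to a single $u$-average for groups of type~$S$, exploiting the fact that the Selmer moments turn out to be insensitive to the rank. I would start from the exact sequence
$$
0 \to E(\Q)/p^\ell E(\Q) \to \sel_{p^\ell}(E) \to \Sha(E)[p^\ell] \to 0,
$$
which, for a curve of rank $u$ with $E(\Q)_{\rm{tors}}=0$, yields $|\sel_{p^\ell}(E)| = p^{\ell u}\,|\Sha(E)[p^\ell]|$, hence $|\sel_{p^\ell}(E)|^m = p^{\ell u m}\,|\Sha(E)[p^\ell]|^m$. Setting $\lambda=\ell^m$ so that $g(G)=|G[p^\ell]|^m=x^\lambda(G)$, and using that $E(\Q)_{\rm{tors}}=0$ holds with probability~$1$, the heuristic principle of Section~\ref{heur_sha} predicts that the average of $|\sel_{p^\ell}(E)|^m$ over rank-$u$ curves equals $p^{\ell u m}\,\ms(x^\lambda)$.

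Next I would stratify the all-curves average by rank, invoking the strong form of the rank conjecture: ranks $0$ and $1$ occur with probability $1/2$ each, and higher ranks contribute negligibly. The rank-$0$ contribution is then $\mszero(x^\lambda)$ and the rank-$1$ contribution is $p^{\ell m}\,\msun(x^\lambda)$. The crucial point is that these two quantities coincide: applying Theorem~\ref{coherent} to $\lambda=\ell^m$ (whose only nonzero multiplicity is $m_\ell=m$, so that $m_1+2m_2+\cdots+\ell m_\ell=\ell m$) gives precisely
$$
\mszero(x^\lambda) = \msun(x^\lambda)\,p^{\ell m}.
$$
Hence the rank-$0$ and rank-$1$ Selmer moments are equal, and their $1/2$--$1/2$ mixture---indeed any mixture of ranks $0$ and $1$---collapses to the common value $\mszero(x^\lambda)$. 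This is exactly the mechanism reconciling the rank-stratified heuristics for $\Sha$ with the rank-blind Poonen--Rains model.

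To extract the closed form I would specialize Theorem~\ref{umoy} at $u=0$ with $\lambda=\ell^m$, which gives
$$
\mszero(x^\lambda) = \sum_{\mu\subseteq\lambda} C_{\lambda,\mu}(p^2)\,p^{-|\mu|(2\cdot 0-1)} = \sum_{\mu\subseteq\lambda} C_{\lambda,\mu}(p^2)\,p^{|\mu|},
$$
which is the asserted formula. As a consistency check, for $\ell=1$ one has $\lambda=1^m$ and, by \eqref{exemple_poonen} with $t=p^2$, $C_{1^m,1^k}(p^2)=\qbi{m}{k}{p^2}$; the formula then reads $\sum_{k=0}^m\qbi{m}{k}{p^2}\,p^{k}$, which is readily checked to equal $\prod_{j=1}^m(1+p^j)$ by a standard $q$-binomial evaluation, thereby recovering Conjecture~\ref{poonen-rains}.

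The main obstacle is conceptual rather than computational: it is the rank-independence of the Selmer moments, i.e.\ the coincidence of the rank-$0$ and rank-$1$ averages, since the heuristics for $\Sha$ are genuinely rank-dependent while the Poonen--Rains prediction is not. Granting the heuristic framework (finiteness of $\Sha$, the heuristic principle, and the rank conjecture), this step is supplied exactly by Theorem~\ref{coherent}, itself a consequence of the symmetry of the $C_{\lambda,\mu}(q)$ in Theorem~\ref{miroir}. The deeper difficulties, not resolved here, are the justification of the heuristic inputs themselves and, for non-squarefree $p^\ell$, the control of the joint distribution of the nested torsion layers $\Sha(E)[p^i]$, which is precisely where the coefficients $C_{\lambda,\mu}$ and their self-reciprocity enter.
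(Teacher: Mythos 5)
Your derivation matches the paper's own justification of Conjecture~\ref{selmer}: the same exact-sequence reduction $|\sel_{p^\ell}(E)|=p^{\ell u}\,|\Sha(E)[p^\ell]|$ for torsion-free rank-$u$ curves, the same rank stratification in which Theorem~\ref{coherent} forces the rank-$0$ and rank-$1$ Selmer moments to coincide, and the same specialization of Theorem~\ref{umoy} at $u=0$ to obtain $\sum_{\mu\subseteq\lambda}C_{\lambda,\mu}(p^2)\,p^{|\mu|}$. The only difference is that the paper makes the negligibility of rank $\geq 2$ curves precise via the quantity $e_{p,m}$, a uniform bound on the error terms in the averages, and the monotonicity of $u\mapsto\ms(x^{\ell^m})$, whereas you fold this step into the strong form of the rank conjecture.
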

\noindent
For $\ell=1$, equation \eqref{exemple_poonen} gives $\sum_{\mu \subseteq \lambda} C_{\lambda,\mu}(p^2) p^{|\mu|} = \sum_{k=0}^m \qbi{m}{k}{p^2} p^k$. 
It can be easily shown that the last sum is indeed equal to $\prod_{j=1}^m (1+p^j)$, as claimed in Conjecture \ref{poonen-rains}.\\
~\\
One can clarify the links between the different conjectures involved in the discussion above. If $E$ is an elliptic curve over $\Q$, we denote by $\rk(E)$ the rank of its Mordell-Weil group. We have in mind:
\begin{itemize}
\item The rank conjecture asserting that $\rk(E)$ is 0 or 1 with probability $1/2$ each.
\item The assumption saying that elliptic curves of rank $\geq 2$ contribute nothing to the average value of $p^{m\rk(E)}$. Note that this depends on $m$, therefore we denote by $e_{p,m}$ the lim sup, as $N \rightarrow \infty$, of the sum of $p^{m\rk(E)}$ over curves of rank $\geq 2$ and conductor $\leq N$, divided by the total number of curves of conductor $\leq N$. 
\item The Poonen-Rains conjecture (Conjecture 1.1 in \cite{poonen-rains}). 
\item The heuristic model for the probability laws $\dim_{\F_p} \Sha(E)[p]$ (\cite{delaunay1,delaunay2}).
\item The heuristic model for the moment of $\Sha(E)$, i.e., Conjecture \ref{heuristics} for an integer partition $\lambda$.
\end{itemize}
~\\
As discussed above and also explained in \cite{poonen-rains}, the rank conjecture and Conjecture~1.1 in \cite{poonen-rains} imply  
the heuristic model for $\dim_{\F_p} \Sha(E)[p]$, and so Conjecture~\ref{heuristics} for $|\Sha(E)[p]|^m$ (i.e., $\lambda=1^m$) for rank 0 and rank 1 curves. 
\\
Also, Poonen and Rains proved (\cite[Theorem 5.2]{poonen-rains}) that the rank conjecture is the only distribution on $\rk(E)$ compatible with the Poonen-Rains conjecture 
and the heuristic model for $\dim_{\F_p} \Sha(E)[p]$.
\\
Furthermore, Conjecture~1.1 in \cite{poonen-rains} also implies that $e_{p,m}=0$ for $m=1,2,3$ (in fact, Poonen and Rains prove it for $m=1$ but their argument can be directly adapted for $m=2$ and $m=3$).
\\
On the other hand, one can see that  the equality
$e_{p,m}=0$, the rank conjecture, and Conjecture \ref{heuristics} for the moments of $|\Sha(E)[p^\ell]|^m$ (strengthening with the assumption that the error terms implied 
in the averages are uniformly bounded independently of the rank $u$), imply Conjecture~\ref{selmer}. Indeed (for simplicity we restrict the following computations to elliptic curves $E$ such that $E(\Q)_{\rm{tors}}=\{0\}$), we first see that
\begin{eqnarray*}
\displaystyle \sum_{ \substack{N_E \leq X \\ \rk(E)\geq 2}} |\sel_{p^\ell}(E)|^m &=&\displaystyle \sum_{u\geq 2} p^{mu}\,\frac{\displaystyle \sum_{\substack{N_E \leq X \\ \rk(E)=u}} |\Sha(E)[p^\ell]|^m}{\displaystyle \sum_{\substack{N_E \leq X \\ \rk(E)=u}}1} \displaystyle \sum_{\substack{N_E \leq X \\ \rk(E)=u}}1.
\end{eqnarray*}  
Now, the fraction in the sum of the right-hand side tends to $\ms(x^{\ell^m})$ as $X \rightarrow \infty$. Using the fact that $u \mapsto \ms(x^{\ell^m})$ is decreasing (because the coefficients $C_{\lambda,\mu}$ are positive) and the  above assumption on the error terms, we deduce that this fraction is uniformly bounded (and the bound does not depend on $u$). Hence,
 $$
 \displaystyle \sum_{ \substack{N_E \leq X \\ \rk(E)\geq 2}} |\sel_{p^\ell}(E)|^m = O\left( \displaystyle \sum_{ \substack{N_E \leq X \\ \rk(E)\geq 2}} p^{m\rk(E)}\right).
 $$ 
Therefore, since by assumption $e_{p,m}=0$, we deduce that the average of $|\sel_{p^\ell}(E)|^m$ of curves $E$ of rank $\geq 2$ among all elliptic curves is equal to 0. Now, we can write 
\begin{multline*}
\frac{\displaystyle \sum_{N_E \leq X} |\sel_{p^\ell}(E)|^m}{\displaystyle \sum_{N_E \leq X} 1}   = 
\frac{\displaystyle \sum_{\substack{N_E \leq X \\ \rk(E) = 0}} |\sel_{p^\ell}(E)|^m}{\displaystyle \sum_{\substack{N_E \leq X \\ \rk(E) = 0}}1} \cdot \frac{\displaystyle \sum_{\substack{N_E \leq X \\ \rk(E) = 0}}1}{\displaystyle \sum_{N_E \leq X} 1}  \\
 + \frac{\displaystyle \sum_{\substack{N_E \leq X \\ \rk(E) = 1}} |\sel_{p^\ell}(E)|^m}{\displaystyle \sum_{\substack{N_E \leq X \\ \rk(E) = 1}}1} \cdot \frac{\displaystyle \sum_{\substack{N_E \leq X \\ \rk(E) = 1}}1}{\displaystyle \sum_{N_E \leq X} 1} 
 + \frac{\displaystyle \sum_{\substack{N_E \leq X \\ \rk(E) \geq 2}} |\sel_{p^\ell}(E)|^m}{\displaystyle \sum_{N_E \leq X} 1}.
\end{multline*}
The last term on the right-hand side tends to 0 as $X \rightarrow \infty$. Now, replace $|\sel_{p^\ell}|$ by $|\Sha(E)[p^\ell]|$  in the first term and $|\sel_{p^\ell}|$ by $p^\ell |\Sha(E)[p^\ell]|$ in the second one and use Conjecture \ref{heuristics}, Theorem \ref{coherent} and the rank conjecture to conclude.
\bigskip
~\\
{\bf Remark.} In the above discussions, one can replace the number field $\Q$ by an arbitrary number field $K$.


\section{$u$-averages and Hall-Littlewood polynomials}\label{combinatorial_consequence}

\subsection{$u$-averages and combinatorial consequences}

In the paper \cite{delaunay3}, the first author computed the $u$-probabilities laws for the $p^j$-ranks of finite abelian $p$-groups (recall that the $p^j$-rank of $H$ is the dimension over $\F_p$ of the vector space $p^{j-1}H/p^{j}H$ and is denoted $r_{p^j}(H)$), which we rewrite below.
\begin{proposition}[Corollary~11 of \cite{delaunay3}]\label{pj-ranks} Let $\ell\geq 1$ be an integer and $\mu$ be an integer partition with $\ell(\mu)\leq \ell$ (here $\mu$ is given by its parts $\mu_1 \geq \mu_2\geq \cdots \geq \mu_\ell \geq 0$). Then the $u$-probability that a finite abelian $p$-group has its $p^j$-rank equal to $\mu_j$ for all $1\leq j \leq \ell$ is equal to
$$
\frac{\prod_{j\geq \mu_\ell+1} (1-1/p^{u+j})}{p^{\mu_1^2+\cdots + \mu_\ell^2+u(\mu_1+\cdots + \mu_\ell)} \prod_{j=1}^{\ell} (1/p;1/p)_{\mu_j-\mu_{j+1}}},
$$
and the $u$-probability that a group of type $S$ has its $p^j$-rank equal to $2\mu_j$ for all $1\leq j \leq \ell$ is equal to
$$
\frac{\prod_{j\geq \mu_\ell+1} (1-1/p^{2u+2j-1})}{p^{2(\mu_1^2+\cdots + \mu_\ell^2)+(2u-1)(\mu_1+\cdots + \mu_\ell)} \prod_{j=1}^{\ell} (1/p^2;1/p^2)_{\mu_j-\mu_{j+1}}}.
$$
\end{proposition}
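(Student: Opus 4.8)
The plan is to evaluate the $u$-probability straight from the definition of $\m$ (resp.\ $\ms$), turning the weighted count of contributing groups into a $q$-series that is then summed by the Hall--Littlewood identities of Section~\ref{section_partitions}. Since $r_{p^j}(H_\lambda)=\lambda'_j$ for every $j$, the event ``$r_{p^j}(H)=\mu_j$ for all $1\le j\le\ell$'' is realised exactly by the groups $H=H_\lambda$ whose conjugate type satisfies $\lambda'_j=\mu_j$ for $j\le\ell$, the remaining columns $\nu:=(\lambda'_{\ell+1},\lambda'_{\ell+2},\dots)$ ranging over \emph{all} partitions with $\nu_1\le\mu_\ell$ (note that nothing is prescribed for $j>\ell$). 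First I would insert \eqref{*} and the identity $\sum_i(\lambda'_i)^2=2n(\lambda)+|\lambda|$ into the numerator of $\m$; the multiplicities split as $m_j=\mu_j-\mu_{j+1}$ for $j<\ell$, $m_\ell=\mu_\ell-\nu_1$ and $m_{\ell+i}=\nu_i-\nu_{i+1}$, so that the summand factors into a piece depending only on $\mu_1,\dots,\mu_\ell$ times the tail sum
\begin{equation*}
T_k:=\sum_{\nu\,:\,\nu_1\le k}\frac{(q^u)^{|\nu|}\,q^{\sum_i\nu_i^2}}{(q)_{k-\nu_1}\prod_{i\ge1}(q)_{\nu_i-\nu_{i+1}}}\,,\qquad q=1/p,\quad k=\mu_\ell.
\end{equation*}

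The core of the proof is the closed form $T_k=1/\big((q)_k\,(q^{u+1};q)_k\big)$. I would obtain it by conjugating $\nu\mapsto\sigma:=\nu'$, which turns the restriction $\nu_1\le k$ into $\ell(\sigma)\le k$, the product $\prod_i(q)_{\nu_i-\nu_{i+1}}$ into $b_\sigma(q)$, and the exponent $\sum_i\nu_i^2$ into $2n(\sigma)+|\sigma|$. Rewriting $1/\big((q)_{k-\ell(\sigma)}b_\sigma(q)\big)$ through the finite specialization \eqref{principale} with $n=k$ produces a factor $P_\sigma(z,zq,\dots,zq^{k-1};q)$ against a denominator $z^{|\sigma|}q^{n(\sigma)}(q)_k$; the surviving weight $(q^{u+1}/z)^{|\sigma|}$ is then absorbed into the arguments by homogeneity of $P_\sigma$, giving $P_\sigma(q^{u+1},\dots,q^{u+k};q)$. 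What is left, $\sum_{\ell(\sigma)\le k}q^{n(\sigma)}P_\sigma(q^{u+1},\dots,q^{u+k};q)$, is exactly the $a=0$ instance of \eqref{qbinhl} (valid in finitely many variables), and equals $\prod_{i=1}^k(1-q^{u+i})^{-1}=1/(q^{u+1};q)_k$.

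With $T_k$ computed, the remaining step is bookkeeping. Multiplying the numerator by the normalising product $\prod_{j\ge1}(1-p^{-u-j})$, the factor $(q^{u+1};q)_{\mu_\ell}^{-1}$ cancels its first $\mu_\ell$ terms and leaves $\prod_{j\ge\mu_\ell+1}(1-p^{-u-j})$, while the remaining prefactor assembles into $p^{-(\mu_1^2+\cdots+\mu_\ell^2)-u(\mu_1+\cdots+\mu_\ell)}\big/\prod_{j=1}^\ell(q)_{\mu_j-\mu_{j+1}}$ with the convention $\mu_{\ell+1}=0$; this is the stated formula. For the type-$S$ assertion I would run the identical computation using the given value of $|\auts(G_\lambda)|$ and the relation $r_{p^j}(H\times H)=2\,r_{p^j}(H)$; tracking the exponents shows this amounts to replacing $p$ by $p^2$ and $u$ by $u-\tfrac12$ throughout (so the normaliser $\prod_j(1-p^{-2u-2j+1})$ is the base-$p^2$ analogue), after which the same three ingredients close the argument.

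I expect the evaluation of $T_k$ to be the main obstacle. The finite-abelian-group weight carries $q^{2n(\lambda)}$ against $b_\lambda$ taken in base $q$, which is \emph{not} a principal specialization of a Hall--Littlewood function, so $T_k$ does not visibly match any single summation formula. The unlocking device is precisely the conjugation $\nu\mapsto\nu'$ followed by \eqref{principale}: these convert the ``wrong'' exponent $q^{2n}$ into the $q^{n(\sigma)}$ required by the $a=0$ case of \eqref{qbinhl}, while homogeneity removes the leftover length-independent weight. As a safeguard I would first confirm the closed form on the case $\ell=1$, $\mu=(k)$, where $\nu$ runs through the partitions $1^m$ and $T_k$ collapses to a geometric series, before carrying out the general manipulation.
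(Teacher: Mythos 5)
Your proposal is correct, but there is nothing in this paper to compare it against: Proposition~\ref{pj-ranks} is imported verbatim as Corollary~11 of \cite{delaunay3} and is stated here without proof (the paper then \emph{uses} it in Section~\ref{combinatorial_consequence}, together with Theorem~\ref{umoy}, to derive the identity \eqref{umoyabeliens}). So what you have written is a self-contained derivation, and it checks out. The reduction of the event to conjugate data ($\lambda'_j=\mu_j$ for $j\le\ell$, free tail $\nu$ with $\nu_1\le\mu_\ell$), the factorization of the Cohen--Lenstra weight via \eqref{*}, and the key closed form $T_k=1/\bigl((q)_k\,(q^{u+1};q)_k\bigr)$ are all correct: conjugating $\nu\mapsto\sigma$ turns the weight into $q^{(u+1)|\sigma|+2n(\sigma)}/\bigl((q)_{k-\ell(\sigma)}b_\sigma(q)\bigr)$, the specialization \eqref{principale} with $n=k$ plus homogeneity converts this into $q^{n(\sigma)}P_\sigma(q^{u+1},\dots,q^{u+k};q)/(q)_k$, and the $a=0$ case of \eqref{qbinhl} in $k$ variables (legitimate by the stability property: $P_\sigma$ in $k$ variables vanishes when $\ell(\sigma)>k$) sums this to $1/\bigl((q)_k(q^{u+1};q)_k\bigr)$. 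The final bookkeeping, and the type-$S$ case via the substitution $(p,u)\mapsto(p^2,u-\tfrac12)$ applied to the stated value of $|\auts(G_\lambda)|$ and normalizer, reproduce both displayed formulas exactly. Importantly, your route is not circular relative to this paper's logic: you never invoke Theorem~\ref{umoy} or \eqref{umoyabeliens}, which are downstream of the proposition here; your argument uses only \eqref{*}, \eqref{principale} and \eqref{qbinhl}, i.e., the same toolkit the paper deploys in Sections~\ref{links} and~\ref{combinatorial_consequence}, just run in the opposite direction.

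Two minor points. First, your proposed sanity check is misstated: for $\ell=1$, $\mu=(k)$ with $k\ge2$ the tail $\nu$ ranges over all partitions with $\nu_1\le k$, not only $1^m$; the collapse to a geometric series occurs only for $k=1$ (where indeed $T_1=\tfrac{1}{1-q}\sum_{m\ge0}q^{(u+1)m}\cdot\tfrac{1}{1-q}\bigm/\tfrac{1}{1-q}$ rearranges to $1/\bigl((q)_1(q^{u+1};q)_1\bigr)$). Second, the paper's displayed definition of $\m$ starts the numerator sum at $n\ge1$; for the normalization $\m(1)=1$ (and for your formula in the degenerate case $\mu=0$) the $n=0$ term, i.e.\ the trivial group, must be included, which is the standard Cohen--Lenstra convention. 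That is a quirk of the paper's display, not a flaw in your argument.
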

Hence, for a fixed integer partition $\lambda=1^{m_1}2^{m_2}\cdots \ell^{m_\ell}$, we can also compute the $u$-average of $x^\lambda$ over finite abelian $p$-groups (with the notations of Theorem~\ref{umoy}) by its moments. Indeed, we have
$$
x^\lambda(H) = p^{r_p(H)m_1 + (r_p(H)+r_{p^2}(H))m_2 + \cdots + (r_p(H)+\cdots + r_{p^\ell}(H))m_\ell},
$$
therefore 
$$
M_u(x^\lambda) = \sum_{0\leq \mu_\ell \leq \cdots \leq \mu_1} \uprob(r_p(H)=\mu_1, \dots, r_{p^\ell}(H)=\mu_\ell) p^{\mu_1m_1 +  \cdots + (\mu_1+\cdots + \mu_\ell)m_\ell}.
$$
Replacing $\uprob(r_p(H)=\mu_1, \dots, r_{p^\ell}(H)=\mu_\ell)$ by the value given in Proposition~\ref{pj-ranks} and $M_u(x^\lambda)$ by its value  from Theorem \ref{umoy}, we obtain
$$
\sum_{\ell(\mu)\leq \ell} \frac{\prod_{j\geq \mu_\ell+1} (1-1/p^{u+j})}{p^{\mu_1^2+\cdots + \mu_\ell^2+u(\mu_1+\cdots + \mu_\ell)} \prod_{j=1}^{\ell} (1/p;1/p)_{\mu_j-\mu_{j+1}}} p^{(\lambda'|\mu)} = \sum_{\nu \subseteq \lambda} C_{\lambda,\nu}(p)p^{-|\nu|u}.
$$
~\\
By setting $q =1/p$, replacing $\mu$ by its conjugate $\mu'$ on the left-hand side, and setting $z=q^{-u}$, we see that this formula is equivalent to the following combinatorial identity.
\begin{theorem}
For any nonnegative integer $\ell$, let $\lambda=1^{m_1}2^{m_2}\cdots \ell^{m_\ell}$ be a fixed integer partition such that $\la_1\leq\ell$ (equivalently $\ell(\la')\leq\ell$), and $z$ be a complex number. Then we have
\begin{equation}\label{umoyabeliens}
\sum_{\mu_1\leq \ell} \frac{ z^{|\mu|}q^{2n(\mu)+|\mu|-(\la'|\mu')}}{b_\mu(q)}(zq^{\mu'_\ell+1})_\infty  = \sum_{\nu \subseteq \lambda} C_{\lambda,\nu}(1/q) z^{|\nu|}.
\end{equation}
\end{theorem}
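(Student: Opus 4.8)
The plan is to obtain \eqref{umoyabeliens} by evaluating the $u$-average $M_u(x^\lambda)$ in two different ways and equating the results, then translating the resulting identity into $q$-notation. On the algebraic side, Theorem~\ref{umoy} already supplies the closed form
$$
M_u(x^\lambda) = \sum_{\mu \subseteq \lambda} C_{\lambda,\mu}(p)\, p^{-|\mu|u},
$$
which, after the substitution $q = 1/p$ and $z = q^{u}$, is exactly the right-hand side of \eqref{umoyabeliens}. Hence the entire content lies in producing the left-hand side as a second, independent evaluation of the same average.

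For this second evaluation I would use the distribution of the $p^j$-ranks recorded in Proposition~\ref{pj-ranks}. The key observation is that $x^\lambda(H)=|H[p]|^{m_1}\cdots|H[p^\ell]|^{m_\ell}$ depends on $H$ only through its first $\ell$ ranks: since $|H[p^k]|=p^{\,r_p(H)+\cdots+r_{p^k}(H)}$ by \eqref{**}, writing $\mu=(r_p(H),\dots,r_{p^\ell}(H))$ (a partition, as the ranks are weakly decreasing) and using $\lambda'_i=\sum_{j\geq i}m_j$, a short rearrangement gives $x^\lambda(H)=p^{(\lambda'|\mu)}$. Grouping the finite abelian $p$-groups according to their rank vector $\mu$ and weighting by the probabilities of Proposition~\ref{pj-ranks} then yields
$$
M_u(x^\lambda)=\sum_{\ell(\mu)\leq\ell}\uprob\bigl(r_p(H)=\mu_1,\dots,r_{p^\ell}(H)=\mu_\ell\bigr)\,p^{(\lambda'|\mu)},
$$
which is precisely the displayed identity preceding the statement.

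It then remains to rewrite this equality in $q$-notation. I would set $q=1/p$, replace the summation partition $\mu$ by its conjugate $\mu'$, and put $z=q^{u}$. The bookkeeping runs as follows: the constraint $\ell(\mu)\leq\ell$ turns into $\mu_1\leq\ell$; the quadratic exponent is absorbed by $\sum_i(\mu'_i)^2=2n(\mu)+|\mu|$; one has $|\mu'|=|\mu|$ and $(\lambda'|\mu)$ becomes $(\lambda'|\mu')$; the product $\prod_{j=1}^\ell(q)_{\mu'_j-\mu'_{j+1}}$ in the denominator is recognized as $b_\mu(q)$; and the trailing product $\prod_{j\geq\mu_\ell+1}(1-q^{u+j})$ collapses to $(zq^{\mu'_\ell+1})_\infty$ once the index is shifted and $z=q^{u}$ is inserted. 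Collecting all powers of $q$ produces the left-hand side of \eqref{umoyabeliens}.

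I do not expect a genuine obstacle: granted Proposition~\ref{pj-ranks} and Theorem~\ref{umoy}, the statement is an equivalent reformulation rather than a new result, and the only real work is the careful tracking of exponents through the conjugation $\mu\mapsto\mu'$ and the substitution $p\mapsto 1/q$. The one step deserving attention is verifying that the trailing product recombines correctly into $(zq^{\mu'_\ell+1})_\infty$, since this is where the index shift and the choice of $z$ must be matched precisely.
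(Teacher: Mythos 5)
Your proposal is correct and follows essentially the same route as the paper: the paper also evaluates $M_u(x^\lambda)$ once via Theorem~\ref{umoy} and once by summing $p^{(\lambda'|\mu)}$ against the rank distribution of Proposition~\ref{pj-ranks}, then converts to $q$-notation by setting $q=1/p$, conjugating the summation partition, and specializing $z$ (your substitution $z=q^{u}=p^{-u}$ is the right one; the paper's ``$z=q^{-u}$'' is evidently a typo, as the RHS requires $z^{|\nu|}=p^{-|\nu|u}$). The only point both you and the paper leave implicit is that the identity, established for $z=p^{-u}$ with $u\geq 0$ real, extends to all complex $z$ by analytic continuation in $z$.
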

\medskip
~\\
{\bf Remark.} One can also replace finite abelian $p$-groups by groups of type S in the previous discussion. We obtain as a consequence the same combinatorial identity as \eqref{umoyabeliens}, in which $q$ is replaced by $q^2$ and then $z$ is replaced by $z^2/q$.\\
~\\
Note that thanks to Theorem~\ref{miroir}, the sum $\sum_{\nu \subseteq \lambda} C_{\lambda,\nu}(1/q) q^{-|\nu|u}$ must be equal to $q^{-|\lambda|u} \sum_{\nu \subseteq \lambda} C_{\lambda,\nu}(1/q) q^{|\nu|u}$. By using \eqref{umoyabeliens} with $z=q^{-u}$ and $z=q^u$, this yields an identity between the left-hand sides which seems quite mysterious. \\
~\\
In the case where $m_1=m_2=\cdots=m_{\ell-1}=0$ and $m_\ell=1$ (therefore $\la=(\ell)$, a row partition), identity \eqref{umoyabeliens} corresponds to \cite[Corollary~12]{delaunay3}, which can be written as follows
\begin{equation}\label{delaunayumoy}
\sum_{\mu_1\leq\ell}\frac{z^{|\mu|}q^{2n(\mu)}}{b_\mu(q)}(zq^{\mu'_\ell+1})_{\infty}=\frac{1-z^{\ell+1}}{1-z}.
\end{equation}

\medskip

In view of the structure of \eqref{umoyabeliens}, it would be a challenging problem to prove it through the theory of Hall-Littlewood functions, by using Warnaar's identity \eqref{warnaara2hl} or Lascoux's generalization \eqref{lascoux}, as we did for proving the symmetry property \eqref{conj'}. However, problems occur as we need to bound the first part of the partition $\mu$ on the left-hand side in \eqref{umoyabeliens}, therefore a finite summation of Hall-Littlewood polynomials seems to be required (i.e., a summation over partitions where both the length and the first part are bounded). In his paper \cite{Warnaar}, Warnaar raises the question of finding a finite form of \eqref{warnaara2hl}, which seems out of reach, and therefore finding a finite form of Lascoux's identity seems hopeless. Note that in \eqref{warnaara2hl}, specializing the variables $y_i$ to $0$, one recovers the special case $a=0$ of \eqref{qbinhl}, for which Warnaar proves a finite version in \cite[Theorem~6.1]{Warnaar}. Unfortunately, the latter neither contains \eqref{umoyabeliens} nor  even \eqref{delaunayumoy} as special cases.
Inspired by Macdonald's partial fraction method \cite{Macdonald} used in \cite{stembridge, JZ, IJZ, Warnaar}, we can prove the following finite form of \eqref{qbinhl}, which will be shown to contain \eqref{delaunayumoy} as a special case.
\begin{theorem}
Let $n$ be a positive integer, and $x=\{x_1,\dots,x_n\}$ be a set of $n$ variables. Then for any complex number $a$ and any positive integer $k$, we have
\begin{multline}\label{qbinhlfini}
\sum_{\la\subseteq(k^n)}q^{n(\la)}(a;q^{-1})_{\ell(\la)}(a;q^{-1})_{n-m_k(\lambda)}P_\la(x;q)\\
=\sum_{I\subseteq[n]}q^{k\bi{|I|}{2}}(a;q^{-1})_{|I|}(a;q^{-1})_{n-|I|}\prod_{i\in I}x_i^{k}\\
\times\prod_{i\in I}\frac{1-ax_i^{-1}q^{1-n}}{1-x_i^{-1}q^{1-|I|}}\prod_{j\notin I}\frac{1-ax_j}{1-x_jq^{|I|}}\prod_{i\in I,\,j\notin I}\frac{x_i-qx_j}{x_i-x_j}\,,
\end{multline}
where the sum on the left-hand side is over partitions $\lambda$ satisfying $\lambda_1\leq k$ and $\ell(\lambda)\leq n$, $[n]:=\{1,\dots,n\}$, and $|I|$ is the cardinality of the set $I$.
\end{theorem}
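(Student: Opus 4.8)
The plan is to use Macdonald's partial fraction method, in the form employed by Stembridge \cite{stembridge} and refined in \cite{JZ, IJZ, Warnaar}. Since \eqref{qbinhlfini} is an identity between rational functions (the left-hand side being a polynomial, the right-hand side a sum of rational terms whose poles must cancel), all manipulations below are purely formal. First I would replace each $P_\la(x;q)$ on the left by its symmetrization over $S_n$,
$$
P_\la(x;q)=\frac{1}{v_\la(q)}\sum_{w\in S_n}w\Bigl(x_1^{\la_1}\cdots x_n^{\la_n}\prod_{i<j}\frac{x_i-qx_j}{x_i-x_j}\Bigr),\qquad v_\la(q)=\prod_{i\geq0}\frac{(q)_{m_i(\la)}}{(1-q)^{m_i(\la)}},
$$
with the convention $m_0(\la)=n-\ell(\la)$ accounting for the parts equal to $0$. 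Because the scalar $q^{n(\la)}(a;q^{-1})_{\ell(\la)}(a;q^{-1})_{n-m_k(\la)}/v_\la(q)$ is a partition invariant, I can interchange it with the permutation action and collapse the entire left-hand side to $\sum_{w\in S_n}w\bigl(W(x)\,G(x)\bigr)$, where $W(x)=\prod_{i<j}(x_i-qx_j)/(x_i-x_j)$ and $G(x)$ is the dominant-cone sum $\sum_{\la\subseteq(k^n)}(\mathrm{weight})\,x^\la$ taken over weakly decreasing sequences $k\geq\la_1\geq\cdots\geq\la_n\geq0$. The two factors $(a;q^{-1})_{\ell(\la)}$ and $(a;q^{-1})_{n-m_k(\la)}$ are present precisely so that the two \emph{extreme} part-values $0$ and $k$, which mark the lower and upper walls of the box $(k^n)$, are handled symmetrically, while the weight $q^{n(\la)}=q^{\sum_i(i-1)\la_i}$ supplies the staircase that makes $G$ amenable to summation.

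Next I would evaluate $G$. Organizing the chain $k\geq\la_1\geq\cdots\geq\la_n\geq0$ by the number of parts equal to $k$ and summing the truncated series with the help of the finite $q$-binomial theorem \eqref{qbin} from the smallest part outward, the upper truncation $\la_1\leq k$ forces each variable to contribute either an \emph{interior} term or a \emph{boundary} term carrying the factor $x_i^{k}$. Collecting the variables that take the boundary contribution into a subset $I\subseteq[n]$ is exactly what produces the $2^{n}$ summands of the right-hand side: the monomial $\prod_{i\in I}x_i^{k}$, the power $q^{k\binom{|I|}{2}}$, and the denominators $1-x_i^{-1}q^{1-|I|}$ (for $i\in I$) and $1-x_jq^{|I|}$ (for $j\notin I$) all emerge from resumming the interior parts against the two walls, while the numerators $1-ax_i^{-1}q^{1-n}$, $1-ax_j$ and the coefficients $(a;q^{-1})_{|I|}(a;q^{-1})_{n-|I|}$ record the two boundary $(a;q^{-1})$-weights carried through the computation.

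Finally, applying $\sum_{w\in S_n}w(\cdot)$ to the resulting expression and decomposing into partial fractions, the non-symmetric kernel $W(x)$ restricts, on the term indexed by $I$, to the cross factor $\prod_{i\in I,\,j\notin I}(x_i-qx_j)/(x_i-x_j)$; this collapse, together with the cancellation of the spurious poles at $x_i=x_j$ across different subsets $I$, is the signature of the method and yields \eqref{qbinhlfini}. The main obstacle is this middle step: carrying out the nested summation in the simultaneous presence of the non-symmetric kernel $W$, the normalization $1/v_\la(q)$ which couples parts of equal value through the factors $(q)_{m_i(\la)}$, and \emph{both} walls of the box at once, and then checking that the $a$-dependent factors and the $q$-powers reassemble into exactly the stated closed form after the partial fraction reduction. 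Once \eqref{qbinhlfini} is established, recovering \eqref{delaunayumoy} should be a routine matter of specializing the variables $x_i$ through \eqref{principale} and tuning the parameters so that the box width becomes $\ell$, after which the subset sum on the right telescopes to $(1-z^{\ell+1})/(1-z)$.
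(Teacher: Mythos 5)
There is a genuine gap: your proposal is a strategy sketch whose central computation is never carried out --- you yourself flag it as ``the main obstacle'' --- and the specific route you describe runs into a real technical wall. By symmetrizing over all of $S_n$ with the normalization $1/v_\la(q)$, your dominant-cone sum $G(x)$ carries the factors $(q)_{m_i(\la)}$ that couple parts of equal value, so it does \emph{not} split into a product of one-variable finite geometric series; the proposed ``nested summation with the finite $q$-binomial theorem \eqref{qbin} from the smallest part outward'' cannot be performed as described. This coupling is exactly what Macdonald's method is designed to circumvent, by using the coset/filtration form of $P_\la$ (in which $v_\la$ is absorbed) rather than the $1/v_\la$-normalized symmetrization. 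Moreover, your final claim --- that after applying $\sum_{w\in S_n}w(\cdot)$ the kernel $\prod_{i<j}(x_i-qx_j)/(x_i-x_j)$ ``restricts to the cross factor'' on the term indexed by $I$ --- is asserted without any mechanism, and nothing in your outline produces the factors $(1-ax_i^{-1}q^{1-n})/(1-x_i^{-1}q^{1-|I|})$ and $(1-ax_j)/(1-x_jq^{|I|})$ of \eqref{qbinhlfini}.

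The paper's proof rests on a device your outline never introduces: it makes the box width itself a generating parameter. One sets $S(u)=\sum_{\la_0,\la}q^{n(\la)}(a;q^{-1})_{\ell(\la)}(a;q^{-1})_{n-m_{\la_0}(\la)}P_\la(x;q)\,u^{\la_0}$ with $\la_0\geq\la_1$, so that the left-hand side of \eqref{qbinhlfini} is the coefficient of $u^k$. Writing $P_\la$ as a sum over filtrations of $x$ turns $S(u)$ into a finite sum of products of geometric series in $u$, hence a rational function of $u$ with simple poles at $u=1/\bigl(p(y)q^{\bi{|y|}{2}}\bigr)$, one for each subset $y\subseteq x$ (where $p(y)$ denotes the product of the variables in $y$). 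The residue at $y=\emptyset$ is evaluated by the \emph{known} infinite $q$-binomial theorem for Hall--Littlewood functions \eqref{qbinhl} --- an input your plan never uses --- and the residue at general $y$ is reduced to two copies of that evaluation by splitting each filtration through $y$ into filtrations of the rescaled alphabets $-q^{1-|y|}y$ and $q^{|y|}(x\setminus y)$ with parameters $aq^{|y|-n}$ and $aq^{-|y|}$; this splitting is what generates the cross factor and the $a$-dependent factors. Finally, expanding each $1/\bigl(1-p(y)q^{\bi{|y|}{2}}u\bigr)$ and extracting the coefficient of $u^k$ yields precisely $q^{k\bi{|I|}{2}}\prod_{i\in I}x_i^{k}$, explaining at once the subset indexing and the entire $k$-dependence of the right-hand side. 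Any fixed-$k$ computation along your lines would have to reproduce this pole-and-residue structure by hand, and that is exactly the step you left open.
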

We postpone the proof of this result to the next subsection, but we give the following usefull consequence.
\begin{corollary}
For positive integers $n$, $k$, and for any complex numbers $a$, $z$, we have
\begin{multline}\label{csqqbinhlfini}
\sum_{\la\subseteq(k^n)}z^{|\la|}q^{2n(\la)}\frac{(a;q^{-1})_{\ell(\la)}(a;q^{-1})_{n-m_k(\la)}(q)_n}{(q)_{n-\ell(\lambda)}b_\la(q)}=\sum_{r=0}^n(-1)^rz^{(k+1)r}q^{(2k+3)\bi{r}{2}}\\
\times(1-zq^{2r-1})\frac{(q^{n-r+1})_r(azq^r)_{n-r}(a;q^{-1})_r(aq^{1-n}/z;q^{-1})_{r}(a;q^{-1})_{n-r}}{(q)_r(zq^{r-1})_{n+1}}\,\cdot
\end{multline}
\end{corollary}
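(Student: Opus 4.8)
The plan is to deduce \eqref{csqqbinhlfini} from the finite identity \eqref{qbinhlfini} by specializing the alphabet $x=\{x_1,\dots,x_n\}$ to the geometric progression $x_i=zq^{i-1}$ (equivalently $x=\{z,zq,\dots,zq^{n-1}\}$). On the left-hand side of \eqref{qbinhlfini}, the principal specialization \eqref{principale} replaces $P_\la(z,zq,\dots,zq^{n-1};q)$ by $z^{|\la|}q^{n(\la)}(q)_n\big/\big((q)_{n-\ell(\la)}b_\la(q)\big)$, and together with the factor $q^{n(\la)}$ already present this produces exactly the left-hand side of \eqref{csqqbinhlfini}. Thus the whole content of the corollary lies in evaluating the right-hand side of \eqref{qbinhlfini} under the same specialization.

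First I would treat the interaction product, which is the source of the main simplification. For $x_i=zq^{i-1}$ one computes $\frac{x_i-qx_j}{x_i-x_j}=\frac{1-q^{j-i+1}}{1-q^{j-i}}$, whose numerator vanishes precisely when $j=i-1$. Hence the summand indexed by $I$ is zero unless every $i\in I$ with $i\geq 2$ also satisfies $i-1\in I$; equivalently, $I$ must be an initial segment $I=\{1,\dots,r\}$ with $r=|I|$. This is the crucial step: the subset sum over $I\subseteq[n]$ collapses to the single sum $\sum_{r=0}^n$ appearing on the right of \eqref{csqqbinhlfini}.

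For $I=\{1,\dots,r\}$ every remaining factor telescopes into a $q$-shifted factorial. The product $\prod_{i\in I}x_i^k$ combines with $q^{k\bi{|I|}{2}}$ to give $z^{rk}q^{2k\bi{r}{2}}$; the factors $(a;q^{-1})_{|I|}(a;q^{-1})_{n-|I|}$ become $(a;q^{-1})_r(a;q^{-1})_{n-r}$; the numerator $\prod_{i\in I}(1-ax_i^{-1}q^{1-n})$ becomes $(aq^{1-n}/z;q^{-1})_r$; the product over $j\notin I$ yields $(azq^r)_{n-r}\big/(zq^{2r};q)_{n-r}$; and telescoping $\prod_{i\in I,\,j\notin I}\frac{1-q^{j-i+1}}{1-q^{j-i}}$ first over $j$ and then over $i$ gives $(q^{n-r+1})_r\big/(q)_r$. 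The denominator $\prod_{i\in I}(1-x_i^{-1}q^{1-r})$ requires pulling a factor $-z^{-1}q^{2-r-i}$ out of each term, which turns it into $(-1)^rz^{-r}q^{-3\bi{r}{2}}(zq^{r-1};q)_r$; its reciprocal therefore supplies the sign $(-1)^r$ together with the power $z^rq^{3\bi{r}{2}}$ that upgrades $z^{rk}q^{2k\bi{r}{2}}$ to $(-1)^rz^{(k+1)r}q^{(2k+3)\bi{r}{2}}$. The final adjustment is a recombination of the two $z$-chains: one checks that $(zq^{r-1};q)_r(zq^{2r};q)_{n-r}$ differs from $(zq^{r-1})_{n+1}$ only by the single missing factor $1-zq^{2r-1}$, so their reciprocal equals $(1-zq^{2r-1})\big/(zq^{r-1})_{n+1}$; collecting everything then reproduces the summand of \eqref{csqqbinhlfini} term by term.

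I expect the only real obstacle to be the bookkeeping rather than any conceptual difficulty: the delicate point is tracking the exponents of $z$ and $q$ through the denominator $\prod_{i\in I}(1-x_i^{-1}q^{1-r})$, since this is where both the global sign $(-1)^r$ and the extra $q^{3\bi{r}{2}}$ originate, and then correctly merging the three separate strings of $q$-shifted factorials in $z$ into the single Pochhammer symbol $(zq^{r-1})_{n+1}$ (together with the leftover factor $1-zq^{2r-1}$). The simultaneous presence of positive and negative powers of $q$, and of base $q$ versus base $q^{-1}$ factorials, demands care, but no step is deep once the collapse to initial segments $I=\{1,\dots,r\}$ has been established.
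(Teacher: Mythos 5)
Your proposal is correct and follows essentially the same route as the paper's own proof: specialize $x_i=zq^{i-1}$ in \eqref{qbinhlfini}, use \eqref{principale} on the left-hand side, and note that the cross product $\prod_{i\in I,\,j\notin I}\frac{x_i-qx_j}{x_i-x_j}$ vanishes unless $I=\{1,\dots,r\}$, collapsing the subset sum to a single sum over $r$. The paper compresses the remaining work into the phrase ``a few manipulations on the right-hand side,'' and your detailed bookkeeping (the sign $(-1)^r$ and power $q^{3\bi{r}{2}}$ from $\prod_{i\in I}(1-x_i^{-1}q^{1-r})$, the telescoping to $(q^{n-r+1})_r/(q)_r$, and the merge of the $z$-chains into $(1-zq^{2r-1})/(zq^{r-1})_{n+1}$) carries out exactly those manipulations correctly.
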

\begin{proof}
We consider the  specialization $x_i=zq^{i-1}$ in \eqref{qbinhlfini}. In that case 
note that for any $I\subseteq[n]$,
$$\prod_{i\in I,j\notin I}\frac{1-qx_i^{-1}x_j}{1-x_i^{-1}x_j}\neq 
0\;\Leftrightarrow\;\exists 
r\in\{0,\cdots,n\}\,|\,I=\{1,\dots,r\}.
$$
Thus, by using \eqref{principale}, we get the desired result after a few manipulations on the right-hand side.
\end{proof}
Note that when $n\to\infty$, identity (\ref{csqqbinhlfini}) reduces to Stembridge's Theorem~3.4 (b) with $b=0$ 
in \cite{stembridge}. Moreover, when $a=0$, (\ref{csqqbinhlfini}) corresponds to Warnaar's identity (6.10) from \cite{Warnaar}, which is itself equivalent to Stembridge's Theorem~1.3~(b) from \cite{stembridge}. Now we can easily see that \eqref{csqqbinhlfini} contains \eqref{delaunayumoy} (but unfortunately not \eqref{umoyabeliens}) as a consequence. Indeed, on the left-hand side of \eqref{csqqbinhlfini}, replace $\la$ by $\mu$, $k$ by $\ell$, and take $a=zq^n$ to get
\begin{multline*}
\sum_{\mu_1\leq\ell}z^{|\mu|}q^{2n(\mu)}\frac{(zq^n;q^{-1})_{l(\mu)}(zq^n;q^{-1})_{n-\mu'_\ell}(q)_n}{(q)_{n-l(\mu)}b_\mu(q)}=\sum_{r=0}^n(-1)^rz^{(\ell+1)r}q^{(2\ell+3)\bi{r}{2}}\\
\times(1-zq^{2r-1})\frac{(q^{n-r+1})_r(z^2q^{r+n})_{n-r}(zq^n;q^{-1})_r(q;q^{-1})_{r}(zq^n;q^{-1})_{n-r}}{(q)_r(zq^{r-1})_{n+1}}\,\cdot
\end{multline*}
Note that on the right-hand side, $(q;q^{-1})_{r}=0$ unless $r=0$ or $1$, therefore letting finally $n\to\infty$ yields \eqref{delaunayumoy}.

\subsection{Proof of the finite form of the $q$-binomial theorem for Hall-Littlewood polynomials}

Consider the generating function $$
S(u)=\sum_{\la_0,\la}q^{n(\la)}c_{\la_0}(\la,a)P_\la(x;q)\,
u^{\la_0}, $$ where the sum is over all partitions
$\la=(\la_1,\ldots, \la_n)$ and the integers
 $\la_0\geq \la_1$, and $$
c_k(\la,a)=(a;q^{-1})_{l(\la)}(a;q^{-1})_{n-m_k(\la)}.
$$
Assume $\la=(\mu_1^{r_1}\, \mu_2^{r_2}\, \ldots \mu_k^{r_k})$,
 where
$\mu_1>\mu_2>\cdots >\mu_k\geq 0$ and $(r_1, \ldots, r_k)$ is a
 composition of $n$. Let $S_n^\la$ be the set of permutations of $S_n$ which fix $\la$.
 Each $w\in S_n/S_n^\la$ corresponds to  a surjective mapping $f:
x\longrightarrow \{1,2,\ldots, k\}$ such that $|f^{-1}(i)|=r_i$.
For any subset $y$ of $x=\{x_1,\dots,x_n\}$, let $p(y)$ denote  the product of the
elements of $y$ (in particular, $p(\emptyset)=1$). We can rewrite
Hall-Littlewood functions as follows 
$$ 
P_\la(x;q)=\sum_{f}
p(f^{-1}(1))^{\mu_1}\cdots p(f^{-1}(k))^{\mu_k}
\prod_{f(x_i)<f(x_j)}{x_i-qx_j\over x_i-x_j}, 
$$ 
summed over all
surjective mappings $f: x\longrightarrow \{1,2,\ldots, k\}$ such
that $|f^{-1}(i)|=r_i$. Furthermore, each such $f$ determines a
\emph{filtration}  of $x$
\begin{equation}\label{filtration}
 {\mathcal F}: \quad
\emptyset=F_0\subsetneq F_1\subsetneq \cdots \subsetneq F_k=x,
\end{equation}
according to the rule $x_i\in F_l\Longleftrightarrow f(x_i)\leq l$
for $1\leq l\leq k$. Conversely, such a filtration ${\mathcal
F}=(F_0,\, F_1, \ldots, F_k)$ determines a surjection $f:
x\longrightarrow \{1,2,\ldots, k\}$ uniquely. Thus we can write
\begin{equation}\label{filtre}
P_\la(x;q)=\sum_{\mathcal F}\pi_{\mathcal F}\prod_{1\leq i\leq
k}p(F_i\setminus F_{i-1})^{\mu_i},
\end{equation}
summed  over all the filtrations $\mathcal F$  such that
 $|F_i|=r_1+r_2+\cdots +r_i$ for $1\leq i\leq k$, and
$$ \pi_{\mathcal F}:=\prod_{f(x_i)<f(x_j)}{x_i-qx_j\over x_i-x_j}, $$
where $f$ is the function defined by $\mathcal F$. Now let $\nu_i=\mu_i-\mu_{i+1}$ if $1\leq i\leq k-1$ and
$\nu_k=\mu_k$, thus $\nu_i>0$ if $i<k$ and $\nu_k\geq 0$. 
Furthermore, let  $\mu_0=\la_0$ and
$\nu_0=\mu_0-\mu_1$ in the definition of $S(u)$, so that
$\nu_0\geq 0$ and $\mu_0=\nu_0+\nu_1+\cdots +\nu_k$. Since the  lengthes of the columns of $\la$ are
 $|F_j|=r_1+\cdots +r_j$
with multiplicities $\nu_j$ for $1\leq j\leq k$, we have
$$n(\la)=\sum_{i=1}^{k}\nu_i\bi{|F_i|}{2},$$
$$\ell(\la)=n\chi(\nu_k\neq 0)+|F_{k-1}|\chi(\nu_k=0),$$
$$m_k(\la)=\chi(\nu_0\neq 0)+|F_1|\chi(\nu_0=0),$$
 and 
\begin{eqnarray*}
&&c_{\la_0}(\la,a)=(a;q^{-1})_n\left(\chi(\nu_k\neq 
0)+\frac{(a;q^{-1})_{|F_{k-1}|}}{(a;q^{-1})_n}\chi(\nu_k=0)\right)\\
&&\hskip 3 cm\times\left(\chi(\nu_0\neq 
0)+\frac{(a;q^{-1})_{n-|F_1|}}{(a;q^{-1})_n}\chi(\nu_0=0)\right).
\end{eqnarray*}
For any filtration $\mathcal F$ of $x$ define
\begin{eqnarray*}
&&\hskip -1cm{\mathcal A}_{\mathcal F}(x,a,u):=(a;q^{-1})_n\\
&&\hskip -1cm\times\prod_{j=0}^{k}\left({q^{\bi{|F_{j}|}{2}}p(F_j)u \over 
1-q^{\bi{|F_{j}|}{2}}p(F_j)u}+\frac{(a;q^{-1})_{|F_{k-1}|}}{(a;q^{-1})_n}\chi(F_j=x)+\frac{(a;q^{-1})_{n-|F_1|}}{(a;q^{-1})_n}\chi(F_j=\emptyset)\right).
\end{eqnarray*}
Letting $F(x)$ be the set of filtrations of $x$, we get  $S(u)=\sum_{{\mathcal F}\in F(x)}\pi_{\mathcal F}{\mathcal A}_{\mathcal F}(x,a,u).$
 Hence  $S(u)$ is a rational function of $u$ with simple
 poles at  $1/p(y)q^{\bi{|y|}{2}}$, where $y$ is a subset of $ x$. We are
now proceeding to compute  the corresponding  residue $c(y)$
 at each pole $u=1/p(y)q^{\bi{|y|}{2}}$. Let us  start with $c(\emptyset)$. Writing $\la_0=\la_1+k$ with
$k\geq 0$, we see that
\begin{eqnarray*}
S(u)&=&\sum_\la q^{n(\la)}(a;q^{-1})_{l(\la)}P_\la(x;q)u^{\la_1}\sum_{k\geq 
0}u^k\frac{(a;q^{-1})_{n-m_{\la_1+k}}}{(a;q^{-1})_{n}}\\
&=&\sum_\la q^{n(\la)}(a;q^{-1})_{l(\la)}P_\la(x;q)u^{\la_1}\left({u\over 
1-u}+\frac{(a;q^{-1})_{n-m_{\la_1}}}{(a;q^{-1})_{n}}\right).
\end{eqnarray*}
It follows from \eqref{qbinhl} that 
$$
c(\emptyset)=\left[S(u)(1-u)\right]_{u=1}=\prod_{i\geq1}\frac{1-ax_i}{1-x_i}=:\phi(x;a),
$$
which gives the identity
\begin{equation}\label{aempty} 
\sum_{{\mathcal F}\in F(x)}\pi_{\mathcal F}{\mathcal A}_{\mathcal 
F}(x,a,u)(1-u)|_{u=1}=\phi(x;a). \end{equation}
For the computations of other  residues, we need some more
notations. For any $y\subseteq x$ and any $\alpha\in\mathbb{C}$, let $q^\alpha y=\{q^\alpha x_i, x_i\in y\}$, $y'=x\setminus y$, and
$-y=\{x_i^{-1}:x_i\in y\}$. Let $y\subseteq x$, then
\begin{equation}\label{aresidu}
c(y)=\left[\sum_{\mathcal F}\pi_{\mathcal F}{\mathcal A}_{\mathcal
F}(x,a,u)(1-p(y)q^{\bi{|y|}{2}}u)\right]_{u=p(-y)q^{-\bi{|y|}{2}}}.
\end{equation}
If $y\notin\mathcal F$, the corresponding summand is  equal to  $0$.
Thus we need only to consider the following filtrations ${\mathcal F}$
 $$ \emptyset=F_0\subsetneq \cdots \subsetneq F_p=y\subsetneq
\cdots \subsetneq  F_k=x\qquad 1\leq p\leq k. 
$$ 
We may then split
$\mathcal F$ into two filtrations ${\mathcal F}_1$ and ${\mathcal F}_2$
\begin{eqnarray*}
{\mathcal F}_1&:& \emptyset \subsetneq  -q^{-|y|+1}(y\setminus
F_{p-1})\subsetneq \cdots \subsetneq -q^{-|y|+1}(y\setminus F_1)\subsetneq
-q^{-|y|+1}y,\\ {\mathcal F}_2&:& \emptyset \subsetneq q^{|y|}(F_{p+1}\setminus
y)\subsetneq   \cdots \subsetneq q^{|y|}(F_{k-1}\setminus y)\subsetneq  
q^{|y|}y'.
\end{eqnarray*}
As 
$$\displaystyle(|y|-1)(|y|-|F_j|)+\bi{|F_{j}|}{2}-\bi{|y|}{2}=\bi{|y|-|F_j|}{2}\;\mbox{for}\;|y|\geq 
|F_j|,$$
$$\displaystyle 
|y|(|F_j|-|y|)+\bi{|F_{j}|}{2}-\bi{|y|}{2}=\bi{|F_j|-|y|}{2}\;\mbox{for}\;|y|\leq 
|F_j|,$$
and $$\pi_{\mathcal F}(x)=\pi_{{\mathcal F}_1}(-q^{-|y|+1}y)\pi_{{\mathcal 
F}_2}(q^{|y|}y')\prod_{x_i\in y, x_j\in 
y'}\frac{1-qx_i^{-1}x_j}{1-x_i^{-1}x_j},$$ 
we can write, setting $v=p(y)q^{\bi{|y|}{2}}u$,  and using 
(\ref{aresidu}), 
\begin{eqnarray*}
c(y)&=&\left[\sum_{{\mathcal F}\in F(x)}\pi_{\mathcal F}{\mathcal A}_{\mathcal 
F}(x,a,u)\left(1-p(y)q^{\bi{|y|}{2}}u\right)\right]_{u=p(-y)q^{-\bi{|y|}{2}}}\\
&=&\frac{(a;q^{-1})_{n}}{(aq^{|y|-n};q^{-1})_{|y|}(aq^{-|y|};q^{-1})_{n-|y|}}\prod_{x_i\in 
y, x_j\in y'}\frac{1-qx_i^{-1}x_j}{1-x_i^{-1}x_j}\\
&&\hskip 2cm\times \left[\sum_{{\mathcal F}_1}\pi_{{\mathcal F}_1}{\mathcal A}_{{\mathcal 
F}_1}\left(-q^{-|y|+1}y,aq^{|y|-n},v\right)(1-v)\right]_{v=1}\\
&&\hskip 2cm\times \left[\sum_{{\mathcal F}_2}\pi_{{\mathcal F}_2}{\mathcal A}_{{\mathcal 
F}_2}\left(q^{|y|}y',aq^{-|y|},v\right)(1-v)\right]_{v=1}.
\end{eqnarray*}
Then, using (\ref{aempty}), we get
\begin{eqnarray*}
c(y)&=&\phi\left(-q^{1-|y|}y;aq^{|y|-n}\right)\;\phi\left(q^{|y|}y';aq^{-|y|}\right)\\
&&\hskip 
1cm\times\frac{(a;q^{-1})_{|y|}(a;q^{-1})_{n-|y|}}{(a;q^{-1})_{n}}\prod_{x_i\in 
y, x_j\in y'}\frac{1-qx_i^{-1}x_j}{1-x_i^{-1}x_j}.
\end{eqnarray*}
Encoding each subset $y$ of $x$ by the corresponding subset $I\subseteq[n]$ and extracting the coefficient $u^k$ 
in
$$ S(u)=\sum_{y\subseteq x} {c(y)\over 1-q^{\bi{|y|}{2}}p(y)u},$$
we get the result.

\section{Acknowledgements}
We thank B.~Poonen, M.~Schlosser and S.~O.~Warnaar  for helpful discussions and suggestions during the preparation of this manuscript.


%
%

\bibliographystyle{alpha}
\bibliography{pl_ranks_full}

\begin{thebibliography}{LRW09}

\bibitem[ASW99]{ASW}
George~E. Andrews, Anne Schilling, and S.~Ole Warnaar.
\newblock An ${A}_2$ {B}ailey {L}emma and {R}ogers-{R}amanujan-type
  {I}dentities.
\newblock {\em J. Amer. Math. Soc.}, 12:677--702, 1999.

\bibitem[Bha05]{bhargava}
Manjul Bhargava.
\newblock The density of discriminants of quartic rings and fields.
\newblock {\em Ann. of Math. (2)}, 162(2):1031--1063, 2005.

\bibitem[BS10a]{bhargava_shankar_2}
Manjul Bhargava and Arul Shankar.
\newblock Binary quartic forms having bounded invariants, and the boundedness
  of the average rank of elliptic curves.
\newblock {\em Preprint available on arXiv:1006.1002}, 2010.

\bibitem[BS10b]{bhargava_shankar_1}
Manjul Bhargava and Arul Shankar.
\newblock Ternary cubic forms having bounded invariants, and the existence of a
  positive proportion of elliptic curves having rank 0.
\newblock {\em Preprint available on arXiv:1007.0052}, 2010.

\bibitem[But94]{Butler}
Lynne~M. Butler.
\newblock Subgroup lattices and symmetric functions.
\newblock {\em Mem. Amer. Math. Soc.}, 112:no. 539, vi+160 pp., 1994.

\bibitem[CL84]{cohen-lenstra}
H.~Cohen and H.~W. Lenstra, Jr.
\newblock Heuristics on class groups.
\newblock In {\em Number theory ({N}ew {Y}ork, 1982)}, volume 1052 of {\em
  Lecture Notes in Math.}, pages 26--36. Springer, Berlin, 1984.

\bibitem[CM90]{cohen-martinet}
Henri Cohen and Jacques Martinet.
\newblock \'{E}tude heuristique des groupes de classes des corps de nombres.
\newblock {\em J. Reine Angew. Math.}, 404:39--76, 1990.

\bibitem[Coh93]{cohen0}
Henri Cohen.
\newblock {\em A course in computational algebraic number theory}, volume 138
  of {\em Graduate Texts in Mathematics}.
\newblock Springer-Verlag, Berlin, 1993.

\bibitem[Del48]{Delsarte}
S.~Delsarte.
\newblock Fonctions de m\"obius sur les groupes abeliens finis.
\newblock {\em Ann. Math.}, 49:600--609, 1948.

\bibitem[Del01]{delaunay1}
Christophe Delaunay.
\newblock Heuristics on {T}ate-{S}hafarevitch groups of elliptic curves defined
  over {$\Bbb Q$}.
\newblock {\em Experiment. Math.}, 10(2):191--196, 2001.

\bibitem[Del07]{delaunay2}
Christophe Delaunay.
\newblock Heuristics on class groups and on {T}ate-{S}hafarevich groups: the
  magic of the {C}ohen-{L}enstra heuristics.
\newblock In {\em Ranks of elliptic curves and random matrix theory}, volume
  341 of {\em London Math. Soc. Lecture Note Ser.}, pages 323--340. Cambridge
  Univ. Press, Cambridge, 2007.

\bibitem[Del11]{delaunay3}
Christophe Delaunay.
\newblock Averages of groups involving $p^l$-rank and combinatorial identities.
\newblock {\em Journal of Number Theory}, 131:536--551, 2011.

\bibitem[DH71]{davenport_heilbronn}
H.~Davenport and H.~Heilbronn.
\newblock On the density of discriminants of cubic fields. {II}.
\newblock {\em Proc. Roy. Soc. London Ser. A}, 322(1551):405--420, 1971.

\bibitem[FK06]{kluners_fouvry2}
{\'E}tienne Fouvry and J{\"u}rgen Kl{\"u}ners.
\newblock Cohen-{L}enstra heuristics of quadratic number fields.
\newblock In {\em Algorithmic number theory}, volume 4076 of {\em Lecture Notes
  in Comput. Sci.}, pages 40--55. Springer, Berlin, 2006.

\bibitem[FK07]{kluners_fouvry}
{\'E}tienne Fouvry and J{\"u}rgen Kl{\"u}ners.
\newblock On the $4$-rank of class groups of quadratic number fields.
\newblock {\em Invent. Math.}, 167(3):455--513, 2007.

\bibitem[GR04]{GasperRahman}
George Gasper and Mizan Rahman.
\newblock {\em Basic Hypergeometric Series, second edition}, volume~96 of {\em
  Encyclopedia of mathematics and its applications}.
\newblock Cambridge Univ. Press, Cambridge, 2004.

\bibitem[Hal38]{hall}
Philip Hall.
\newblock A partition formula connected with {A}belian groups.
\newblock {\em Comment. Math. Helv.}, 11(1):126--129, 1938.

\bibitem[HB93]{heath-brown2}
D.~R. Heath-Brown.
\newblock The size of {S}elmer groups for the congruent number problem.
\newblock {\em Invent. Math.}, 111(1):171--195, 1993.

\bibitem[HB94]{heath-brown1}
D.~R. Heath-Brown.
\newblock The size of {S}elmer groups for the congruent number problem. {II}.
\newblock {\em Invent. Math.}, 118(2):331--370, 1994.
\newblock With an appendix by P. Monsky.

\bibitem[IJZ06]{IJZ}
Masao Ishikawa, Fr{\'e}d{\'e}ric Jouhet, and Jiang Zeng.
\newblock A generalization of {K}awanaka's identity for {H}all-{L}ittlewood
  polynomials and applications.
\newblock {\em J. Algebraic Combin.}, 23(4):395--412, 2006.

\bibitem[JZ05]{JZ}
Fr{\'e}d{\'e}ric Jouhet and Jiang Zeng.
\newblock New identities for {H}all-{L}ittlewood polynomials and applications.
\newblock {\em Ramanujan J.}, 10(1):89--112, 2005.

\bibitem[Kan11]{kane}
Daniel~M. Kane.
\newblock On the {R}anks of the $2$-{S}elmer {G}roups of {T}wists of a {G}iven
  {E}lliptic {C}urve.
\newblock {\em Preprint available on arXiv: 1009.1365}, 2011.

\bibitem[Kow07]{kowalski}
E.~Kowalski.
\newblock Elliptic curves, rank in families and random matrices.
\newblock In {\em Ranks of elliptic curves and random matrix theory}, volume
  341 of {\em London Math. Soc. Lecture Note Ser.}, pages 7--52. Cambridge
  Univ. Press, Cambridge, 2007.

\bibitem[Las07]{Lascoux}
Alain Lascoux.
\newblock Adding $1$ to the argument of a {H}all-{L}ittlewood polynomial.
\newblock {\em S\'em. Lothar. Combin.}, 54(Art. B54n):17 pp, 2005/07.

\bibitem[LRW09]{LRW}
Alain Lascoux, Eric Rains, and S.~Ole Warnaar.
\newblock Nonsymmetric interpolation {M}acdonald polynomials and $gl_n$ basic
  hypergeometric series.
\newblock {\em Transf. Groups}, 14:613--647, 2009.

\bibitem[LS06]{LassalleSchlosser}
Michel Lassalle and Michael Schlosser.
\newblock Inversion of the {P}ieri formula for {M}acdonald polynomials.
\newblock {\em Adv. Math.}, 202(2):289--325, 2006.

\bibitem[Mac95]{Macdonald}
Ian~G. Macdonald.
\newblock {\em Symmetric Functions and Hall Polynomials, second edition}.
\newblock Oxford University Press, New-York, 1995.

\bibitem[Oko97]{Ok}
Andrei Okounkov.
\newblock Binomial formula for {M}acdonald polynomials and applications.
\newblock {\em Math. Res. Lett.}, 4:533--553, 1997.

\bibitem[PR12]{poonen-rains}
Bjorn Poonen and Eric Rains.
\newblock Random maximal isotropic subspaces and {S}elmer groups.
\newblock {\em J. Amer. Math. Soc.}, 25(1):245--269, 2012.

\bibitem[Sch07]{Schlosser}
Michael Schlosser.
\newblock Macdonald polynomials and multivariable basic hypergeometric series.
\newblock {\em Vadim Kuznetsov Memorial Issue on Integrable Systems and Related
  Topics, SIGMA}, 3(056):30 pp., 2007.

\bibitem[SD08]{swinnerton-dyer}
Peter Swinnerton-Dyer.
\newblock The effect of twisting on the $2$-{S}elmer group.
\newblock {\em Math. Proc. Cambridge Philos. Soc.}, 145(3):513--526, 2008.

\bibitem[Ste90]{stembridge}
John~R. Stembridge.
\newblock Hall-{L}ittlewood functions, plane partitions, and the
  {R}ogers-{R}amanujan identities.
\newblock {\em Trans. Amer. Math. Soc.}, 319(2):469--498, 1990.

\bibitem[War06]{Warnaar}
S.~Ole Warnaar.
\newblock Hall-{L}ittlewood functions and the ${A}_2$ {R}ogers-{R}amanujan
  identities.
\newblock {\em Adv. Math.}, 200:403--434, 2006.

\bibitem[WZ12]{WarnaarZudilin}
S.~Ole Warnaar and Wadim Zudilin.
\newblock Dedekin's eta function and {R}ogers-{R}amanujan identities.
\newblock {\em Bull. London Math. Soc.}, 44:1--11, 2012.

\end{thebibliography}

\end{document}